\documentclass[10pt]{amsart}

\usepackage{amsmath,amssymb,amsthm}
\usepackage{mathrsfs}
\usepackage{geometry}
\usepackage{enumitem}
\usepackage{mathtools}
\usepackage{stmaryrd}

\usepackage[dvipsnames]{xcolor}
\usepackage[
unicode=true,
colorlinks=true,
linktoc=all,
linkcolor=MidnightBlue,
citecolor=ForestGreen,
urlcolor=BrickRed,
filecolor=BrickRed,
bookmarksnumbered=true,
bookmarksopen=true
]{hyperref}

\hypersetup{
	pdftitle={Intrinsic Brown--York Type Mass at Infinity in Four Dimensions},
	pdfauthor={Jiangcheng You},
	pdfsubject={Differential Geometry, Geometric Analysis},
	pdfkeywords={
		Brown--York type mass,
		ADM mass,
		asymptotically flat manifolds,
		large hypersurfaces,
		quasilocal mass
	}
}

\newcommand{\R}{\mathbb{R}}

\newcommand{\Sph}{\mathbb{S}}
\newcommand{\gE}{g_{\mathrm{E}}}

\newcommand{\tr}{\operatorname{tr}}

\newcommand{\Ric}{\operatorname{Ric}}
\newcommand{\Rm}{\operatorname{Rm}}

\newcommand{\Weyl}{\operatorname{W}}

\newcommand{\Vol}{\operatorname{Vol}}
\newcommand{\Id}{\operatorname{Id}}

\newcommand{\Scal}{\operatorname{R}}
\newcommand{\Lie}{\mathcal{L}}

\newcommand{\diam}{\mathrm{diam}}
\newcommand{\End}{\mathrm{End}}

\newcommand{\ADM}{\mathrm{ADM}}

\newcommand{\SU}{\mathrm{SU}}

\newcommand{\diag}{\mathrm{diag}}
\newcommand{\BY}{\mathrm{BY}}

\newcommand{\mass}{\mathrm{mass}}

\theoremstyle{plain}
\newtheorem{theorem}{Theorem}[section]
\newtheorem{proposition}[theorem]{Proposition}
\newtheorem{lemma}[theorem]{Lemma}
\newtheorem{corollary}[theorem]{Corollary}

\newtheorem{definition}[theorem]{Definition}
\newtheorem{assumption}[theorem]{Assumption}

\theoremstyle{remark}
\newtheorem{remark}[theorem]{Remark}

\title{Intrinsic Brown--York Type Mass at Infinity in Four Dimensions}

\author{Jiangcheng You}
\address{
	School of Mathematical Sciences,
	University of Science and Technology of China,
	Hefei 230026, China
}
\email{yjcmp@mail.ustc.edu.cn}

\date{July 2, 2026}

\subjclass[2020]{53C21, 53C24, 83C99}

\keywords{
	Brown--York type mass,
	ADM mass,
	asymptotically flat manifolds,
	large hypersurfaces,
	quasilocal mass
}

\begin{document}

	\begin{abstract}
		We study a Brown--York type mass for closed hypersurfaces in
		four-dimensional asymptotically flat manifolds. The reference mean
		curvature is defined intrinsically as the trace of the positive
		solution of the contracted Gauss equation. For large uniformly convex
		hypersurfaces with controlled scale, we derive an expansion consisting
		of a boundary term converging to the ADM mass and a shape-dependent
		correction. For the four-dimensional analogue of the nearly round
		surfaces of Shi--Wang--Wu, this correction vanishes under a natural
		decay compatibility condition.
	\end{abstract}

	\maketitle

	\section{Introduction}
	
	Let $(M^4,g)$ be a smooth asymptotically flat
	Riemannian manifold. Following the standard formulations in
	\cite{Bartnik86,LeeParker,McCormick24}, we say that an end of $(M^4,g)$ is
	\emph{asymptotically flat of order $q$} if there exist a compact set
	$K\subset M$, a radius $R>0$, and a diffeomorphism
	\begin{align*}
		\Phi:M\setminus K\longrightarrow \R^4\setminus B_R
	\end{align*}
	such that, in the corresponding coordinates,
	\begin{align*}
		\partial^\beta(g_{ij}-\delta_{ij})
		=
		\mathcal O\bigl(|x|^{-q-|\beta|}\bigr),
		\qquad
		|\beta|\leq 2
	\end{align*}
	for some $q>1$. We fix such an asymptotically flat coordinate
	chart and identify the end with $\R^4\setminus B_R$. We also assume throughout that the scalar curvature of $M$ is $L^1$-integrable.
	
	The ADM mass, introduced by Arnowitt, Deser, and Misner \cite{ADM}, is
	defined in dimension four by
	\begin{align}\label{eq:four-dimensional-ADM-mass}
		m_{\ADM}(g)
		:=
		\frac{1}{6\omega_3}
		\lim_{r\to\infty}
		\int_{\Sph_r}
		\left(
		\partial_jg_{ij}
		-
		\partial_ig_{jj}
		\right)
		(\nu_E)^i\,dS_E,
	\end{align}
	where $\Sph_r=\{x\in\R^4:|x|=r\}$, $\nu_E$ and $dS_E$ denote the
	Euclidean outward unit normal and hypersurface measure, respectively, and
	$\omega_3=|\Sph^3|$. Under the assumptions above, this limit is finite and
	independent of the choice of asymptotically flat coordinates
	\cite{Bartnik86,McCormick24}.
	
	We next recall the classical Brown--York mass in dimension three. Let $(N^3,g)$ be a compact Riemannian
	$3$--manifold with boundary, and let $\Sigma$ be a connected boundary
	component with induced metric $\sigma=g|_\Sigma$. Suppose that $\Sigma$ is
	a topological $2$--sphere and that $\sigma$ has positive Gauss curvature.
	By Nirenberg's solution of the Weyl problem \cite{Nirenberg},
	$(\Sigma,\sigma)$ admits an isometric embedding into $\mathbb R^3$
	as a strictly convex closed surface. By the rigidity theorem for
	convex surfaces \cite{CohnVossen}, this embedding is unique up to
	rigid motions. Let $H_0$ be
	the mean curvature of this reference surface in $\mathbb R^3$, and let
	$H$ be the physics mean curvature of $\Sigma$ in $(N,g)$, both computed with
	respect to the outward unit normal. The Brown--York mass is defined by
	\begin{align}\label{eq:classical-BY}
		m_{\mathrm{BY}}(\Sigma)
		:=
		\frac{1}{8\pi}
		\int_{\Sigma}(H_0-H)\,d\mu_\sigma .
	\end{align}
	It was introduced by Brown and York through the Hamilton--Jacobi analysis
	of the gravitational action, and in the time-symmetric case it reduces to
	this Riemannian expression \cite{BrownYork}.
	
	Unfortunately, in higher dimensions the direct classical definition
	\eqref{eq:classical-BY} cannot be applied without modification, since the
	required flat reference embedding need not exist.  This issue already occurs
	for certain scalar-flat asymptotically flat ends arising from the
	trivial-group case of the refined ALE asymptotics in \cite{YouALE26}.  For
	such ends, the asymptotic geometry carries a Weyl tensor
	$\Weyl_\infty$ at infinity.  Using the local isometric-embedding criterion
	of Li--Weinstein \cite{LiWeinstein}, we show that, when
	$\Weyl_\infty\neq0$, the metrics induced on the large coordinate spheres
	$\Sph_r:=\{x\in\R^4:|x|=r\}$ are not even locally isometrically embeddable
	into $\R^4$ for all sufficiently large $r$.  See
	Subsection~\ref{subsec:local-embed} for details.
	
	We therefore replace the embedding-based reference term by an intrinsic one.
	The guiding observation is that, if a hypersurface isometrically embeds into a
	flat ambient manifold with shape operator $A$, then its induced metric $\sigma$
	and $A$ satisfy the contracted Gauss equation
	\begin{align*}
		\Ric_\sigma^\sharp
		=
		(\tr_\sigma A)A-A^2 .
	\end{align*}
	Thus the reference mean curvature can be obtained from the intrinsic
	boundary metric by solving this equation for a reference shape operator, rather
	than by first constructing a flat reference embedding. In the positive sectional
	curvature case this choice is canonical: setting
	$B_\sigma=\frac12\Scal_\sigma\Id-\Ric_\sigma^\sharp$, where $\Scal$ is the scalar curvature, the positive solution is
	\begin{align*}
		A_0^+[\sigma]
		=
		\sqrt{\det B_\sigma}\,B_\sigma^{-1},
	\end{align*}
	and we define $H_0^+[\sigma]:=\tr A_0^+[\sigma]$. This reference term is the one
	used in our Brown--York type mass. The viewpoint is consistent with the original
	Brown--York formulation \cite{BrownYork}, where the subtraction term fixing the
	zero level of quasilocal energy is determined by the prescribed boundary
	geometry. It is also close in spirit to Mann--Marolf's holographic
	renormalization of asymptotically flat spacetimes \cite{MannMarolf}.
	
	This gives the Brown--York type mass a new explanation. For a smooth closed
	hypersurface $\Sigma=\partial D\subset (N^4,g)$ whose induced metric
	$\sigma$ has positive sectional curvature, we take
	$H_0^+[\sigma]$ as the reference mean curvature and compare
	it with the physical mean curvature $H_g(\Sigma)$, computed with respect to
	the outward unit normal of $D$. We then set
	\begin{align}\label{eq:Brown-York-type-mass-introduction}
		m_{\BY}(\Sigma)
		:=
		\frac{1}{3\omega_3}
		\int_\Sigma
		\bigl(
		H_0^+[\sigma]-H_g(\Sigma)
		\bigr)\,d\mu_\sigma,
		\quad
		\omega_3=|\Sph^3|=2\pi^2 .
	\end{align}
	This agrees with the usual Brown--York reference term whenever a
	flat reference hypersurface exists. Its advantage is that it remains meaningful even when such an
	embedding is not available.
	
	\subsection{Main results and related works}
	
	After introducing the Brown--York type mass \eqref{eq:Brown-York-type-mass-introduction} in the four-dimensional AF setting, we address the
	following question: \textit{among the families of large closed hypersurfaces going to infinity, which ones recover the ADM mass through the Brown--York type mass?}
	
	This question is first of all a consistency test for the definition. A
	Brown--York type mass is attached to a finite boundary, but along large
	boundaries tending to an asymptotic end it should recover the corresponding
	conserved quantity at infinity. In the asymptotically flat Riemannian setting
	this quantity is the ADM mass; for the classical Brown--York mass, the
	large-sphere limit along coordinate spheres was proved by Fan--Shi--Tam
	\cite{FanShiTam}. This is consistent with the Hamiltonian viewpoint, where the
	conserved quantity at spatial infinity is the ADM energy-momentum
	\cite{ADM,Szabados}, and also with the spatial-infinity limit in the
	Wang--Yau theory \cite{WangYau,WangYau09}. In our geometric setting, the corresponding
	total quantity is the ADM mass defined as \eqref{eq:four-dimensional-ADM-mass}. Thus, after rewriting $m_{\BY}$ by \eqref{eq:Brown-York-type-mass-introduction}, one must check
	whether it recovers this mass along suitable large hypersurfaces.
	
	The question also has a geometric aspect: the way in which the boundary
	tends to infinity can affect the limiting behavior. The ADM limit is not merely
	a statement about coordinate spheres. In asymptotically flat $3$--manifolds,
	Shi--Wang--Wu proved convergence of the Brown--York mass and the Hawking mass
	along nearly round surfaces \cite{ShiWangWu}, while Fan--Kwong proved
	convergence of the Brown--York mass along certain non-round convex revolution
	surfaces in asymptotically Schwarzschild manifolds
	\cite{FanKwong11,FanKwong13}. Related constructions, including inverse mean
	curvature flow, the Huisken--Yau CMC foliation, isoperimetric mass, and
	Bartnik's quasilocal mass, also show that large boundaries or large regions can
	encode information about the total mass
	\cite{HuiskenIlmanen,HuiskenYau,JaureguiLee,Bartnik89,McCormick24}.
	
	We now formulate our large-boundary result in this direction. We work in
	a fixed AF coordinate system, and consider families of large hypersurfaces, denoted by $\{\Sigma_a\}$, whose rescaled
	geometry is uniformly controlled; see Assumption~\ref{ass:large-hypersurfaces}. The point is to go beyond coordinate spheres while retaining enough control to compare the Brown--York type mass
	with the asymptotic data of the end.
	
	For such a family, the Brown--York type mass has an asymptotic expansion whose main term converges to the ADM mass. The possible obstruction to convergence is a shape-dependent correction obtained by pairing the Euclidean geometry of $\Sigma_a$ with the asymptotic metric perturbation $h=g-g_E$. Thus, the Brown--York type mass recovers the ADM mass along the family precisely when this obstruction converges to zero. Our first main result can be stated as
	
	\begin{theorem}[Large-boundary expansion of the Brown--York type mass]
		\label{thm:BY-AF-large-boundary}
		Let $(M^4,g)$ have an asymptotically flat end of order $q>1$, and assume that
		$R_g\in L^1(M,dV_g)$. Fix an asymptotically flat coordinate chart, and let
		$\{\Sigma_a\}$ be a family of closed hypersurfaces satisfying
		Assumption~\ref{ass:large-hypersurfaces}. Set $h:=g-\gE$. Then, for all sufficiently large $a$, the Brown--York type mass
		$m_{\BY}(\Sigma_a)$ is well-defined and satisfies
		\begin{align}\label{eq:BY-AF-main-expansion}
			m_{\BY}(\Sigma_a)
			=
			\frac{1}{6\omega_3}
			\int_{\Sigma_a}
			\bigl(
			\partial_jh_{ij}
			-
			\partial_ih_{jj}
			\bigr)(\nu_E)^i\,d\mu_E
			+
			\frac{1}{3\omega_3}\int_{\Sigma_a}
			\mathfrak D_a^{\alpha\beta}
			h_{\alpha\beta}\,d\mu_E
			+
			\mathcal O(\rho_a^{2-2q}).
		\end{align}
		Here, $\mathfrak D_a$, defined as \eqref{eq:BY-def-EP} and \eqref{eq:BY-def-D}, depends only on the Euclidean geometry of $\Sigma_a$ and satisfies
		\begin{align*}
			\nabla_\alpha\mathfrak D_a^{\alpha\beta}=0,
			\qquad
			(K_a^E)_{\alpha\beta}\mathfrak D_a^{\alpha\beta}=0.
		\end{align*}
		Consequently,
		\begin{align}\label{eq:BY-AF-main-criterion}
			\lim_{a\to\infty}m_{\BY}(\Sigma_a)
			=
			m_{\ADM}(g)
			\quad\Longleftrightarrow\quad
			\int_{\Sigma_a}
			\mathfrak D_a^{\alpha\beta}
			h_{\alpha\beta}\,d\mu_E
			\longrightarrow 0.
		\end{align}
	\end{theorem}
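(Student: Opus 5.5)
We first linearize both mean curvatures around the Euclidean data of $\Sigma_a$. Assumption~\ref{ass:large-hypersurfaces} lets us rescale by $\rho_a$, so that $\rho_a^{-1}\Sigma_a$ is a uniformly convex hypersurface with uniformly controlled geometry. On $\Sigma_a$ we have $|h|=\mathcal O(\rho_a^{-q})$, $|\partial h|=\mathcal O(\rho_a^{-q-1})$ and $|\partial^2h|=\mathcal O(\rho_a^{-q-2})$.

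The induced metric is $\sigma=\sigma_E+h^\top$, where $h^\top_{\alpha\beta}=h_{\alpha\beta}$ in tangential indices. Since $\sigma_E$ has sectional curvatures of order $\rho_a^{-2}$ bounded below by uniform convexity and the Gauss equation, $\sigma$ still has positive sectional curvature for large $a$. Hence $B_\sigma$ is positive definite and $m_{\BY}(\Sigma_a)$ is well-defined. We write $H_0^+[\sigma]=H_0^+[\sigma_E]+DH_0^+[\sigma_E](h^\top)+\mathcal O(\rho_a^{-1-2q})$, and note that $H_0^+[\sigma_E]=H_E$. This holds because $K^E_a$ solves the contracted Gauss equation and is positive, so it coincides with the positive solution $\sqrt{\det B}\,B^{-1}$; that solution is unique because, in dimension three, $B=\tfrac12\Scal\Id-\Ric=\operatorname{cof}(A)$ for any solution $A$. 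The quadratic remainder, integrated against an area of order $\rho_a^3$, gives $\mathcal O(\rho_a^{2-2q})$. We treat the measure factor $d\mu_\sigma=(1+\tfrac12\tr_{\sigma_E}h^\top)d\mu_E+\dots$ the same way.

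For the physical mean curvature we use the standard first variation of $H$ under a metric change with fixed hypersurface:
\begin{align*}
H_g-H_E=-\tfrac12 h(\nu,\nu)H_E-\langle K^E,h^\top\rangle+\operatorname{div}_{\Sigma}(\cdot)+\tfrac12(\partial_jh_{ij}-\partial_ih_{jj})\nu^i\cdot(-1)+\dots,
\end{align*}
or more precisely the identity from Fan--Shi--Tam and Miao, which expresses $\tfrac12\big(2\operatorname{div}_\Sigma(h(\nu,\cdot)^\top)-\nabla_\nu\tr h+\dots\big)$ modulo quadratic terms. After adding the area-density variation, the part of $-\int(H_g\,d\mu_\sigma-H_E\,d\mu_E)$ that is linear in $h$ splits into three pieces: the ADM integrand $\tfrac12(\partial_jh_{ij}-\partial_ih_{jj})\nu^i$, which gives the factor $1/(6\omega_3)$ after multiplying by $1/(3\omega_3)$; a tangential divergence, which integrates to zero; and a zeroth-order pairing $-\tfrac12\int(H_E\sigma_E-K^E)^{\alpha\beta}h_{\alpha\beta}$. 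We verify that the $h(\nu,\nu)$ terms cancel, as they do in Fan--Shi--Tam.

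For the reference side we compute the linearization $DH_0^+$. Varying $\sigma$ changes $\Ric_\sigma$ by the usual formula, which is linear in $\nabla^2h^\top$ with curvature terms. Differentiating $B\mapsto\tr(\sqrt{\det B}B^{-1})$ at $B_E=\operatorname{cof}K^E$ gives $DH_0^+=\langle P_a,\dot B\rangle$, where $P_a$ is a tensor built algebraically from $K^E$. Integrating by parts twice moves all derivatives onto $P_a$, so $\int DH_0^+ d\mu_E=\int E_a^{\alpha\beta}h_{\alpha\beta}\,d\mu_E$ with $E_a$ depending only on the Euclidean geometry; this is (\ref{eq:BY-def-EP}). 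Combining with the physical-side pairing and the measure term gives $\mathfrak D_a$ as in (\ref{eq:BY-def-D}), so (\ref{eq:BY-AF-main-expansion}) follows with every error term of order $\rho_a^{2-2q}$.

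The identity $\nabla_\alpha\mathfrak D_a^{\alpha\beta}=0$ we plan to obtain by diffeomorphism invariance rather than by direct computation. If $h^\top=\Lie_X\sigma_E$ for a tangent field $X$, then both $\int H_0^+\,d\mu$ and the physical boundary functional restricted to tangential reparametrizations are unchanged to first order, so $\int\mathfrak D_a^{\alpha\beta}(\Lie_X\sigma_E)_{\alpha\beta}=0$ for all $X$. Similarly, $K^E_{\alpha\beta}\mathfrak D_a^{\alpha\beta}=0$ should follow from the normal variation $h^\top=2fK^E$. This is induced by moving $\Sigma_a$ normally in flat space, under which the embedded Euclidean Brown--York difference stays zero, so the pairing vanishes pointwise after the divergence condition is used. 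We expect this to be the main obstacle: the expansion of $\sqrt{\det B}B^{-1}$, together with the requirement that $\mathfrak D_a$ be exactly divergence-free and trace-free against $K^E$ and not merely up to error terms, needs a careful invariance argument.

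Finally, $\rho_a^{2-2q}\to0$ because $q>1$. The boundary term converges to $m_{\ADM}(g)$ because $R_g\in L^1$: by the divergence theorem on the region between $\Sigma_a$ and a large sphere, the difference equals $\int R_g$ up to $\mathcal O(\rho^{-2q+2})$, and this tends to $0$. This gives the equivalence (\ref{eq:BY-AF-main-criterion}).
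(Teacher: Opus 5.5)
Your outline follows the paper's route. You linearize $\mathcal Q_a$ at $\gE$ and split the first variation into the ADM flux plus a pairing with a Euclidean shape tensor. You bound the second variation by $\rho_a^{-1-2q}$ times an area $\mathcal O(\rho_a^3)$, derive the identities for $\mathfrak D_a$ from Euclidean invariance, and get the flux limit from $R_g\in L^1$. Your Cayley--Hamilton remark $B=\operatorname{adj}A$ is a tidy substitute for the paper's uniqueness proof.

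\textbf{Gap in the invariance step.} The step you flag as the main obstacle is incomplete as set up. The first-variation formula is not a function of $k^\top$ alone, because the flux term involves $k(\nu_E,\cdot)$ and normal derivatives. So prescribing ``$h^\top=\Lie_X\sigma_E$'' or ``$h^\top=2fK^E$'' says nothing about that term, and ``both functionals are unchanged'' does not yet give $\int\mathfrak D_a^{\alpha\beta}k_{\alpha\beta}\,d\mu_E=0$. You need three things:
\begin{enumerate}[label=\textup{(\alph*)}]
\item take the ambient variation $k=\Lie_Z\gE$, with $Z$ an extension of $Y+f\nu_E$;
\item use that $\Phi_t(\Sigma_a)$ stays strictly convex, so its Euclidean shape operator is the positive solution and $\mathcal Q_a(\Phi_t^*\gE)\equiv0$ for small $t$;
\item show that the flux of $\Lie_Z\gE$ vanishes: $\partial_jk_{ij}-\partial_ik_{jj}=\partial_j(\partial_jZ_i-\partial_iZ_j)$ is the divergence of an antisymmetric tensor, so its flux through a closed hypersurface is zero. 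The paper phrases this as $(DR)_{\gE}(\Lie_Z\gE)=0$ plus the divergence theorem.
\end{enumerate}
Then $\int\mathfrak D_a^{\alpha\beta}(\Lie_Y\gamma_a+2fK_a^E)_{\alpha\beta}\,d\mu_E=0$ for all $Y$ and $f$, and both pointwise identities follow at once. The trace identity does not need the divergence condition.

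\textbf{Gap in the remainder estimate.} Assumption~\ref{ass:large-hypersurfaces} only pinches the principal curvatures; it gives no bound on $\nabla^{\gamma_a}K_a^E$. So ``uniformly controlled geometry'' after rescaling is more than you have. Since $\sigma\mapsto H_0^+[\sigma]$ is second order, an intrinsic Taylor expansion in $h^\top$ has a remainder involving $(\nabla^{\gamma_a})^2h^\top$, and this contains $\nabla K_a^E\ast h(\nu_E,\cdot)$. Hence your pointwise $\mathcal O(\rho_a^{-1-2q})$ does not follow as written, and the same applies to positivity of the sectional curvature of $\sigma_a$. The paper avoids derivatives of $K_a^E$ via the Gauss equation:
\begin{enumerate}[label=\textup{(\alph*)}]
\item $B_{\sigma_{a,t}}$ is algebraic in $\sigma_{a,t}$, $K_{a,t}$ and the tangential part of $\Rm(g_t)$;
\item $K_{a,t}$ is algebraic in $K_a^E$, $h$, $\partial h$;
\item hence $|\partial_t^2B_{a,t}|\le C\rho_a^{-2-2q}$, positivity holds along the whole path $g_t=\gE+th$, and the $\rho_a^{-1-2q}$ bound follows.
\end{enumerate}
The area bound $\Vol_{\gamma_a}(\Sigma_a)\le C\rho_a^3$ also needs an argument, e.g.\ the Gauss map has degree one and $\det A_a^E\ge c\rho_a^{-3}$.

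\textbf{A slip in the physical-side formula.} Your displayed formula for $H_g-H_E$ is wrong: for $h=c\,\gE$ it gives $-\tfrac32cH_E$ instead of $-\tfrac12cH_E$. The correct linearization is
\[
\dot H=-\tfrac12(\partial_jh_{ij}-\partial_ih_{jj})(\nu_E)^i-\tfrac12\operatorname{div}_{\Sigma}\bigl(h(\nu_E,\cdot)^\top\bigr)-\tfrac12\langle K^E,h^\top\rangle .
\]
Together with the area term, this gives exactly the three pieces you state, so that part of your argument survives.
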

	
	\begin{remark}
		The main point of Theorem~\ref{thm:BY-AF-large-boundary} is the
		criterion \eqref{eq:BY-AF-main-criterion}.  For large coordinate
		spheres, this criterion is satisfied; see
		Corollary~\ref{cor:BY-AF-coordinate-spheres}.  This may be viewed as
		a four-dimensional analogue of the large-sphere limit for the
		classical Brown--York mass in asymptotically flat three-manifolds
		proved by Fan--Shi--Tam \cite{FanShiTam}.
	\end{remark}
	
	We next apply the criterion
	\eqref{eq:BY-AF-main-criterion} to a natural four-dimensional analogue
	of the nearly round surfaces introduced by Shi--Wang--Wu
	\cite{ShiWangWu}.  In the three-dimensional asymptotically flat setting,
	their notion describes large surfaces that become asymptotically umbilic
	at their natural distance scale, while their radii, diameter, and area
	remain uniformly controlled at that scale.  They proved that the
	classical Brown--York mass recovers the ADM mass along such surfaces.
	
	In the present four-dimensional setting, we impose the corresponding
	decay of the trace-free second fundamental form and its first derivative,
	together with the natural scale-invariant controls of the radii,
	diameter, and volume; see
	Definition~\ref{def:BY-AF-nearly-round}.  Thus the hypersurfaces are
	asymptotically round in an intrinsic geometric sense, without being
	assumed to be coordinate spheres or prescribed graphs over them.  The
	next theorem shows that, provided the asymptotic decay of the metric and
	the nearly round rate $\tau$ satisfy a suitable compatibility condition,
	the Brown--York type mass again recovers the ADM mass.
	
	\begin{theorem}[Brown--York type mass along nearly round hypersurfaces]
		\label{thm:BY-AF-nearly-round}
		Let $(M^4,g)$ satisfy the hypotheses of
		Theorem~\ref{thm:BY-AF-large-boundary}, and let $q>1$ be the
		asymptotic decay order in the fixed asymptotically flat chart.
		Let $\{\Sigma_a\}$ be a nearly round family of rate $\tau>0$.
		Assume that, for all sufficiently large $a$, the hypersurface
		$\Sigma_a$ lies in the fixed asymptotically flat chart and encloses
		$B_R$. If moreover
		\begin{align*}
			q+\min\{q,\tau\}>2,
		\end{align*}
		then
		\begin{align}
			\lim_{a\to\infty}m_{\BY}(\Sigma_a)
			=
			m_{\ADM}(g).
		\end{align}
	\end{theorem}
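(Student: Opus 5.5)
The plan is to reduce Theorem~\ref{thm:BY-AF-nearly-round} to the criterion \eqref{eq:BY-AF-main-criterion} of Theorem~\ref{thm:BY-AF-large-boundary}. First I will check that a nearly round family of rate $\tau$ (Definition~\ref{def:BY-AF-nearly-round}) satisfies Assumption~\ref{ass:large-hypersurfaces}. Along the way I will extract the quantitative information needed on $\mathfrak D_a$. The scale-invariant controls on radii, diameter and volume give a scale $\rho_a\to\infty$ with $\Sigma_a\subset\{c\rho_a\le |x|\le C\rho_a\}$ (using that $\Sigma_a$ encloses $B_R$) and $|\Sigma_a|_E\sim\rho_a^3$. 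The decay of the trace-free second fundamental form, $|\mathring K_a^E|\lesssim \rho_a^{-1-\tau}$ and $|\nabla\mathring K_a^E|\lesssim\rho_a^{-2-\tau}$, combined with the Codazzi equation, gives $|\nabla H_a^E|\lesssim\rho_a^{-2-\tau}$. Hence $H^E_a=3/r_a+\mathcal O(\rho_a^{-1-\tau})$ for some constant $r_a\sim\rho_a$, so $K^E_a$ is uniformly convex at scale $\rho_a$. Passing between the metrics $g$ and $\gE$ costs only $\mathcal O(\rho_a^{-q})$ relative errors, so the hypotheses of Assumption~\ref{ass:large-hypersurfaces} hold for large $a$. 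Theorem~\ref{thm:BY-AF-large-boundary} then applies with remainder $\mathcal O(\rho_a^{2-2q})\to0$, and the boundary term tends to $m_{\ADM}(g)$.

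It remains to show $\int_{\Sigma_a}\mathfrak D_a^{\alpha\beta}h_{\alpha\beta}\,d\mu_E\to0$. The key structural input is that $\mathfrak D_a$, built from \eqref{eq:BY-def-EP}–\eqref{eq:BY-def-D}, should vanish identically for a round sphere: there the linearized reference problem is exactly the Euclidean one, and no shape correction arises. I will therefore expand $\mathfrak D_a$ around the umbilic configuration $K^E_a=\frac{1}{r_a}\sigma_E+\mathring K^E_a$. Each term of $\mathfrak D_a$ should contain at least one factor of $\mathring K^E_a$, or of its derivative, or of $\nabla H^E_a$. With the natural weights this gives the pointwise bound $|\mathfrak D_a|\lesssim\rho_a^{-2-\tau}$; that is, one extra factor $\rho_a^{-\tau}$ relative to the generic size $\rho_a^{-2}$. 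Since $|h|\lesssim\rho_a^{-q}$ on $\Sigma_a$ and the volume is $\sim\rho_a^3$, a crude estimate gives
\begin{align*}
	\Bigl|\int_{\Sigma_a}\mathfrak D_a^{\alpha\beta}h_{\alpha\beta}\,d\mu_E\Bigr|\lesssim\rho_a^{1-q-\tau}.
\end{align*}
This only suffices when $q+\tau>1$ with room to spare. In fact we need a better estimate, because this bound is not what matches the hypothesis $q+\min\{q,\tau\}>2$.

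To obtain that hypothesis I will use the two identities $\nabla_\alpha\mathfrak D_a^{\alpha\beta}=0$ and $(K^E_a)_{\alpha\beta}\mathfrak D_a^{\alpha\beta}=0$ to integrate by parts and trade $h$ for its derivatives. Because $\mathfrak D_a$ is divergence-free, any component of $h|_{\Sigma_a}$ of the form $\Lie_X\sigma_E$ integrates to zero. The trace condition, combined with near-umbilicity, kills the pure-trace part up to an $\mathring K^E_a$ error. Concretely, I will write $h$ on $\Sigma_a$ as its value adjusted by its leading homogeneous part. Then $\partial h=\mathcal O(\rho_a^{-q-1})$ gains a factor $\rho_a^{-1}$ in each integration by parts, while each use of the trace identity costs $\rho_a^{-\tau}$ through $\mathring K^E_a$. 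Balancing the two effects should yield a bound of the form $\rho_a^{2-q-\min\{q,\tau\}}$ (possibly with extra terms $\rho_a^{2-2q}$, matching the remainder). This tends to zero exactly under $q+\min\{q,\tau\}>2$.

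The main obstacle is the exact form of $\mathfrak D_a$: proving that it vanishes to first order in $\mathring K^E_a$ with the right derivative count, and organizing the integration by parts so that the gain is genuinely $\min\{q,\tau\}$. I will attempt this first by writing $\mathfrak D_a$ as a linearization of $A_0^+$ at $\frac1{r_a}\Id$, where $B_\sigma$ is a multiple of the identity, and checking the round case explicitly.
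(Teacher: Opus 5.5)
Your overall reduction is the same as the paper's: show that a nearly round family satisfies Assumption~\ref{ass:large-hypersurfaces}, apply Theorem~\ref{thm:BY-AF-large-boundary}, and show the correction term tends to zero. The step where you bound that correction term, however, has a genuine gap, and the scaling that motivates your third paragraph is off by one power of $\rho_a$.

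\textbf{The scaling.} $\mathfrak D_a=\mathcal E_{\gamma_a}(P)+\frac12A_0$ has natural size $\rho_a^{-1}$, not $\rho_a^{-2}$. Indeed $A_0\sim\rho_a^{-1}$ and $P\sim\rho_a$, so $\nabla^2P\sim\rho_a^{-1}$; for coordinate spheres, $\mathcal E_{\gamma_r}(P)=-\frac1{2r}\gamma_r$. The near-round cancellation can therefore give at best $|\mathfrak D_a|\lesssim\rho_a^{-1-\eta}$ with $\eta=\min\{q,\tau\}$. The rate is $\eta$ rather than $\tau$ already for $\mathring K_a^E$: Definition~\ref{def:BY-AF-nearly-round} controls $\mathring K_a^g$, and passing to the Euclidean second fundamental form costs $\mathcal O(\rho_a^{-1-q})$. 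The crude estimate then gives $\rho_a^{-1-\eta}\cdot\rho_a^{-q}\cdot\rho_a^{3}=\rho_a^{2-q-\eta}$, which is exactly the hypothesis $q+\min\{q,\tau\}>2$.

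Your own bound $\rho_a^{1-q-\tau}$ would, if true, prove the theorem with no compatibility condition at all, since $q+\tau>1$ always holds. So your claim that a better estimate is needed is self-contradictory. The balancing scheme built on the cokernel identities is not needed. It is also not well defined, since it relies on a ``leading homogeneous part'' of $h$ that the hypothesis $\partial^\beta h=\mathcal O(|x|^{-q-|\beta|})$ does not provide.

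\textbf{The real obstacle.} $\mathfrak D_a$ contains $\mathcal L_{\gamma_a}(P_a)$, which involves second derivatives of $P_a=\frac{1}{\det A_a^E}\bigl(\frac12H_a^EA_a^E-(A_a^E)^2\bigr)$. These bring in $\nabla^2\mathring K_a^E$ and $\nabla^2H_a^E$; by the contracted Codazzi equation the latter is again $\frac32\nabla\operatorname{div}\mathring K_a^E$. Definition~\ref{def:BY-AF-nearly-round} controls only $\mathring K_a^g$ and its first derivative, so no pointwise bound on $\mathfrak D_a$ is available from the hypotheses.

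The paper handles this as follows.
\begin{enumerate}
	\item Subtract the parallel umbilic model $\bar P_a=\frac{1}{2\varkappa_a}\Id$, where $3\varkappa_a$ is the average of $H_a^E$. In the model, $\mathcal E(\bar P_a)+\frac12\varkappa_a\Id=0$.
	\item Split $\mathfrak D_a=\mathcal L_{\gamma_a}(P_a-\bar P_a)+\mathcal A_a$, where the algebraic part satisfies $|\mathcal A_a|\lesssim\rho_a^{-1-\eta}$ using only $C^0$ closeness to the model.
	\item Integrate the differential part by parts twice against $h$. Since $|P_a-\bar P_a|\lesssim\rho_a^{1-\eta}$ and the tangential second derivatives of $h|_{\Sigma_a}$ are $\mathcal O(\rho_a^{-2-q})$ (using $|K_a^E|\lesssim\rho_a^{-1}$ and $|\nabla K_a^E|\lesssim\rho_a^{-2}$), this gives $\rho_a^{1-\eta}\cdot\rho_a^{-2-q}\cdot\rho_a^3=\rho_a^{2-q-\eta}$.
\end{enumerate}
This model subtraction plus double integration by parts is the missing step; with it, your outline closes.

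One smaller point: you assert $H_a^E=3/r_a+\mathcal O(\rho_a^{-1-\tau})$ with $r_a\sim\rho_a$ without justification. The comparability of $H_a^E$ with $\rho_a^{-1}$ needs an argument. The paper compares the Hessian of $|x|$ at the points of $\Sigma_a$ nearest to and farthest from the origin, then uses the oscillation bound from the diameter estimate.
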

	
	\begin{remark}
		In the special case $\tau=q$, the compatibility condition in
		Theorem~\ref{thm:BY-AF-nearly-round} is automatic. In this sense, at the level of recovering
		the ADM mass along nearly round exhaustions,
		Theorem~\ref{thm:BY-AF-nearly-round} may be regarded as a
		four-dimensional analogue of the result of Shi--Wang--Wu
		\cite{ShiWangWu}.
	\end{remark}

	Theorems~\ref{thm:BY-AF-large-boundary} and~\ref{thm:BY-AF-nearly-round} should be viewed as a first step. They show that the
	Brown--York type mass defined here has the correct large-boundary behavior and
	recovers the ADM mass under the stated shape-defect condition. Further
	questions remain, including positivity and rigidity in analogy with the
	classical Brown--York theory \cite{ShiTam02}, possible monotonicity along
	geometrically natural foliations near infinity \cite{HuiskenIlmanen,HuiskenYau},
	higher-order asymptotic expansions of the mass and Weyl contributions
	\cite{BH,WY}, and the relation with classical embedding-based Brown--York mass
	and Bartnik-type static extensions
	\cite{BrownYork,FanShiTam,LiWeinstein,Bartnik89,MiaoShiTam,Anderson13,McCormick24}.

	\subsection{Idea of proof}
	
	We return to the original
	Hamilton--Jacobi viewpoint of Brown and York \cite{BrownYork}. In that
	formulation, one starts from the gravitational action with the boundary
	three-metric fixed on the timelike boundary. After evaluating the action on a classical solution, one obtains the classical action, but its normalization is not fixed: one may subtract a term $S_0$ depending only on the prescribed boundary data. Thus the reference term should be understood as the choice of the zero level of quasilocal energy from the boundary geometry.
	
	This observation is the guiding point of our construction. In the classical
	Brown--York definition, the subtraction term is computed by isometrically
	embedding the boundary into a flat reference space. In four dimensions such an
	embedding may not exist, but the Hamilton--Jacobi interpretation only requires
	the reference term to be determined by the prescribed boundary metric. We
	therefore recover the flat reference mean curvature intrinsically: for a
	boundary metric with positive sectional curvature, the contracted Gauss
	equation has a unique positive solution, and the trace of this solution defines
	$H_0^+[\sigma]$. Thus the reference term used here depends only on the boundary
	geometry, exactly as suggested by the Brown--York viewpoint. Whenever a flat
	reference embedding exists, this intrinsic reference term agrees with the
	classical one.
	
	We then study the large-boundary behavior of the resulting Brown--York type
	mass. For a family of large hypersurfaces $\{\Sigma_a\}$, we regard the
	Brown--York type mass as a functional of the ambient metric and expand it at
	the Euclidean metric. Since the Euclidean hypersurface has the same physical
	and reference mean curvature, the zeroth-order term vanishes. The first
	variation splits into the usual ADM boundary integral and an additional term
	depending on the Euclidean shape of $\Sigma_a$ and the asymptotic perturbation
	$h=g-\gE$. The quadratic remainder is controlled uniformly by the decay of $h$
	and the scale of $\Sigma_a$.
	
	Therefore the convergence problem is reduced to the shape-dependent correction
	term. The ADM boundary integral converges to $m_{\ADM}(g)$, while the correction
	term measures the failure of the Euclidean geometry of $\Sigma_a$ to behave like
	the round model. For coordinate spheres this correction vanishes identically.
	
	For nearly round hypersurfaces, the intrinsic near-roundness assumptions first
	imply that, in the fixed asymptotically flat chart, the Euclidean geometry of
	$\Sigma_a$ is close to an umbilic constant-curvature model at the natural
	scale. We subtract this model from the correction tensor. The model part
	cancels, and the remaining part is controlled by the deviation from roundness
	together with the decay of $h$. For the terms containing derivatives of the
	Euclidean geometry, integration by parts transfers the derivatives onto the
	decaying perturbation $h$. This gives the required decay of the correction
	term.
	
	Combining these estimates, the shape-dependent correction tends to zero under
	the condition $q+\min\{q,\tau\}>2$. Hence the Brown--York type mass converges
	to the ADM mass along nearly round families.
	
	\subsection{Organization of the paper}
	
	In Section~\ref{sec:BY-mass}, we define the Brown--York type mass used in this paper. We first explain why the classical flat reference embedding may fail in dimension four, and then replace the embedding-based reference term by the intrinsic positive solution of the contracted Gauss equation. In Section~\ref{sec:BY-large-hypersurfaces}, we study the large-boundary limit in a fixed asymptotically flat chart. We derive the expansion of the Brown--York type mass into the ADM boundary integral, a shape-dependent correction term, and a quadratic remainder. The ADM term converges to $m_{\ADM}(g)$, so the remaining issue is to control the correction term. Consequently, we proved Theorem~\ref{thm:BY-AF-large-boundary} in Subsection~\ref{subsec:large-surface-By-mass}. We then show that this correction vanishes for coordinate spheres and decays for nearly round hypersurfaces under a suitable condition, which leads to the proof of Theorem~\ref{thm:BY-AF-nearly-round} in Subsection~\ref{subsec:nearly-round-surface-BY-mass}. Finally, we verify the hypotheses for small radial graphs over large coordinate spheres.

	\subsection{Acknowledgments}
	
	I am thankful for discussions with my supervisor Yin Hao.
	
	\section{Brown--York type mass on four-dimensional AF ends}\label{sec:BY-mass}
	
	In this section we define a Brown--York type mass on four-dimensional
	asymptotically flat ends.  The main issue is that, in contrast with the
	classical three-dimensional setting, the reference term $H_0$ in \eqref{eq:classical-BY} cannot in general be
	defined by isometrically embedding the boundary into a flat background.
	We therefore recover the reference mean curvature intrinsically, by solving
	the contracted Gauss equation for the shape operator.
	
	\subsection{Local isometric embeddability in the Euclidean background}
	\label{subsec:local-embed}
	
	In four dimensions, the existence of a local isometric embedding into the Euclidean background is itself a geometric condition on the boundary metric. To see that this issue genuinely occurs in the asymptotically flat setting,
	we recall a refined asymptotic expansion obtained in \cite{YouALE26}.
	Although that result is formulated for scalar-flat ALE ends, its specialization
	to the trivial group at infinity, $\Gamma=\{1\}$, gives a class of
	scalar-flat AF ends with an AF coordinate system in which
	\begin{align}\label{eq:metric-expansion-after}
		g_{ij}
		=
		\delta_{ij}
		+
		\left(
		\left(\Weyl_\infty\right)_{ik\ell j}
		+
		\Xi(\lambda)_{ijk\ell}
		\right)
		\frac{x_kx_\ell}{|x|^4}
		+
		\mathcal O_\infty(|x|^{-2-\varepsilon}),
		\qquad
		\varepsilon\in(0,1).
	\end{align}
	Here the coefficient of the $|x|^{-2}$ term splits into a scalar part determined by the ADM mass
	and a Weyl curvature term $\Weyl_\infty$ at infinity.  The discussion below uses this refined expansion to exhibit a possible obstruction to flat reference
	embeddability.
	
	Let $\sigma_r$ be the metric induced by $g$ on the coordinate sphere
	$\Sph_r$.  We show that, when $\Weyl_\infty\neq0$, the metric $\sigma_r$ is not
	locally isometrically embeddable into $\R^4$ for all sufficiently large $r$.
	
	Our obstruction calculation uses the local criterion of Li--Weinstein
	\cite{LiWeinstein}.  In the setting relevant here, their Theorem~7 shows
	that, after solving the once-contracted Gauss equation for a candidate shape
	operator, local isometric embeddability into $\R^4$ is equivalent to the
	Codazzi equation for that solution. Thus, once the contracted Gauss equation has been solved, local flat
	embeddability is equivalent to the Codazzi equation for the resulting shape
	operator.
	
	Under the radial identification
	$\Sph^3\ni\omega\mapsto r\omega\in\Sph_r$, set
	$\hat\sigma_r:=r^{-2}\sigma_r$.  The local embeddability problem for the
	large coordinate sphere $(\Sph_r,\sigma_r)$ is then reduced to that for the
	rescaled metric $\hat\sigma_r$, which is close to the round metric on
	$\Sph^3$. From \eqref{eq:metric-expansion-after}, the induced metric has the rescaled expansion
	\begin{align}
		\hat\sigma_r
		=
		\sigma+r^{-2}\tau+\mathcal O(r^{-2-\varepsilon}),
	\end{align}
	where $\sigma$ is the round metric on $\Sph^3$, and
	\begin{align}
		\tau
		=
		\tau^{\mass}+\tau_{\Weyl},
		\quad
		\tau^{\mass}
		=
		-\frac{\lambda}{9}\sigma,
		\quad
		\tau_{\Weyl}(X,Y)
		=
		\Weyl_\infty(X,\omega,\omega,Y),
	\end{align}
	here $X,Y\in T_\omega\Sph^3$ and
	$\lambda=9\,m_{\ADM}(g)$.
	
	Any local isometric embedding of $(\Sph_r,\sigma_r)$ into $\R^4$ would
	have a shape operator $A_r$ satisfying the contracted Gauss equation
	\begin{align*}
		\Ric_{\sigma_r}^{\sharp}
		=
		(\tr A_r)A_r-A_r^2.
	\end{align*}
	Under the radial identification and the rescaling
	$\hat\sigma_r:=r^{-2}\sigma_r$, the corresponding rescaled shape operator
	$\hat A_r:=rA_r$ would therefore satisfy
	\begin{align}\label{eq:BY-prep-rescaled-contracted-Gauss}
		\Ric_{\hat\sigma_r}^{\sharp}
		=
		(\tr \hat A_r)\hat A_r-\hat A_r^2.
	\end{align}
	
	We now consider \eqref{eq:BY-prep-rescaled-contracted-Gauss} independently
	of the existence of an embedding.  Since $\hat\sigma_r$ is sufficiently close
	to the round metric $\sigma$ for large $r$, the Implicit Function Theorem
	gives a unique $\hat\sigma_r$-self-adjoint solution $\hat A_r:=\hat A_0[\hat\sigma_r]\in\End(T\Sph^3)$ near $\Id$ of \eqref{eq:BY-prep-rescaled-contracted-Gauss}.  By the local
	criterion of Li--Weinstein \cite{LiWeinstein}, the metric $\hat\sigma_r$ is
	locally isometrically embeddable in $\R^4$ precisely when this solution
	satisfies the Codazzi equation.  Indeed, if such an embedding exists, its
	rescaled shape operator agrees with $\hat A_r$ by the local uniqueness above.
	
	Define the Codazzi defect as
	\begin{align}
		\mathcal C_r(X,Y,Z)
		:=
		(\nabla_X^{\hat\sigma_r}\hat K_r)(Y,Z)
		-
		(\nabla_Y^{\hat\sigma_r}\hat K_r)(X,Z),
	\end{align}
	where
	$\hat K_r(X,Y):=\hat\sigma_r(\hat A_rX,Y)$. Thus local embeddability is equivalent to $\mathcal C_r=0$, and the leading term of $\mathcal C_r$ identifies the geometric source of any failure of local
	isometric embeddability.
	
	More precisely, solving \eqref{eq:BY-prep-rescaled-contracted-Gauss} near
	$(\sigma,\Id)$ gives a unique $\hat\sigma_r$-self-adjoint endomorphism
	\begin{align*}
		\hat A_r
		=
		\Id+r^{-2}B[\tau]+\mathcal O(r^{-2-\varepsilon}),
	\end{align*}
	where $B[\tau]$ depends linearly on $\tau$. Thus, if we consider the first-order Taylor expansion of the Codazzi
	defect around $(\sigma,\Id)$, a direct consequence is that the first nonzero term in the Codazzi defect
	is governed by the pair $(\tau,B[\tau])$. Therefore, we need to compute $D\mathcal C\big|_{(\sigma,\Id)}$ on $\bigl(\tau^{\mathrm{mass}}, B[\tau^{\mathrm{mass}}]\bigr)$ and $\bigl(\tau_{\Weyl}, B[\tau_{\Weyl}]\bigr)$, respectively. It is natural to determine $B[\tau]$ from the linearization of the once-contracted Gauss equation \eqref{eq:BY-prep-rescaled-contracted-Gauss}. 
	
	First, the mass part makes no contribution to the leading term of the Codazzi defect, i.e. $D\mathcal C\big|_{(\sigma,\Id)}\bigl(\tau^{\mathrm{mass}}, B[\tau^{\mathrm{mass}}]\bigr)$ vanishes. Indeed, $\tau^{\mass}$ represents the infinitesimal change of the induced metric under a radial deformation of the round sphere; hence, it arises from an actual flat hypersurface deformation. Consequently, to determine the first nonzero term of the Codazzi defect of $\hat A_r$, it remains to examine the Weyl part.
	
	Let $b[\tau](X,Y):=\sigma(B[\tau]X,Y)$. Linearizing \eqref{eq:BY-prep-rescaled-contracted-Gauss} at
	$(\sigma,\Id)$ gives
	\begin{align}\label{eq:BY-prep-b-eq}
		b[\tau]+(\tr_{\sigma}b[\tau])\sigma
		=
		(D\Ric_{\sigma})(\tau)-2\tau .
	\end{align}
	The standard description of the Weyl term on
	$\Sph^3$ identifies $\tau_{\Weyl}$ with a sum of traceless left-- and
	right--invariant symmetric $2$-tensors. Using Milnor's Ricci formula for
	left-invariant metrics on $\SU(2)$ (see, e.g., \cite{Milnor76}), one obtains
	\begin{align*}
		(D\Ric_{\sigma})(\tau_{\Weyl})
		=
		6\tau_{\Weyl}.
	\end{align*}
	Substituting this into \eqref{eq:BY-prep-b-eq} gives
	\begin{align}\label{eq:BY-prep-b-Weyl}
		b[\tau_{\Weyl}]
		=
		4\tau_{\Weyl}.
	\end{align}
	Straightforward variational calculations yield that the first nonzero term in the asymptotic expansion of the Codazzi defect of the pair
	$(\hat\sigma_r,\hat A_r)$, denoted by $\mathcal C(\hat\sigma_r,\hat A_r)$, is
	\begin{align}\label{eq:linearization-Codazzi-defect}
		D\mathcal C\big|_{(\sigma,\Id)}
		\bigl(\tau_{\Weyl},B[\tau_{\Weyl}]\bigr)(X,Y,Z)
		=
		(\nabla^{\sigma}_X b[\tau_{\Weyl}])(Y,Z)
		-
		(\nabla^{\sigma}_Y b[\tau_{\Weyl}])(X,Z).
	\end{align}
	On the other hand, differentiating
	$\tau_{\Weyl}(X,Y)=\Weyl_\infty(X,\omega,\omega,Y)$ along the round sphere gives
	\begin{align}\label{eq:BY-prep-Weyl-curl}
		(\nabla_X^{\sigma}\tau_{\Weyl})(Y,Z)
		-
		(\nabla_Y^{\sigma}\tau_{\Weyl})(X,Z)
		=
		-3\Weyl_\infty(X,Y,\omega,Z).
	\end{align}
	Combining \eqref{eq:BY-prep-b-Weyl}, \eqref{eq:linearization-Codazzi-defect} and \eqref{eq:BY-prep-Weyl-curl}, we obtain
	\begin{align}\label{eq:BY-prep-Weyl-Codazzi-defect}
		\mathcal C(\hat\sigma_r,\hat A_r)(X,Y,Z)
		=
		-12r^{-2}\Weyl_\infty(X,Y,\omega,Z)
		+
		o(r^{-2}).
	\end{align}
	Thus, the first obstruction to the Codazzi equation and local isometric embeddability may arise from the Weyl tensor at infinity. The following proposition makes this obstruction explicit.
	
	\begin{proposition}\label{prop:BY-Weyl-obstruction-LW}
		Assume that $\Weyl_\infty\neq 0$. Then for all sufficiently large $r$, the
		boundary metric $\sigma_r$ of large coordinate sphere $\Sph_r$ is not locally isometrically embeddable into the flat Euclidean background.
	\end{proposition}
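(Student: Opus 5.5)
The plan is a contradiction argument built on the Li--Weinstein criterion and the leading-order Codazzi defect \eqref{eq:BY-prep-Weyl-Codazzi-defect}. Suppose that for a sequence $r_k\to\infty$ (or for some large $r$) the metric $\sigma_r$ admits a local isometric embedding into $\R^4$ on some open set $U\subset\Sph_r$.

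\textbf{Step 1: reduce to the rescaled problem.} After the radial identification, $\hat\sigma_r=r^{-2}\sigma_r$ is locally embeddable on the corresponding open set $\hat U\subset\Sph^3$. The embedding has a rescaled shape operator $\hat A$ solving \eqref{eq:BY-prep-rescaled-contracted-Gauss}. Choose the normal orientation so that $\hat A$ is positive. For $r$ large, $\Ric_{\hat\sigma_r}$ is $C^0$-close to $2\hat\sigma_r$, so $B_{\hat\sigma_r}$ is close to $\Id$. The pointwise algebraic uniqueness of the positive solution, $A=\sqrt{\det B}\,B^{-1}$, then forces $\hat A$ to coincide on $\hat U$ with the implicit-function solution $\hat A_r=\hat A_0[\hat\sigma_r]$. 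This needs a little care, since local uniqueness near $\Id$ alone is not enough. One must also note that any solution is pointwise near $\pm\Id$, because $\Ric$ is near $2\Id$ and the Gauss equation in dimension three determines $A$ up to sign when $B$ is positive definite.

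\textbf{Step 2: contradict the Codazzi equation.} The embedding satisfies the Codazzi equation, so $\mathcal C_r\equiv0$ on $\hat U$. By \eqref{eq:BY-prep-Weyl-Codazzi-defect},
\begin{align*}
	\mathcal C_r(X,Y,Z)=-12r^{-2}\Weyl_\infty(X,Y,\omega,Z)+o(r^{-2}),
\end{align*}
with the error uniform in $C^0$ on $\Sph^3$. This uniformity follows because \eqref{eq:metric-expansion-after} holds with $\mathcal O_\infty$ remainders, so $\hat\sigma_r$ and hence $\hat A_r$, which depends smoothly on $\hat\sigma_r$ by the implicit function theorem in $C^{k,\alpha}$, have expansions with errors controlled in $C^1$. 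Multiplying by $r^2$ and letting $r\to\infty$ shows that $\Weyl_\infty(X,Y,\omega,Z)=0$ for all $\omega\in\hat U$ and $X,Y,Z\perp\omega$.

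\textbf{Step 3: the algebraic lemma, which is the main obstacle.} It remains to show that a Weyl tensor $W$ on $\R^4$ with $W(X,Y,\omega,Z)=0$ for all tangent $X,Y,Z$ and all $\omega$ in an open set must vanish. The expression is polynomial in $\omega$, namely the components of $W(\cdot,\cdot,\omega,\cdot)$ restricted to $\omega^\perp$. Vanishing on an open set therefore gives vanishing on all of $\Sph^3$ by analyticity. Taking $\omega=e_4$ kills all components $W_{ab4c}$ with $a,b,c\le3$. Rotating $\omega$ over all directions then kills every component with exactly one repeated-free index pattern. The components $W_{abcd}$ with all indices tangent are recovered from the trace-free condition: in dimension four, $W_{abcd}$ with $a,b,c,d\le 3$ is determined by the $W_{a4b4}$ through tracelessness. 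Those $W_{a4b4}$ in turn vanish because they arise as $W(X,Y,\omega',Z)$ for a tilted $\omega'$. Hence $W=0$, contradicting $\Weyl_\infty\neq0$.

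The delicate points are the uniqueness identification in Step 1 and making the $o(r^{-2})$ estimate uniform in $C^0$ with one derivative. I expect Step 3 to be routine linear algebra once it is organized via the decomposition into self-dual and anti-self-dual parts.
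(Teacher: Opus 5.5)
Your Step 1 is sound. It actually tightens a point the paper passes over quickly: you identify the shape operator of a hypothetical embedding with $\hat A_r$ through $A=\pm\sqrt{\det B}\,B^{-1}$, rather than relying only on uniqueness near $\Id$.

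The gap is in the order of quantifiers in Steps 2--3. In your contradiction hypothesis, the open set $U=U_k\subset\Sph_{r_k}$ on which $\sigma_{r_k}$ embeds may depend on $k$. Yet in Step 2 you let $r\to\infty$ at a fixed $\omega\in\hat U$, and in Step 3 you apply analyticity on a fixed open set. Nothing forces the $\hat U_k$ to share a point, let alone an open set. The uniform $o(r^{-2})$ bound only gives $\Weyl_\infty(\cdot,\cdot,\omega_k,\cdot)|_{\omega_k^\perp}\to0$ along some $\omega_k\in\hat U_k$, hence vanishing at a single accumulation direction $\omega_*$. That does not force $\Weyl_\infty=0$. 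For example, the algebraic Weyl tensor with $W_{1212}=W_{3434}=2$, $W_{1313}=W_{2424}=W_{1414}=W_{2323}=-1$, and all other independent components zero, satisfies $W(X,Y,e_4,Z)=0$ for all $X,Y,Z\perp e_4$. Near such directions the leading term of $\mathcal C_r$ does not dominate the remainder. So the leading-order expansion alone cannot exclude embeddings of small open sets there. In effect you are trying to prove the stronger claim that no open subset of $\Sph_r$ embeds, and this argument does not deliver it. The ``for some large $r$'' variant has the same problem, since at a single $r$ there is no limit to take.

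The repair is to reverse the quantifiers, as the paper does. First use the algebraic lemma (Lemma~\ref{lem:BY-radial-Weyl-detection}) to fix $\omega_0$ and $X_0,Y_0,Z_0\perp\omega_0$ with $\Weyl_\infty(X_0,Y_0,\omega_0,Z_0)\neq0$. Then evaluate the expansion \eqref{eq:BY-prep-Weyl-Codazzi-defect} at this fixed point to get $\mathcal C_r\neq0$ at $\omega_0$ for all large $r$. By your Step 1, no neighbourhood of $r\omega_0$ embeds isometrically in $\R^4$. This needs neither analyticity nor uniformity in $\omega$. Equivalently, if you read ``locally embeddable'' as ``every point has an embeddable neighbourhood'', run your contradiction pointwise at each fixed $\omega$.

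Finally, your Step 3 sketch has a small inaccuracy. The tilted directions only yield the relations $a_{ik}=a_{jk}$ among the $a_{ij}=W_{ijij}$, because constant-curvature tensors are invisible to the contractions $W(X,Y,\omega,Z)$ with $X,Y,Z\perp\omega$. It is tracelessness, not a tilted contraction, that then kills the $a_{ij}$, exactly as in the paper's lemma.
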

	
	The assumption $\Weyl_\infty\neq0$ does not by itself imply that the
	particular contraction
	$\Weyl_\infty(X,Y,\omega,Z)$ is nonzero for every choice of
	$\omega\in\Sph^3$ and $X,Y,Z\in T_\omega\Sph^3$. The following elementary observation shows that these contractions nevertheless detect a nonzero Weyl tensor.
	
	\begin{lemma}\label{lem:BY-radial-Weyl-detection}
		Let $R$ be an algebraic curvature tensor on a Euclidean vector space
		$V$ of dimension at least $4$. Suppose that
		\begin{align*}
			R(X,Y,\omega,Z)=0
		\end{align*}
		for every unit vector $\omega\in V$ and every
		$X,Y,Z\in\omega^\perp$. Then $R$ is of constant-curvature type. In
		particular, if $R$ is an algebraic Weyl tensor, then $R=0$.
	\end{lemma}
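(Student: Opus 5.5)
The plan is to work in a fixed orthonormal basis $e_1,\dots,e_n$ of $V$ ($n\ge4$), and to write $R_{ijkl}=R(e_i,e_j,e_k,e_l)$. I will show that every component of $R$ with at least three distinct indices vanishes, and that all the basis sectional curvatures $R_{ijji}$ ($i\neq j$) are equal. The argument then has three steps.

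\emph{Step 1: components with an isolated index.} Apply the hypothesis with $\omega=e_l$ and $X,Y,Z\in\{e_m:m\neq l\}$. This gives $R_{ijlk}=0$ whenever $l\notin\{i,j,k\}$. The four-distinct-index components are immediately of this form. For three distinct indices $i,j,l$, the pair symmetry and the antisymmetries give $R_{ijil}=R_{ilij}=R_{liji}$. This last component is $R(e_l,e_i,e_j,e_i)$ with $\omega=e_j$ orthogonal to $e_l$ and $e_i$, so it vanishes. Every other arrangement of an index pattern $\{i,i,j,l\}$ reduces to $\pm R_{ijil}$ by the algebraic symmetries. Hence the only possibly nonzero components are $\pm R_{ijji}$ with $i\neq j$, i.e.\ the sectional curvatures $K_{ij}$ of the coordinate planes.

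\emph{Step 2: rotated normals.} This is the only step where the hypothesis is used for non-basis $\omega$, and it is the key point. Fix distinct $i,j,k$ and set $\omega=\cos\theta\,e_i+\sin\theta\,e_j$, $Y=-\sin\theta\,e_i+\cos\theta\,e_j$ and $X=Z=e_k$. All three vectors are orthogonal to $\omega$. Expanding $R(e_k,Y,\omega,e_k)$ bilinearly, the cross terms $R_{kijk}$ and $R_{kjik}$ vanish by Step 1, leaving
\begin{align*}
	0=R(e_k,Y,\omega,e_k)=\sin\theta\cos\theta\,\bigl(K_{kj}-K_{ki}\bigr),
\end{align*}
where the sign depends only on the curvature convention. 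Taking $\theta=\pi/4$ gives $K_{ki}=K_{kj}$ for all distinct $i,j,k$. Since $n\ge3$, any two coordinate planes can be linked by a chain of planes that share an index. For example, $K_{ij}=K_{ik}$ by the identity with base index $i$, and $K_{ik}=K_{lk}$ by the identity with base index $k$. Therefore all $K_{ij}$ are equal to a single constant $c$.

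\emph{Step 3: conclusion.} Steps 1 and 2 determine every component of $R$ in this basis. They coincide with the components of $c$ times the constant-curvature tensor $\langle X,Z\rangle\langle Y,W\rangle-\langle X,W\rangle\langle Y,Z\rangle$, with the sign matched to the convention. Since both tensors are multilinear and agree on a basis, they are equal, so $R$ is of constant-curvature type. If $R$ is moreover an algebraic Weyl tensor, then it is totally trace-free. But the Ricci contraction of a constant-curvature tensor is $(n-1)c\,\langle\cdot,\cdot\rangle$, so $c=0$ and hence $R=0$. No genuine obstacle is expected. The only care needed is the sign bookkeeping in Step 2 and the verification in Step 1 that every component with three distinct indices can be brought into the form $R(X,Y,\omega,Z)$ with $\omega$ orthogonal to $X$, $Y$ and $Z$.
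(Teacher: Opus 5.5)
Your proof is correct. Your Steps 1 and 3 coincide with the paper's: every component in which some index occurs exactly once vanishes, $R$ is then compared with the constant-curvature tensor, and the Ricci trace kills $c$ in the Weyl case.

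The real difference is in Step 2.

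\emph{The paper's route.} It fixes a single unit vector $\omega$ with all coordinates $\omega_i\neq0$. Using the diagonal form of $R$, it writes $R(X,Y,\omega,\cdot)=\langle q,\cdot\rangle$. It then introduces the auxiliary vectors $u^{(k)}=\sum_{i\neq k}a_{ik}\omega_ie_i$ and the subspace $E_k=\omega^\perp\cap e_k^\perp$, and shows that $u^{(k)}\in\operatorname{span}\{\omega,e_k\}$. Hence $a_{ik}$ does not depend on $i$, and the symmetry $a_{ij}=a_{ji}$ makes all of them equal.

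\emph{Your route.} You tilt the normal by $45^\circ$ inside the $e_ie_j$-plane and evaluate one component, which gives $K_{ki}=K_{kj}$ in one line. You then connect arbitrary coordinate planes by a chain.

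\emph{Comparison.} Your argument is shorter and more transparent. It shows exactly which rotated normal detects a difference of sectional curvatures, and it needs no linear algebra beyond a bilinear expansion. The paper's argument shows something slightly sharper: the coordinate directions plus one generic direction $\omega$ already suffice. So it uses the hypothesis for only $n+1$ normals, rather than one tilted normal for each coordinate plane in your chain.
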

	
	\begin{proof}
		Write $R_{ijkl}:=R(e_i,e_j,e_k,e_l)$ with respect to an orthonormal
		basis $\{e_1,\ldots,e_n\}$ of $V$. Taking $\omega=e_r$ in the
		hypothesis gives $R_{ijrk}=0$ whenever $i,j,k\neq r$. By the curvature
		symmetries, any component for which one index occurs exactly once can
		be rewritten, up to sign, in this form. Hence $R_{ijkl}=0$ whenever
		one of the indices $i,j,k,l$ occurs exactly once.
		
		For $i\neq j$, let $a_{ij}:=R_{ijij}=a_{ji}$. The preceding observation
		shows that the only possibly nonzero components are those determined by
		the $a_{ij}$'s. Therefore, writing $X=\sum_i x_i e_i,Y=\sum_i y_i e_i,Z=\sum_i z_i e_i$ and $W=\sum_i w_i e_i$, we have
		\begin{align}
			\label{eq:BY-radial-Weyl-detection-diagonal-form}
			R(X,Y,Z,W)
			=
			\sum_{i<j}
			a_{ij}
			(x_i y_j-x_j y_i)
			(z_i w_j-z_j w_i).
		\end{align}
		
		Choose a unit vector $\omega=\sum_i\omega_i e_i$ with $\omega_i\neq0$
		for every $i$. For $X,Y\in\omega^\perp$, define
		$q=\sum_k q_k e_k$, where $q_k:=\sum_{i\neq k}a_{ik}\omega_i(x_i y_k-x_k y_i)$. It follows from
		\eqref{eq:BY-radial-Weyl-detection-diagonal-form} that
		\begin{align*}
			R(X,Y,\omega,Z)=\langle q,Z\rangle
		\end{align*}
		holds for every $Z\in V$. The hypothesis
		implies that $q\perp\omega^\perp$, whereas
		$\langle q,\omega\rangle=R(X,Y,\omega,\omega)=0$. Hence $q=0$, namely,
		\begin{align}\label{eq:qk-identity}
			\sum_{i\neq k}
			a_{ik}\omega_i
			(x_i y_k-x_k y_i)
			=
			0
		\end{align}
		for every $k$ and all $X,Y\in\omega^\perp$.
		
		Fix $k$, and set $u^{(k)}:=\sum_{i\neq k}a_{ik}\omega_i e_i$. Since the
		$k$-th component of $u^{(k)}$ vanishes, the preceding identity \eqref{eq:qk-identity} becomes
		\begin{align*}
			y_k\langle u^{(k)},X\rangle-x_k\langle u^{(k)},Y\rangle=0.
		\end{align*}
		Let
		$E_k:=\omega^\perp\cap e_k^\perp$, and take
		$Y=e_k-\omega_k\omega$. Then $Y\in\omega^\perp$ and
		$y_k=1-\omega_k^2\neq0$. Thus, for $X\in E_k$, we obtain
		$\langle u^{(k)},X\rangle=0$. Consequently,
		$u^{(k)}\in E_k^\perp=\operatorname{span}\{\omega,e_k\}$.
		
		Because the $k$-th component of $u^{(k)}$ is zero, there is a constant
		$\alpha_k$ such that $u^{(k)}=\alpha_k(\omega-\omega_k e_k)$. Comparing
		the $i$-th components for $i\neq k$ gives
		$a_{ik}\omega_i=\alpha_k\omega_i$. Moreover, $\omega_i\neq0$, it follows
		that $a_{ik}=\alpha_k$ for every $i\neq k$. If $i\neq j$, then $a_{ij}=\alpha_j$ and
		$a_{ji}=\alpha_i$, hence all the $\alpha_k$ agree.
		Thus $a_{ij}=c$ for some constant $c$ and every $i\neq j$. Substituting
		this into \eqref{eq:BY-radial-Weyl-detection-diagonal-form} yields
		\begin{align*}
			R(X,Y,Z,W)
			&=
			c\sum_{i<j}
			(x_i y_j-x_j y_i)
			(z_i w_j-z_j w_i) \\
			&=
			c\bigl(
			\langle X,Z\rangle\langle Y,W\rangle
			-
			\langle X,W\rangle\langle Y,Z\rangle
			\bigr).
		\end{align*}
		Hence $R$ is of constant-curvature type.
		
		If $R$ is an algebraic Weyl tensor, then its Ricci contraction vanishes.
		On the other hand, the Ricci contraction of the tensor above is a
		nonzero multiple of $c\,g$. Therefore $c=0$, and hence $R=0$.
	\end{proof}

	\begin{proof}[Proof of Proposition~\ref{prop:BY-Weyl-obstruction-LW}]
		By Lemma~\ref{lem:BY-radial-Weyl-detection}, there exist
		$\omega_0\in\Sph^3$ and
		$X_0,Y_0,Z_0\in T_{\omega_0}\Sph^3$ such that
		\begin{align*}
			\Weyl_\infty(X_0,Y_0,\omega_0,Z_0)\neq0.
		\end{align*}
		Evaluating \eqref{eq:BY-prep-Weyl-Codazzi-defect} at these fixed vectors
		shows that $\mathcal C(\hat\sigma_r,\hat A_r)$ is nonzero for all
		sufficiently large $r$.  Hence the Codazzi equation fails, and the
		Li--Weinstein criterion rules out local isometric embeddability into
		$\R^4$.
	\end{proof}
	
	\subsection{Brown--York type quasilocal energy}
	
	Since isometric embeddings may fail to exist, we next consider the issues that must be addressed when attempting to define a Brown--York type mass in our geometric setting, namely how to choose the reference term once this classical
	reference-hypersurface picture is no longer available.
	
	Recall that, in the Brown--York construction \cite{BrownYork}, the reference term is
	required to depend only on the boundary metric. We now introduce a straightforward construction of a reference term that fulfills this requirement.
	
	Let $\Sigma$ be a smooth closed $3$-manifold. We call
	$(\Sigma,\sigma_\Sigma,A_\Sigma)$ a \emph{base point} if $\sigma_\Sigma$ is a
	Riemannian metric on $\Sigma$, $A_\Sigma$ is a $\sigma_\Sigma$-self-adjoint
	endomorphism of $T\Sigma$, and
	\begin{align}
		\Ric_{\sigma_\Sigma}^{\sharp}
		=
		(\tr A_\Sigma)A_\Sigma
		-
		A_\Sigma^2 .
	\end{align}
	Assume in addition that $A_\Sigma$ is nondegenerate, namely
	$\det A_\Sigma(p)\neq 0$ for every $p\in\Sigma$. Then the contracted Gauss
	equation is stable under small perturbations of the metric. More precisely, for
	every metric $\sigma$ sufficiently close to $\sigma_\Sigma$ in $C^{2,\alpha}$, the Banach-space Implicit Function Theorem yields that there exists a unique endomorphism
	$A_0[\sigma;\sigma_\Sigma,A_\Sigma]$, close to $A_\Sigma$ in $C^{0,\alpha}$,
	such that
	\begin{align}\label{eq:contracted-gauss-general-reference}
		\Ric_{\sigma}^{\sharp}
		=
		(\tr A_0[\sigma;\sigma_\Sigma,A_\Sigma])
		A_0[\sigma;\sigma_\Sigma,A_\Sigma]
		-
		A_0[\sigma;\sigma_\Sigma,A_\Sigma]^2 .
	\end{align}
	The solution depends $C^1$ on $\sigma$ and satisfies
	$A_0[\sigma_\Sigma;\sigma_\Sigma,A_\Sigma]=A_\Sigma$.
	
	After shrinking the neighborhood if necessary, the solution is
	$\sigma$-self-adjoint. If $A$ solves \eqref{eq:contracted-gauss-general-reference},
	then its $\sigma$-adjoint also solves the same equation. By the local uniqueness,
	one has $A=A^{*_\sigma}$.
	
	We also record that the choice of $A_\Sigma$ includes a choice of branch. Indeed,
	the contracted Gauss equation is invariant under $A\mapsto -A$; hence
	$(\Sigma,\sigma_\Sigma,A_\Sigma)$ and
	$(\Sigma,\sigma_\Sigma,-A_\Sigma)$ determine two local branches, with opposite
	reference mean curvatures near $\sigma_\Sigma$. Once the base point is fixed,
	there is no ambiguity within that branch.
	
	In summary, the Implicit Function Theorem enables us to define the reference mean curvature associated with the chosen base point by
	\begin{align}\label{eq:H0-general}
		H_0[\sigma;\sigma_\Sigma,A_\Sigma]
		:=
		\tr A_0[\sigma;\sigma_\Sigma,A_\Sigma].
	\end{align}
	When a flat reference embedding exists with shape operator in the local solution
	class selected by $(\sigma_\Sigma,A_\Sigma)$, \eqref{eq:H0-general} agrees with
	the classical reference mean curvature. The construction, however, remains
	defined without requiring flat embeddability.
	
	Now we may define the Brown--York type mass. Let $(N^4,g)$ be a Riemannian
	$4$-manifold, and let $\Sigma=\partial D\subset N$ be a smooth closed hypersurface
	bounding a relatively compact domain $D$. Suppose that the induced metric
	$\sigma=g|_\Sigma$ is sufficiently close to $\sigma_\Sigma$ in $C^{2,\alpha}$, so
	that \eqref{eq:H0-general} is defined. If $H_g(\Sigma)$ denotes the physics mean curvature of $\Sigma$ in $(N,g)$ with respect to the outward unit normal of $D$, set
	\begin{align}
		m_{\BY}(\Sigma;\sigma_\Sigma,A_\Sigma)
		:=
		\frac{1}{3\omega_3}
		\int_{\Sigma}
		\bigl(
		H_0[\sigma;\sigma_\Sigma,A_\Sigma]
		-
		H_g(\Sigma)
		\bigr)\,d\mu_\sigma,
		\quad
		\omega_3=|\Sph^3|=2\pi^2 .
	\end{align}
	
	\medskip
	
	The construction above is local in the chosen nondegenerate solution
	$(\sigma_\Sigma,A_\Sigma)$. Thus the base point is part of the reference data:
	different choices may give different nearby solutions, and hence different
	reference mean curvatures. In this general form,
	$H_0[\sigma;\sigma_\Sigma,A_\Sigma]$ is therefore not determined by $\sigma$
	alone.
	
	For the large hypersurfaces considered below, there is a canonical choice of
	branch. When the boundary metric has positive sectional curvature, the positive
	definite solution of the contracted Gauss equation is unique and can be written
	explicitly in terms of the intrinsic curvature of the boundary metric.
	
	Specifically, let $(\Sigma^3,\sigma)$ be a Riemannian $3$-manifold, and $\sigma$ has positive sectional curvature at every point. Define the endomorphism
	\begin{align}\label{eq:B-sigma-def}
		B_\sigma
		:=
		\frac12\Scal_\sigma \Id
		-
		\Ric_\sigma^\sharp .
	\end{align}
	Denote the sectional curvature of the $2$-plane spanned by $X,Y$ by $K(X,Y)$. In dimension $3$, if
	$\{e_1,e_2,e_3\}$ is a $\sigma$-orthonormal basis diagonalizing
	$\Ric_\sigma^\sharp$ at a point, then the eigenvalues of $B_\sigma$ at that point are $K(e_2,e_3), K(e_1,e_3)$ and $K(e_1,e_2)$. This implies that $B_\sigma$ is positive definite. In this case we define
	\begin{align}\label{eq:A-plus-def}
		A_0^+[\sigma]
		:=
		\sqrt{\det B_\sigma}\,B_\sigma^{-1},
	\end{align}
	where the determinant is taken for $B_\sigma$ as an endomorphism of $T\Sigma$.
	We also define
	\begin{align}
		H_0^+[\sigma]
		:=
		\tr A_0^+[\sigma].
	\end{align}
	
	\begin{proposition}\label{prop:positive-reference-branch}
		$A_0^+[\sigma]$ is the unique positive definite endomorphism of $T\Sigma$
		satisfying
		\begin{align}\label{eq:positive-branch-contracted-gauss}
			\Ric_\sigma^\sharp
			=
			(\tr A_0^+[\sigma])A_0^+[\sigma]
			-
			(A_0^+[\sigma])^2 .
		\end{align}
		In particular, $H_0^+[\sigma]$ is determined only by $\sigma$ and does not
		depend on a choice of base point.
	\end{proposition}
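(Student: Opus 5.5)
The argument is pointwise linear algebra on each tangent space $T_p\Sigma$, which is a $3$-dimensional inner product space with $\sigma_p$. We first rewrite the contracted Gauss equation in terms of $B_\sigma$, and then solve the rewritten equation explicitly.

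\textbf{Reformulation.} For a $\sigma$-self-adjoint $A$ on a $3$-dimensional space, Cayley--Hamilton gives
\begin{align*}
	A^3-(\tr A)A^2+\sigma_2(A)A-\det A\,\Id=0,
\end{align*}
where $\sigma_2(A)=\frac12\bigl((\tr A)^2-\tr A^2\bigr)$. Set $\operatorname{Ric}=(\tr A)A-A^2$. Then $\tr\operatorname{Ric}=2\sigma_2(A)$, and therefore
\begin{align*}
	\tfrac12(\tr\operatorname{Ric})\Id-\operatorname{Ric}=\sigma_2(A)\Id-(\tr A)A+A^2 .
\end{align*}
If $A$ is invertible, Cayley--Hamilton shows that the right-hand side equals $(\det A)A^{-1}$, the adjugate of $A$. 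So $A$ solves \eqref{eq:positive-branch-contracted-gauss} if and only if $B_\sigma=(\det A)A^{-1}$. The converse direction holds because $\Ric^\sharp_\sigma=\frac12\Scal_\sigma\Id-B_\sigma$ and $\Scal_\sigma=\tr\Ric_\sigma^\sharp$. Concretely, if $B_\sigma=\operatorname{adj}(A)$, then $\tr B_\sigma=\sigma_2(A)$ and $\Scal_\sigma=\tr\Ric^\sharp_\sigma=3\cdot\frac12\Scal_\sigma-\tr B_\sigma$. This forces $\Scal_\sigma=2\sigma_2(A)$, and hence $\Ric^\sharp_\sigma=\sigma_2(A)\Id-\operatorname{adj}(A)=(\tr A)A-A^2$.

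\textbf{Existence.} Put $A=A_0^+[\sigma]=\sqrt{\det B_\sigma}\,B_\sigma^{-1}$. Since $B_\sigma$ is positive definite (its eigenvalues are sectional curvatures), $A$ is self-adjoint and positive definite. In dimension $3$,
\begin{align*}
	\det A=(\det B_\sigma)^{3/2}(\det B_\sigma)^{-1}=(\det B_\sigma)^{1/2}.
\end{align*}
Hence $(\det A)A^{-1}=B_\sigma$, and by the reformulation $A$ solves \eqref{eq:positive-branch-contracted-gauss}.

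\textbf{Uniqueness.} Let $A$ be any positive definite solution. Then $A$ is invertible and $B_\sigma=(\det A)A^{-1}$. Taking determinants gives $\det B_\sigma=(\det A)^3(\det A)^{-1}=(\det A)^2$. Since $\det A>0$, this yields $\det A=\sqrt{\det B_\sigma}$. Consequently $A=(\det A)B_\sigma^{-1}=\sqrt{\det B_\sigma}\,B_\sigma^{-1}$, as claimed.

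\textbf{Remarks on the hypotheses.} If ``positive definite'' is read as positive with respect to $\sigma$, then the self-adjointness needed for the Cayley--Hamilton manipulations is automatic. Invertibility of $A$ is what makes the adjugate identity available; without it, degenerate solutions could in principle occur. The only other point requiring care is the sign choice $\det A=+\sqrt{\det B_\sigma}$, which is fixed by positivity. The proof shows $A_0^+[\sigma]$ is built pointwise from $\Ric_\sigma$ and $\Scal_\sigma$ alone. Therefore $H_0^+[\sigma]$ depends only on $\sigma$ and not on any base point. Moreover, by the local uniqueness of the implicit-function construction, it coincides with $H_0[\sigma;\sigma_\Sigma,A_\Sigma]$ whenever the base point $A_\Sigma$ is positive and $\sigma$ is nearby.
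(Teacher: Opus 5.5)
Your proof is correct. It rests on the same algebraic fact as the paper's proof, organized invariantly rather than in eigenbases.

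The paper diagonalizes $B_\sigma$ and checks $a_i(a_j+a_k)=\lambda_j+\lambda_k$ for existence. For uniqueness, it notes that $A$ commutes with $\Ric_\sigma^\sharp$ (a polynomial in $A$), diagonalizes $A$ and $B_\sigma$ simultaneously, and reads off $B_\sigma=\diag(k_2k_3,k_1k_3,k_1k_2)$. That diagonal matrix is exactly $\operatorname{adj}(A)$. So your Cayley--Hamilton identity $B_\sigma=\operatorname{adj}(A)$ is the basis-free form of the paper's computation, and it gives existence and uniqueness at once.

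What your route buys is that nothing in it needs $A$ to be self-adjoint. The trace identity $\tr\bigl((\tr A)A-A^2\bigr)=2\sigma_2(A)$, Cayley--Hamilton, and the adjugate formula hold for every endomorphism. So, contrary to your closing remark, no self-adjointness is "needed for the Cayley--Hamilton manipulations". It is used only to see that $A_0^+[\sigma]$ itself is self-adjoint. Your uniqueness therefore holds among all endomorphisms with $\det A>0$, whereas the paper's simultaneous-diagonalization step requires $A$ to be $\sigma$-self-adjoint.

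Your caveat about degenerate solutions is also unnecessary. Both $\operatorname{adj}(A)=A^2-(\tr A)A+\sigma_2(A)\Id$ and $\det\operatorname{adj}(A)=(\det A)^2$ hold without invertibility. Hence $\det B_\sigma>0$ forces $\det A\neq0$, and the full solution set of the contracted Gauss equation is exactly $\{\pm A_0^+[\sigma]\}$. This sharpens the paper's remark about the two branches related by $A\mapsto -A$.
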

	
	\begin{proof}
		The statement is pointwise. Fix $p\in\Sigma$ and choose a
		$\sigma$-orthonormal basis in which
		\begin{align*}
			B_\sigma
			=
			\diag(\lambda_1,\lambda_2,\lambda_3),
			\qquad
			\lambda_i>0 .
		\end{align*}
		Set $D:=\det B_\sigma=\lambda_1\lambda_2\lambda_3$. Then
		\begin{align*}
			A_0^+[\sigma]
			=
			\sqrt D\,B_\sigma^{-1}
			=
			\diag\left(
			\frac{\sqrt D}{\lambda_1},
			\frac{\sqrt D}{\lambda_2},
			\frac{\sqrt D}{\lambda_3}
			\right).
		\end{align*}
		Write $a_i:=\tfrac{\sqrt D}{\lambda_i}$. The $i$-th eigenvalue of
		$(\tr_\sigma A_0^+)A_0^+-(A_0^+)^2$ is
		\begin{align*}
			a_i(a_j+a_k)
			=
			\frac{\sqrt D}{\lambda_i}
			\left(
			\frac{\sqrt D}{\lambda_j}
			+
			\frac{\sqrt D}{\lambda_k}
			\right)
			=
			\lambda_j+\lambda_k,
		\end{align*}
		where $\{i,j,k\}=\{1,2,3\}$. On the other hand, the $i$-th eigenvalue of $\Ric_\sigma^\sharp$ is also
		$\lambda_j+\lambda_k$. This proves \eqref{eq:positive-branch-contracted-gauss}.
		
		It remains to establish uniqueness within the class of positive definite solutions. Suppose $A$ is another positive definite solution satisfying the contracted Gauss equation \eqref{eq:positive-branch-contracted-gauss}. Then the endomorphism $\Ric_\sigma^\sharp$ is a polynomial in $A$; consequently, $A$ commutes with $\Ric_\sigma^\sharp$ and with $B_\sigma=\frac12\Scal_\sigma\Id-\Ric_\sigma^\sharp$. Since both operators are $\sigma$-self-adjoint, they are simultaneously diagonalizable. Thus, in a $\sigma$-orthonormal basis, we may write $A=\diag(k_1,k_2,k_3)$, where $k_i>0$. Then the contracted Gauss equation gives
		\begin{align*}
			\Ric_\sigma^\sharp
			=
			\diag\bigl(k_1(k_2+k_3),\,k_2(k_1+k_3),\,k_3(k_1+k_2)\bigr).
		\end{align*}
		Hence
		\begin{align*}
			B_\sigma
			=
			\diag(k_2k_3,k_1k_3,k_1k_2).
		\end{align*}
		It follows that $\det B_\sigma=(k_1k_2k_3)^2$ and
		\begin{align*}
			\sqrt{\det B_\sigma}\,B_\sigma^{-1}
			=
			\diag(k_1,k_2,k_3)=A .
		\end{align*}
		Thus every positive definite solution equals
		$\sqrt{\det B_\sigma}\,B_\sigma^{-1}$, and uniqueness follows.
	\end{proof}
	
	\begin{remark}
		The formula \eqref{eq:A-plus-def} is the intrinsic three-dimensional
		version of the explicit inverse formula for
		$A\mapsto(\tr A)A-A^2$ in Li--Weinstein \cite{LiWeinstein}. In their
		notation, the inverse is written in terms of the eigenvalues of
		$\Ric_\sigma^\sharp$.
	\end{remark}
	
	\medskip
	
	Thus, for a boundary metric with positive sectional curvature, the reference mean
	curvature can be defined directly from the metric itself, without first choosing
	a base point. This leads to the following base-point-free version of the
	Brown--York type mass.
	
	\begin{definition}
		Let $(N^4,g)$ be a Riemannian $4$-manifold, and let
		$\Sigma=\partial D\subset N$ be a smooth closed hypersurface bounding a
		relatively compact domain $D$. Let $\sigma=g|_\Sigma$ be the induced metric,
		and assume that $\sigma$ has positive sectional curvature. We define the
		positive reference mean curvature by
		\begin{align}
			H_0^+[\sigma]
			:=
			\tr
			\left(
			\sqrt{\det B_\sigma}\,B_\sigma^{-1}
			\right),
			\qquad
			B_\sigma
			=
			\frac12\Scal_\sigma\Id-\Ric_\sigma^\sharp.
		\end{align}
		If $H_g(\Sigma)$ denotes the physical mean curvature of $\Sigma$ in $(N,g)$ with
		respect to the outward unit normal of $D$, we define the Brown--York type mass as
		\begin{align}\label{eq:BY-positive-branch}
			m_{\BY}(\Sigma)
			:=
			\frac{1}{3\omega_3}
			\int_\Sigma
			\bigl(
			H_0^+[\sigma]-H_g(\Sigma)
			\bigr)\,d\mu_\sigma,
		\end{align}
		where $\omega_3=|\Sph^3|=2\pi^2$, and $d\mu_\sigma$ denotes the area element.
	\end{definition}
	
	The definition above is also close in spirit to the asymptotically flat
	holographic renormalization of Mann--Marolf \cite{MannMarolf}. In their
	Lorentzian setting, the reference subtraction is chosen as a local and covariant
	counterterm of the boundary geometry, rather than through an auxiliary embedding
	of the boundary into a flat reference spacetime. In the present Riemannian setting, the contracted Gauss equation plays the
	same role: it determines $H_0^+[\sigma]$ from $\sigma$ itself whenever the positive
	branch is defined. Thus the subtraction term in
	\eqref{eq:BY-positive-branch} is intrinsic to the boundary geometry.
	
	\section{The ADM limit problem on large hypersurfaces}
	\label{sec:BY-large-hypersurfaces}
	
	We now turn to the large-boundary limit of the Brown--York type mass on the
	chosen AF end. For a sequence of smooth closed hypersurfaces $\{\Sigma_a\}$ escaping to infinity, we ask when
	\begin{align}
		\lim_{a\to\infty} m_{\BY}(\Sigma_a)
		=
		m_{\ADM}(g).
	\end{align}
	The point is not only to verify convergence along a particular exhaustion, but
	to understand how the limiting value depends on the shape of the large
	hypersurfaces and the geometry of the AF end at infinity.
	
	Having fixed the asymptotically flat coordinate chart, we shall work with the following coordinate-adapted class of large hypersurfaces in $\R^4\setminus B_R$. The assumptions are imposed to make the ADM-limit computation manageable.
	
	\begin{assumption}\label{ass:large-hypersurfaces}
		Let $\{\Sigma_a\}_{a\geq 1}$ be a sequence of smooth, closed, connected hypersurfaces contained in $\R^4\setminus B_R$, each enclosing $B_R$. Equivalently, $\Sigma_a$ and $\partial B_R$ bound a compact region in $\overline{\R^4\setminus B_R}$. Set
		$\rho_a:=\inf_{\Sigma_a}|x|$. We assume the following.
		
		\begin{enumerate}[label=\textup{(\arabic*)}]
			\item \emph{Escape to infinity and radius control.}
			One has $\rho_a\to\infty$ as $a\to\infty$. Moreover, there exists a
			constant $C_{\mathrm{rad}}>0$, independent of $a$, such that
			\begin{align}
				\sup_{\Sigma_a}|x|
				\leq
				C_{\mathrm{rad}}\rho_a .
			\end{align}
			
			\item \emph{Uniform Euclidean convexity at scale $\rho_a$.}
			Let $\kappa_1^E,\kappa_2^E,\kappa_3^E$ be the Euclidean principal
			curvatures of $\Sigma_a$ with respect to the outward Euclidean
			unit normal. There exist constants
			$0<c_{\mathrm{curv}}<C_{\mathrm{curv}}<\infty$, independent of $a$, such that
			\begin{align}\label{eq:large-hypersurface-curvature-control}
				\frac{c_{\mathrm{curv}}}{\rho_a}
				\leq
				\kappa_i^E
				\leq
				\frac{C_{\mathrm{curv}}}{\rho_a},
				\qquad
				i=1,2,3 .
			\end{align}
		\end{enumerate}
	\end{assumption}
	
	Let $\gamma_a:=\gE|_{\Sigma_a}$ and $\sigma_a:=g|_{\Sigma_a}$. The metric $\gamma_a$ is induced
	on this coordinate hypersurface by the Euclidean background $\gE$, whereas
	$\sigma_a$ is the physical metric induced by $g$. Thus, $(\Sigma_a,\sigma_a)$ and $(\Sigma_a,\gamma_a)$ represent the same coordinate hypersurface but equipped with different induced metrics. We denote by
	$K_a^E$ and $A_a^E$ the second fundamental form and the shape operator of the hypersurface $\Sigma_a$ with respect to $\gE$ and the outward Euclidean unit normal. 
	
	By \eqref{eq:large-hypersurface-curvature-control}, the hypersurface
	$\Sigma_a\subset(\R^4\setminus B_R,\gE)$ is strictly convex at scale
	$\rho_a$. Hence the Gauss equation for this Euclidean embedding implies that
	$\gamma_a$ has positive sectional curvature. Therefore, for the metric
	$\gamma_a$, the contracted Gauss equation admits a unique positive definite
	solution $A_0^+[\gamma_a]$ in view of
	Proposition~\ref{prop:positive-reference-branch}. Since the Euclidean shape operator $A_a^E$ is itself positive definite and satisfies this equation, uniqueness gives \begin{align} A_0^+[\gamma_a]=A_a^E, \qquad H_0^+[\gamma_a]=H_{\gE}(\Sigma_a). \end{align}
	
	Moreover, by the asymptotic flatness of $g$, the induced physical metric
	$\sigma_a$ is, after rescaling by $\rho_a^{-1}$, a small
	$C^{2,\alpha}$ perturbation of $\gamma_a$. Hence, for all sufficiently large $a$, the positive solution of
	\eqref{eq:positive-branch-contracted-gauss} remains defined at $\sigma_a$.
	Thus the Brown--York type mass
	\begin{align}\label{eq:BY-general-hypersurface-def}
		m_{\BY}(\Sigma_a)
		:=
		\frac{1}{3\omega_3}
		\int_{\Sigma_a}
		\bigl(
		H_0^+[\sigma_a]-H_g(\Sigma_a)
		\bigr)\,d\mu_{\sigma_a}
	\end{align}
	is well-defined. We shall use \eqref{eq:BY-general-hypersurface-def} to compute
	the limit of the Brown--York type mass for the family of closed hypersurfaces
	$\{\Sigma_a\}_{a\geq 1}$.

	\subsection{Asymptotic expansion of the Brown--York type mass for large hypersurfaces}\label{subsec:large-surface-By-mass}
	
	We derive the asymptotic expansion of the Brown--York type mass by viewing
	the difference between the reference and physical mean curvatures as a
	functional of the ambient metric and linearizing it at the Euclidean
	background. For each $a$, let $\mathcal U_a$ denote the set of smooth
	Riemannian metrics $\bar g$ defined on a neighborhood of $\Sigma_a$ for
	which the induced metric $\bar\sigma_a:=\bar g|_{\Sigma_a}$ has positive sectional curvature. By the uniform Euclidean convexity of
	$\Sigma_a$, the Euclidean metric $\gE$ belongs to $\mathcal U_a$. Moreover,
	by asymptotic flatness and the uniform scaled geometry of $\Sigma_a$, the
	physical metric $g$ also belongs to $\mathcal U_a$ for all sufficiently
	large $a$.
	
	For
	$\bar g\in\mathcal U_a$, define
	\begin{align}
		\mathcal Q_a(\bar g)
		:=
		\int_{\Sigma_a}
		\bigl(
		H_0^+[\bar\sigma_a]
		-
		H_{\bar g}(\Sigma_a)
		\bigr)
		\,d\mu_{\bar\sigma_a}.
	\end{align}
	Then, whenever $g\in\mathcal U_a$, one has
	\begin{align*}
		m_{\BY}(\Sigma_a)
		=
		\frac{1}{3\omega_3}\mathcal Q_a(g).
	\end{align*}
	Actually, for all sufficiently large $a$, both $g$ and
	the path $g_t:=\gE+t(g-\gE)$, where $0\leq t\leq 1$, belong to $\mathcal U_a$. Therefore $\mathcal Q_a(g)$ can be computed by a
	Taylor expansion of $\mathcal Q_a$ at $\gE$ along this path. Furthermore, since a Euclidean hypersurface has the same physical and reference mean
	curvature, we have $\mathcal Q_a(\gE)=0$. Thus the leading contribution to $m_{\BY}(\Sigma_a)$ comes from the first
	variation of $\mathcal Q_a$ at $\gE$.
	
	Before deriving the variation formula, we introduce the notation used below.
	Let $(\Sigma,\gamma)$ be a closed $3$-manifold with positive sectional
	curvature. Recall that
	\begin{align*}
		B_\gamma
		:=
		\frac12\Scal_\gamma\Id-\Ric_\gamma^\sharp,
		\quad
		A_0^+[\gamma]
		:=
		\sqrt{\det B_\gamma}\,B_\gamma^{-1},
		\quad H_0^+[\gamma]
		:=
		\tr A_0^+[\gamma].
	\end{align*}
	At the Euclidean induced metric $\gamma_a$, we abbreviate
	\begin{align*}
		A_0:=A_0^+[\gamma_a],
		\qquad
		H_0:=H_0^+[\gamma_a],
		\qquad
		B:=B_{\gamma_a},
	\end{align*}
	and define
	\begin{align*}
		P
		:=
		\frac12H_0B^{-1}-B^{-1}A_0,
		\qquad
		p:=\tr P.
	\end{align*}
	Since $B$ and $A_0$ are commuting self-adjoint endomorphisms, $P$ is
	self-adjoint. Using $\gamma_a$, we identify $P$ with a symmetric
	$(2,0)$-tensor, whose components are denoted by $P^{\alpha\beta}$.
	
	In what follows, Greek indices
	$\alpha,\beta,\gamma,\delta\in\{1,2,3\}$ denote tangential indices on
	$\Sigma_a$, and all covariant derivatives, traces, Laplacians, and curvature
	tensors are taken with respect to $\gamma_a$. For a symmetric $(2,0)$-tensor
	$P$, define
	\begin{align*}
		\mathcal L_{\gamma_a}(P)^{\alpha\beta}
		:={}&
		\frac12\nabla^\alpha\nabla^\beta p
		-
		\frac12\gamma_a^{\alpha\beta}\Delta p
		+
		\frac12\Delta P^{\alpha\beta}
		+
		\frac12\gamma_a^{\alpha\beta}
		\nabla_\delta\nabla_\gamma P^{\gamma\delta}
		\\
		&-
		\frac12\nabla^\alpha\nabla_\delta P^{\delta\beta}
		-
		\frac12\nabla^\beta\nabla_\delta P^{\alpha\delta},
	\end{align*}
	and
	\begin{align}\label{eq:BY-def-EP}
		\mathcal E_{\gamma_a}(P)^{\alpha\beta}
		:=
		\mathcal L_{\gamma_a}(P)^{\alpha\beta}
		-
		\frac12p\,\Ric_{\gamma_a}^{\alpha\beta}
		+
		P^{\gamma\delta}
		R(\gamma_a)_\gamma{}^\alpha{}_\delta{}^\beta.
	\end{align}
	Finally, set
	\begin{align}\label{eq:BY-def-D}
		\mathfrak D_a^{\alpha\beta}
		:=
		\mathcal E_{\gamma_a}(P)^{\alpha\beta}
		+
		\frac12A_0^{\alpha\beta}.
	\end{align}
	Here $A_0^{\alpha\beta}$ means the $(2,0)$-tensor obtained from the second fundamental form corresponding to the endomorphism $A_0$ by raising both indices with $\gamma_a$. One may verify directly that $\mathfrak D_a$ is symmetric.
	
	We are now ready to state and prove the variation formula.
	
	\begin{lemma}\label{lem:BY-first-variation-general-hypersurface}
		Let $k$ be a smooth symmetric $2$-tensor defined on a neighborhood of $\Sigma_a$ in $\R^4\setminus B_R$, and set $\bar g_t:=\gE+tk$. Then
		\begin{align}\label{eq:BY-first-variation-with-defect-Q}
			(D\mathcal Q_a)_{\gE}(k)
			=
			\frac12
			\int_{\Sigma_a}
			\bigl(
			\partial_j k_{ij}
			-
			\partial_i k_{jj}
			\bigr)(\nu_E)^i\,d\mu_E
			+
			\int_{\Sigma_a}
			\mathfrak D_a^{\alpha\beta}k_{\alpha\beta}\,d\mu_E .
		\end{align}
		Here $k_{ab}$ denotes the tangential restriction of $k$ to $\Sigma_a$, and
		$\nu_E,d\mu_E$ are computed with respect to the Euclidean background.
	\end{lemma}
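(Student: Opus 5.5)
We split $\mathcal Q_a(\bar g_t)=\int_{\Sigma_a}H_0^+[\bar\sigma_t]\,d\mu_{\bar\sigma_t}-\int_{\Sigma_a}H_{\bar g_t}(\Sigma_a)\,d\mu_{\bar\sigma_t}$, where $\bar\sigma_t=\gamma_a+tk^\top$, and differentiate each piece at $t=0$ separately. For the physical piece we use the standard first variation of mean curvature under a change of ambient metric. Writing $\nu=\nu_E$ and $k_{\nu\nu}=k(\nu,\nu)$, one has
\begin{align*}
\frac{d}{dt}\Big|_{t=0}H_{\bar g_t}
=-\langle K_a^E,k\rangle_{\gamma_a}-\frac12H_{\gE}k_{\nu\nu}
+\frac12\nu(\tr_{\gE}k)-\operatorname{div}_{\gamma_a}(k(\nu,\cdot)^\top)-\frac12\nu(k_{\nu\nu}),
\end{align*}
with sign conventions to be fixed carefully. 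Adding $H_{\gE}\cdot\frac12\tr_{\gamma_a}k$ from the variation of $d\mu$ and integrating, the divergence term drops because $\Sigma_a$ is closed. The remaining normal-derivative and $k_{\nu\nu}$ terms then recombine into $-\frac12\int(\partial_jk_{ij}-\partial_ik_{jj})\nu^i$ plus tangential terms. This is the classical computation behind the Brown--York/ADM identification, as in Fan--Shi--Tam, and we rewrite $\partial_jk_{ij}\nu^i$ via the tangential divergence and the $K^E_a$ terms. After this step the physical piece contributes $\frac12\int(\partial_jk_{ij}-\partial_ik_{jj})\nu^i\,d\mu_E$, up to a purely tangential remainder of the form $\int(\tfrac12 A_0^{\alpha\beta}-\tfrac12H_0\gamma_a^{\alpha\beta}+\ldots)k_{\alpha\beta}$.

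For the reference piece everything is intrinsic to $(\Sigma_a,\gamma_a)$. Differentiating $A_0^+=\sqrt{\det B}\,B^{-1}$ gives
\begin{align*}
\dot H_0=\tr\bigl(\tfrac12\sqrt{\det B}\,\tr(B^{-1}\dot B)B^{-1}-\sqrt{\det B}\,B^{-1}\dot BB^{-1}\bigr).
\end{align*}
Since $B$ commutes with $A_0$ and $\sqrt{\det B}\,B^{-1}=A_0$, this simplifies to $\dot H_0=\tr(P\dot B)$ with $P=\frac12H_0B^{-1}-B^{-1}A_0$. Here $\dot B=\frac12\dot{\Scal}\,\Id-\dot{\Ric}^\sharp$, and we must keep track of the fact that raising an index also varies: $\dot{\Ric}^\sharp=\gamma^{-1}\dot\Ric-k^\sharp\Ric^\sharp$. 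We then insert the standard formulas $\dot\Ric_{\alpha\beta}=\frac12(\nabla^\gamma\nabla_\alpha k_{\beta\gamma}+\nabla^\gamma\nabla_\beta k_{\alpha\gamma}-\Delta k_{\alpha\beta}-\nabla_\alpha\nabla_\beta\tr k)$ and $\dot{\Scal}=\operatorname{div}\operatorname{div}k-\Delta\tr k-\langle\Ric,k\rangle$. Integrating $\int\tr(P\dot B)\,d\mu$ by parts twice moves all derivatives onto $P$, which produces exactly $\mathcal L_{\gamma_a}(P)^{\alpha\beta}k_{\alpha\beta}$. The commutators of covariant derivatives and the $-k^\sharp\Ric^\sharp$ and $\langle\Ric,k\rangle$ terms produce the curvature terms $-\frac12p\Ric^{\alpha\beta}+P^{\gamma\delta}R_\gamma{}^\alpha{}_\delta{}^\beta$, possibly after using the three-dimensional identity expressing $\Rm$ via $\Ric$. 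Finally, the volume variation gives $\frac12H_0\gamma^{\alpha\beta}k_{\alpha\beta}$.

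Summing the two pieces, the $\frac12H_0\gamma^{\alpha\beta}$ contributions from the two volume variations should cancel against the corresponding term in the physical variation, since $H_0=H_{\gE}$ and $A_0=A_a^E$ at $\gE$. What remains is $\frac12A_0^{\alpha\beta}$ together with $\mathcal E_{\gamma_a}(P)$, which is precisely $\mathfrak D_a$. We expect the main obstacle to be bookkeeping rather than analysis. The curvature terms in $\mathcal E$ have to match exactly, including the order of indices in $P^{\gamma\delta}R_\gamma{}^\alpha{}_\delta{}^\beta$ after commuting derivatives in the $\nabla^\gamma\nabla_\alpha k_{\beta\gamma}$ terms. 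The coefficients of $K^E_a$ and $H_{\gE}\gamma_a$ from the ambient variation must combine to leave exactly $\frac12A_0$ and nothing else. We would check the final formula on a round sphere, where $P$ is a multiple of $\gamma_a$, $\mathcal E(P)$ is computable directly, and the Schwarzschild-type radial variation must reproduce the known value.
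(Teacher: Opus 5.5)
Your overall route matches the paper's. You split $\mathcal Q_a$ into reference and physical pieces, write $\dot H_0=\tr(P\dot B)$, insert the standard variations of $\Ric$ and $\Scal$ (including the $-k^\sharp\Ric^\sharp$ from varying $\gamma_a^{-1}$), and integrate by parts twice. That half is correct. The derivatives moved onto $P$ must then be commuted into the divergence-first order used in $\mathcal L_{\gamma_a}$. When you do this, the Ricci parts of the commutators cancel the term $P^\beta{}_\alpha k^{\alpha\gamma}\Ric_{\gamma\beta}$, and the Riemann part is exactly $P^{\gamma\delta}R_\gamma{}^\alpha{}_\delta{}^\beta$, so no three-dimensional identity is needed.

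For the physical piece the paper does not vary $H$ by hand. It quotes the Einstein--Hilbert plus Gibbons--Hawking--York first variation at the Ricci-flat metric $\gE$. There $(D\Scal)_{\gE}(k)=\partial_i(\partial_jk_{ij}-\partial_ik_{jj})$ produces the flux, and the boundary term directly supplies $\frac12(H_0\gamma_a^{\alpha\beta}-A_0^{\alpha\beta})$. A direct computation is equally valid, but the formula you display for $\dot H$ is wrong. With $\omega:=k(\nu_E,\cdot)^\top$, $k_{\nu\nu}:=k(\nu_E,\nu_E)$ and the term $-\frac12H_{\gE}k_{\nu\nu}$, the correct variation is
\begin{align*}
\dot H=\tfrac12\gamma_a^{\alpha\beta}(\nabla_{\nu_E}k)_{\alpha\beta}-\operatorname{div}_{\gamma_a}\omega-\tfrac12H_{\gE}k_{\nu\nu},
\end{align*}
with no $-\langle K_a^E,k\rangle$ term. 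That term, produced by varying $\gamma_a^{-1}$, cancels once $\gamma_a^{\alpha\beta}(\nabla_\alpha k)(e_\beta,\nu_E)$ is rewritten as $\operatorname{div}_{\gamma_a}\omega+H_{\gE}k_{\nu\nu}-\langle K_a^E,k\rangle$. For example, take $k=c(\gE-d|x|\otimes d|x|)$ on a coordinate sphere $S_r$: the mean curvature does not change, but your formula gives $\dot H=-3c/r$.

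This matters because this step alone fixes the coefficient of $A_0$ in $\mathfrak D_a$. Use the identity $(\partial_jk_{ij}-\partial_ik_{jj})(\nu_E)^i=\operatorname{div}_{\gamma_a}\omega+H_{\gE}k_{\nu\nu}-\langle K_a^E,k\rangle-\gamma_a^{\alpha\beta}(\nabla_{\nu_E}k)_{\alpha\beta}$. With the correct formula this yields
\begin{align*}
-\int_{\Sigma_a}\dot H\,d\mu_E=\frac12\int_{\Sigma_a}\bigl(\partial_jk_{ij}-\partial_ik_{jj}\bigr)(\nu_E)^i\,d\mu_E+\frac12\int_{\Sigma_a}A_0^{\alpha\beta}k_{\alpha\beta}\,d\mu_E
\end{align*}
with nothing left over, so your ``$+\ldots$'' is empty. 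Your displayed formula would instead produce $\frac32A_0$, i.e.\ $\mathfrak D_a=\mathcal E_{\gamma_a}(P)+\frac32A_0$. On coordinate spheres this equals $r^{-1}\gamma_r^{\alpha\beta}$. That contradicts the scaling variation $k=2\gE$, for which $\mathcal Q_a(\gE+tk)\equiv0$ and the flux vanishes.

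The area bookkeeping is also simpler than you state. The two volume variations $\pm\frac12H_{\gE}\tr_{\gamma_a}k$ cancel each other because $H_0=H_{\gE}$, and there is no third $\frac12H_0\gamma_a$ term to cancel against. With these corrections your argument goes through and agrees with the paper's.
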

	
	\begin{proof}
		Set $\gamma_{a,t}:=\bar g_t|_{\Sigma_a}$ and $q:=k|_{\Sigma_a}$, then $\gamma_{a,t}
		=
		\gamma_a+tq$. All indices in the following computation are raised and lowered using
		$\gamma_a$.
		
		Differentiating $H_0^+[\gamma]
		=
		\tr
		\left(
		\sqrt{\det B_\gamma}B_\gamma^{-1}
		\right)$ at $\gamma_a$ gives
		\begin{align*}
			\dot H_0
			=
			\frac12p\,\dot{\Scal}
			+
			P^\beta{}_\alpha
			q^{\alpha\gamma}
			\Ric_{\gamma\beta}
			-
			P^{\gamma\beta}
			\dot\Ric_{\gamma\beta}.
		\end{align*}
		Substituting the standard first variation formula for scalar and Ricci curvature, and integrating by parts twice on the closed hypersurface $\Sigma_a$, yields
		\begin{align}\label{eq:BY-reference-variation-with-E}
			\left.
			\frac{d}{dt}
			\right|_{t=0}
			\int_{\Sigma_a}
			H_0^+[\gamma_{a,t}]\,d\mu_{\gamma_{a,t}}
			=
			\int_{\Sigma_a}
			\left(
			\mathcal E_{\gamma_a}(P)^{\alpha\beta}
			+
			\frac12H_0\gamma_a^{\alpha\beta}
			\right)
			k_{\alpha\beta}\,d\mu_E .
		\end{align}
		
		We next consider the physical mean-curvature term. Using the standard first variation formula for the Einstein--Hilbert functional with the Gibbons--Hawking--York boundary
		term, as in \cite[(2.12)--(2.13)]{Anderson13}, one obtains
		\begin{align}\label{eq:BY-physical-H-variation-correct}
			\begin{split}
				\left.
				\frac{d}{dt}
				\right|_{t=0}
				\int_{\Sigma_a}
				H_{\bar g_t}(\Sigma_a)\,d\mu_{\gamma_{a,t}}
				&=
				\frac12
				\int_{\Sigma_a}
				\left(
				H_0\gamma_a^{\alpha\beta}
				-
				A_0^{\alpha\beta}
				\right)
				k_{\alpha\beta}\,d\mu_E
				\\
				&\quad
				-
				\frac12
				\int_{\Sigma_a}
				\bigl(
				\partial_jk_{ij}
				-
				\partial_i k_{jj}
				\bigr)
				(\nu_E)^i\,d\mu_E .
			\end{split}
		\end{align}
		Since the background metric is Euclidean, the linearization of the scalar curvature is a divergence; integrating it over the region enclosed by $\Sigma_a$ and applying the divergence theorem yields the displayed boundary integral. Subtracting
		\eqref{eq:BY-physical-H-variation-correct} from
		\eqref{eq:BY-reference-variation-with-E} gives \eqref{eq:BY-first-variation-with-defect-Q}.
	\end{proof}

	\begin{remark}
		If the first variation of the Brown--York type functional $\mathcal Q_a$ were given only by
		the ADM boundary integral (i.e., the first term on the right-hand side of \eqref{eq:BY-first-variation-with-defect-Q}), then $\mathfrak D_a$ would vanish. This cancellation holds for round coordinate
		spheres, but it does not hold for a general convex hypersurface. Thus
		$\mathfrak D_a$ detects the extra
		contribution coming from the non-round geometry of the hypersurface.
	\end{remark}
	
	\medskip
	
	The tensor $\mathfrak D_a$ has a useful interpretation: it measures the part of
	the first variation of $\mathcal Q_a$ at the Euclidean metric which is not
	explained by the Euclidean embedding of $\Sigma_a$. To make this precise, recall
	the linearized metric change produced by moving the Euclidean hypersurface
	$\Sigma_a\subset\R^4$. If the variation vector field is written as
	$Z=Y+f\nu_E$, where $Y$ is tangent to $\Sigma_a$ and $f$ is a function, then the
	induced Euclidean metric changes by
	\begin{align}\label{eq:BY-linearized-embedding-map}
		\mathscr B_a(Y,f)
		=
		\Lie_Y\gamma_a+2fK_a^E .
	\end{align}
	Here $K_a^E$ is the Euclidean second fundamental form.
	
	Since a Euclidean hypersurface has the same reference and physical mean
	curvature, the Brown--York difference vanishes identically along such Euclidean
	deformations. Moreover, the ADM boundary term in
	\eqref{eq:BY-first-variation-with-defect-Q} also vanishes for a pure Euclidean
	diffeomorphism variation $k=\Lie_Z\gE$, because
	$(DR)_{\gE}(\Lie_Z\gE)=0$. Therefore the remaining defect term must be
	orthogonal to all variations of the form
	\eqref{eq:BY-linearized-embedding-map}. This gives the following cokernel
	condition for $\mathfrak D_a$.
	
	\begin{lemma}\label{lem:BY-defect-cokernel}
		The tensor $\mathfrak D_a$ satisfies
		\begin{align}\label{eq:BY-defect-cokernel-equations}
			\nabla_a\mathfrak D_a^{ab}=0,
			\qquad
			K^E_{a,ab}\mathfrak D_a^{ab}=0 .
		\end{align}
		Equivalently,
		\begin{align}\label{eq:integral-orthogonality}
			\int_{\Sigma_a}
			\mathfrak D_a^{ab}
			\bigl(\Lie_Y\gamma_a+2fK_a^E\bigr)_{ab}\,d\mu_E
			=
			0
		\end{align}
		for every tangent vector field $Y$ and every function $f$ on $\Sigma_a$.
	\end{lemma}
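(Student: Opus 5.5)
The plan is to prove the integral identity \eqref{eq:integral-orthogonality} first and then read off the pointwise equations \eqref{eq:BY-defect-cokernel-equations} by integration by parts. The equivalence of the two forms is routine. Since $\mathfrak D_a$ is symmetric,
\begin{align*}
\int_{\Sigma_a}\mathfrak D_a^{\alpha\beta}(\Lie_Y\gamma_a)_{\alpha\beta}\,d\mu_E
=
-2\int_{\Sigma_a}(\nabla_\alpha\mathfrak D_a^{\alpha\beta})Y_\beta\,d\mu_E .
\end{align*}
Taking $Y=0$ with $f$ arbitrary isolates $K^E_{a,\alpha\beta}\mathfrak D_a^{\alpha\beta}=0$. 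Taking $f=0$ with $Y$ arbitrary isolates the divergence-free condition. Conversely, the pointwise equations give the integral identity immediately.

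To establish \eqref{eq:integral-orthogonality}, I would apply Lemma~\ref{lem:BY-first-variation-general-hypersurface} to the ambient tensor $k=\Lie_Z\gE$. Here $Z$ is any smooth vector field on a neighborhood of $\Sigma_a$ that restricts along $\Sigma_a$ to $Y+f\nu_E$; one can, for example, extend $Y$ and $f$ constantly in the normal direction and cut off. There are three things to check.

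\emph{The left-hand side vanishes.} Let $\phi_t$ be the flow of $Z$. Then $\phi_t^*\gE$ is a flat metric, and it agrees with $\gE+t\Lie_Z\gE$ to first order. By naturality, $\mathcal Q_a(\phi_t^*\gE)$ equals the Brown--York integrand evaluated on the Euclidean hypersurface $\phi_t(\Sigma_a)$. For small $t$ this hypersurface is still strictly convex, so $A_0^+$ of its induced metric is its Euclidean shape operator, by Proposition~\ref{prop:positive-reference-branch}. Hence $\mathcal Q_a(\phi_t^*\gE)=0$ for all small $t$. Since $\mathcal Q_a$ is $C^1$ along these paths, its derivative at $\gE$ depends only on the first-order term, and therefore $(D\mathcal Q_a)_{\gE}(\Lie_Z\gE)=0$.

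\emph{The ADM boundary term vanishes.} The integrand $\partial_jk_{ij}-\partial_ik_{jj}$, contracted with $\nu_E$, is exactly the boundary flux whose divergence is the linearized scalar curvature $(D\Scal)_{\gE}(k)$. For $k=\Lie_Z\gE=\partial_iZ_j+\partial_jZ_i$, one computes
\begin{align*}
\partial_jk_{ij}-\partial_ik_{jj}=\Delta Z_i+\partial_i\partial_jZ_j-2\partial_i\partial_jZ_j=\partial_j(\partial_jZ_i-\partial_iZ_j).
\end{align*}
This is the divergence of an antisymmetric tensor, so its flux through the closed hypersurface $\Sigma_a$ vanishes.

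\emph{The restriction is the embedding variation.} The tangential restriction of $\Lie_Z\gE$ to $\Sigma_a$ is the classical first variation of the induced metric, namely $\Lie_Y\gamma_a+2fK_a^E$. This is \eqref{eq:BY-linearized-embedding-map}, with the sign convention that $K_a^E$ is taken with respect to the outward normal.

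Combining these three facts in \eqref{eq:BY-first-variation-with-defect-Q} yields \eqref{eq:integral-orthogonality}.

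The main obstacle is the first step, which needs care on two points. First, $D\mathcal Q_a$ must be a genuine derivative on a space containing both $\Lie_Z\gE$ and the curves $\phi_t^*\gE$. This holds because $H_0^+$ is a smooth function of the curvature of $\bar\sigma_a$ wherever $B$ is positive definite, and positive definiteness is an open condition. Second, the branch must persist: the perturbed Euclidean hypersurface must remain strictly convex, so that the reference term continues to equal the physical mean curvature. Both points follow from the compactness of $\Sigma_a$ and the strict convexity in Assumption~\ref{ass:large-hypersurfaces}.

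If one wanted a purely local check instead, the divergence identity could be verified directly from \eqref{eq:BY-def-EP}. The operator $\mathcal L_{\gamma_a}$ is the adjoint of the linearized Einstein-type operator, and the contracted Bianchi and Codazzi identities would then be used. I would rely on the variational argument above and avoid this computation.
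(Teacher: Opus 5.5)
Your proposal is correct and follows essentially the same route as the paper. You apply Lemma~\ref{lem:BY-first-variation-general-hypersurface} to $k=\Lie_Z\gE$ with $Z|_{\Sigma_a}=Y+f\nu_E$, use that the Brown--York difference vanishes identically along the Euclidean deformations $\phi_t(\Sigma_a)$, kill the ADM flux term, identify $k|_{T\Sigma_a}=\Lie_Y\gamma_a+2fK_a^E$, and read off the pointwise equations by taking $Y=0$ or $f=0$. The only real difference is in the flux step: you write $\partial_jk_{ij}-\partial_ik_{jj}=\partial_j(\partial_jZ_i-\partial_iZ_j)$ and use that this is an exact flux on a neighborhood of $\Sigma_a$, whereas the paper uses $(D\Scal)_{\gE}(\Lie_Z\gE)=0$ together with the divergence theorem on the enclosed region (hence its collar-support remark). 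Your version is slightly cleaner, since it never extends $Z$ into the interior, but the two are equivalent.
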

	
	\begin{proof}
		We first prove the integral orthogonality \eqref{eq:integral-orthogonality}. Along $\Sigma_a$, write
		$Z=Y+f\nu_E$ as above, and extend $Z$ smoothly to a neighbourhood of $\Sigma_a$ in the
		Euclidean end. Let $\Phi_t$ be the local flow of $Z$ and set
		$k=\Lie_Z\gE$. Then
		\begin{align*}
			\Phi_t^*\gE=\gE+t\Lie_Z\gE+O(t^2),
		\end{align*}
		so this variation is
		generated by an ambient Euclidean diffeomorphism.
		
		For this special variation, the Brown--York type difference has zero first
		variation. Indeed, $\Sigma_a$ with the metric $(\Phi_t^*\gE)|_{\Sigma_a}$ is
		isometric, through $\Phi_t$, to the Euclidean hypersurface
		$\Phi_t(\Sigma_a)\subset(\mathbb R^4,\gE)$. Hence the physical mean curvature
		of $\Sigma_a$ in $(\mathbb R^4,\Phi_t^*\gE)$ is the pull-back of the Euclidean
		mean curvature of $\Phi_t(\Sigma_a)$. The Euclidean shape operator of
		$\Phi_t(\Sigma_a)$ solves the contracted Gauss equation and, for small $t$, is
		the chosen local solution defining $H_0^+$ near
		$(\Sigma_a,\gamma_a,K_a^E)$. Therefore
		\begin{align}
			H_0^+\bigl[(\Phi_t^*\gE)|_{\Sigma_a}\bigr]
			=
			H_{\Phi_t^*\gE}(\Sigma_a)
		\end{align}
		for all sufficiently small $t$. Consequently,
		\begin{align}\label{eq:BY-defect-proof-zero-first-variation}
			\left.
			\frac{d}{dt}
			\right|_{t=0}
			\int_{\Sigma_a}
			\Bigl(
			H_0^+\bigl[(\Phi_t^*\gE)|_{\Sigma_a}\bigr]
			-
			H_{\Phi_t^*\gE}(\Sigma_a)
			\Bigr)
			\,d\mu_{(\Phi_t^*\gE)|_{\Sigma_a}}
			=
			0.
		\end{align}
		
		We also need the ADM boundary integral, namely, the first term on the right-hand side of \eqref{eq:BY-first-variation-with-defect-Q}, to vanish for
		$k=\Lie_Z\gE$. Since scalar curvature is natural under diffeomorphisms, then $(DR)_{\gE}(k)$ vanishes. On the other
		hand, in Euclidean coordinates,
		$(DR)_{\gE}(k)$ is given by $\partial_i(\partial_jk_{ij}-\partial_i k_{jj})$. Applying the
		divergence theorem to the region enclosed by $\Sigma_a$ gives
		\begin{align}\label{eq:BY-defect-proof-zero-ADM-boundary}
			\int_{\Sigma_a}
			\bigl(
			\partial_j k_{ij}
			-
			\partial_i k_{jj}
			\bigr)(\nu_E)^i\,d\mu_E
			=
			0 .
		\end{align}
		If there are other boundary components in the chosen exterior region, we choose
		the extension of $Z$ to be supported in a collar of $\Sigma_a$, so that no
		extra boundary contribution appears.
		
		Now apply \eqref{eq:BY-first-variation-with-defect-Q} to
		$k=\Lie_Z\gE$. Combining
		\eqref{eq:BY-defect-proof-zero-first-variation} with
		\eqref{eq:BY-defect-proof-zero-ADM-boundary}, we obtain
		\begin{align*}
			\int_{\Sigma_a}\mathfrak D_a^{ab}k_{ab}\,d\mu_E=0 .
		\end{align*}
		It remains only to identify $k_{ab}$. For tangent vector fields $X_1,X_2$ on
		$\Sigma_a$, using $Z=Y+f\nu_E$, one has
		\begin{align*}
			(\Lie_Z\gE)(X_1,X_2)
			&=
			(\Lie_Y\gamma_a)(X_1,X_2)
			+
			2fK_a^E(X_1,X_2).
		\end{align*}
		Thus $k_{ab}$ can be expressed as $(\Lie_Y\gamma_a+2fK_a^E)_{ab}$, and hence
		\begin{align}\label{eq:BY-defect-proof-integral-orthogonality}
			\int_{\Sigma_a}
			\mathfrak D_a^{ab}
			\bigl(
			\Lie_Y\gamma_a+2fK_a^E
			\bigr)_{ab}
			\,d\mu_E
			=
			0
		\end{align}
		for every $Y$ and $f$. This proves \eqref{eq:integral-orthogonality}. Next, the two pointwise equations in \eqref{eq:BY-defect-cokernel-equations} are also direct consequences, which can be derived by setting $f$ and $Y$ in \eqref{eq:BY-defect-proof-integral-orthogonality} to vanish, respectively.
	\end{proof}
	
	\medskip
	
	We now return to the main task of this section, namely to determine when
	the Brown--York type mass of the family $\{\Sigma_a\}$ in
	Assumption~\ref{ass:large-hypersurfaces} converges to the ADM mass. We need to derive the asymptotic expansion for its Brown--York type mass. Lemma~\ref{lem:BY-first-variation-general-hypersurface} gives
	the linear contribution at the Euclidean metric, while
	Lemma~\ref{lem:BY-defect-cokernel} identifies the orthogonality
	relations satisfied by the tensor $\mathfrak D_a$ which will later be
	used to estimate the resulting tangential term.
	
	Before applying the first-variation formula \eqref{eq:BY-first-variation-with-defect-Q}, we first justify a
	uniform Taylor expansion of $\mathcal Q_a$ along the affine path from
	$\gE$ to $g$. Set
	\begin{align*}
		h:=g-\gE,
		\qquad
		g_t:=\gE+t h,
		\qquad
		t\in[0,1].
	\end{align*}
	The following proposition shows that, for all sufficiently large $a$,
	the whole path ${g_t},t\in[0,1]$ remains in $\mathcal U_a$ (recall $\mathcal U_a$ is the set of smooth
	Riemannian metrics $\bar g$ defined on a neighborhood of $\Sigma_a$ for
	which the induced metric $\bar\sigma_a:=\bar g|_{\Sigma_a}$ has positive sectional curvature) and
	that the quadratic Taylor remainder is uniformly negligible.
	
	\begin{proposition}\label{prop:BY-uniform-quadratic-remainder}
		Let $h:=g-\gE$ and $g_t:=\gE+t h$, $t\in[0,1]$. For a family of smooth closed hypersurfaces $\{\Sigma_a\}$ satisfying Assumption~\ref{ass:large-hypersurfaces}, one has
		$g_t\in\mathcal U_a$ for all $t\in [0,1]$ and all sufficiently large $a$. Moreover, there exists a constant $C>0$, independent of $a$ and
		$t$, such that
		\begin{align}\label{eq:BY-uniform-quadratic-remainder}
			\sup_{t\in[0,1]}
			\left|
			\frac{d^2}{dt^2}\mathcal Q_a(g_t)
			\right|
			\leq
			C\rho_a^{2-2q}.
		\end{align}
		Consequently,
		\begin{align}\label{eq:BY-Qa-Taylor-without-regularity}
			\mathcal Q_a(g)
			=
			(D\mathcal Q_a)_{\gE}(h)
			+
			\mathcal O(\rho_a^{2-2q}).
		\end{align}
	\end{proposition}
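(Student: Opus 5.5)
The plan is to reduce everything to a scale-invariant statement by rescaling by $\rho_a$. Let $\lambda_a(x):=\rho_a x$ and set $\hat\Sigma_a:=\rho_a^{-1}\Sigma_a$, $\hat g_t:=\rho_a^{-2}\lambda_a^*g_t=\gE+t\hat h$ with $\hat h:=\rho_a^{-2}\lambda_a^*h$. By Assumption~\ref{ass:large-hypersurfaces}, $\hat\Sigma_a$ lies in the annulus $1\le|x|\le C_{\mathrm{rad}}$ and has Euclidean principal curvatures in $[c_{\mathrm{curv}},C_{\mathrm{curv}}]$. The AF decay gives
\begin{align*}
\|\hat h\|_{C^2(\{1/2\le|x|\le 2C_{\mathrm{rad}}\})}\le C\rho_a^{-q}.
\end{align*}
Under this scaling, $H\,d\mu$ on a hypersurface of dimension three scales like length squared, and so does $H_0^+[\sigma]\,d\mu_\sigma$, because $A_0^+$ scales like inverse length. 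Hence $\mathcal Q_a(g_t)=\rho_a^{2}\,\hat{\mathcal Q}_a(\hat g_t)$, where $\hat{\mathcal Q}_a$ is the same functional computed on $\hat\Sigma_a$. It therefore suffices to show two things: $\hat g_t|_{\hat\Sigma_a}$ has positive sectional curvature, and $|\frac{d^2}{dt^2}\hat{\mathcal Q}_a(\hat g_t)|\le C\|\hat h\|_{C^2}^2\le C\rho_a^{-2q}$, with $C$ uniform in $a$ and $t$.

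For membership in $\mathcal U_a$, I would use the Gauss equation. The Euclidean induced metric $\hat\gamma_a$ has sectional curvatures $\kappa_i\kappa_j\ge c_{\mathrm{curv}}^2$. The curvature of $\hat\sigma_{a,t}=\hat\gamma_a+t\hat h|_{\hat\Sigma_a}$ differs from that of $\hat\gamma_a$ by $O(\|\hat h\|_{C^2(\hat\Sigma_a)})$, with constants depending only on bounds for $\hat\gamma_a$ and its curvature. The tangential restriction involves derivatives of $\hat h$ along $\hat\Sigma_a$, and the second fundamental form is uniformly bounded. So for $a$ large the sectional curvature stays at least $c_{\mathrm{curv}}^2/2$ for every $t\in[0,1]$. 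This also gives the uniform positive lower bound on $B_{\hat\sigma_{a,t}}$ needed below.

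For the second-derivative bound, write the integrand as a smooth function of the jets. The quantity $H_0^+[\sigma]\,d\mu_\sigma$ is a smooth algebraic expression of $(\sigma,\partial\sigma,\partial^2\sigma)$ on the open set where $B_\sigma>0$: it involves $\sqrt{\det B}$, $B^{-1}$, and $\sqrt{\det\sigma}$. Likewise $H_{\bar g}\,d\mu$ is a smooth expression of $(\bar g,\partial\bar g)$ along $\hat\Sigma_a$ together with the fixed Euclidean embedding data. Along the affine path these jets are affine in $t$, so $\frac{d^2}{dt^2}$ of each integrand is a quadratic form in the $2$-jet of $\hat h$. Its coefficients are bounded uniformly, because the jets range over a compact set where $B\ge c>0$. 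Integrating over $\hat\Sigma_a$ then gives the bound $C\rho_a^{-2q}$. The needed uniformity is that $\mathrm{Vol}(\hat\Sigma_a)$ and the local graph geometry of $\hat\Sigma_a$ are controlled independently of $a$. Volume control follows from convexity: a convex hypersurface inside a ball of radius $C_{\mathrm{rad}}$ has area at most that of the sphere. Local control uses the curvature bound, which makes $\hat\Sigma_a$ a union of graphs over balls of uniform radius with bounded $C^2$ norms.

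Multiplying back by $\rho_a^2$ gives the bound $C\rho_a^{2-2q}$ in \eqref{eq:BY-uniform-quadratic-remainder}. Taylor's formula with integral remainder on $[0,1]$, using $\mathcal Q_a(\gE)=0$, then gives \eqref{eq:BY-Qa-Taylor-without-regularity}.

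The main obstacle is the uniformity of the local coordinate description of $\hat\Sigma_a$. The curvature bounds control only second derivatives of the embedding, whereas $H_0^+$ involves the intrinsic curvature of $\hat\sigma_{a,t}$. That requires $\hat h|_{\hat\Sigma_a}$ in $C^2$, which brings in the second fundamental form of $\hat\Sigma_a$ via the chain rule, but not its derivatives. If third derivatives of the embedding were needed, I would avoid them by working covariantly. I would express $\frac{d}{dt}\mathrm{Ric}$ and $\frac{d}{dt}\mathrm{R}$ through the ambient Euclidean covariant derivatives of $\hat h$ and the Gauss formula, so that only $K^E$, and not $\nabla K^E$, enters the coefficients. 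Alternatively, I would integrate by parts as in Lemma~\ref{lem:BY-first-variation-general-hypersurface}, moving derivatives onto $\hat h$.
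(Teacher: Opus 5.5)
You flagged the obstacle yourself: Assumption~\ref{ass:large-hypersurfaces} bounds $K^E$ but not $\nabla K^E$. Neither of your remedies closes it, so the central step has a genuine gap. That step is the uniform control of $B_{\hat\sigma_{a,t}}$ and of its first two $t$-derivatives. $\nabla K^E$ really is uncontrolled: a ripple on a round sphere that is small in $C^2$ but large in $C^3$ is allowed. In a graph chart $x_4=u(x')$, the $2$-jet of $\hat\gamma_a$ contains $D^3u\cdot Du$. Even at the center, where $Du=0$, the $2$-jet of $\hat h|_{\hat\Sigma_a}$ contains $\hat h_{4i}\,\partial_j\partial_k\partial_l u$. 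So the jets do not range over a compact set. Covariantly, the Gauss formula gives $\nabla^{\hat\gamma_a}q=(\nabla^E\hat h)^\top+K^E\ast\hat h(\nu_E,\cdot)$ for $q:=\hat h|_{\hat\Sigma_a}$, so $(\nabla^{\hat\gamma_a})^2q$ contains $(\nabla K^E)\ast\hat h(\nu_E,\cdot)$.

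Hence your $\mathcal U_a$ step, which bounds the curvature change by the $C^2$ size of the restriction using only $|K^E|$, is unjustified as stated. The compactness argument for $\frac{d^2}{dt^2}(H_0^+\,d\mu)$ fails for the same reason. The $\nabla K^E$ terms do cancel in the curvature combination, but only by the Codazzi equation (total symmetry of $\nabla K^E$ in flat space), which you never invoke. Your assertion that ``only $K^E$ enters'' is exactly what needs proof. It is needed for $\partial_t^2$ at every $t\in[0,1]$, not just for $\frac{d}{dt}\Ric$. Integration by parts cannot help. Putting more derivatives on $\hat h$ exceeds the $C^2$ control from asymptotic flatness. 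Moving them off $\hat h$ puts them on coefficients built from $B_{\hat\sigma_{a,t}}$, hence on $\nabla K^E$ and worse.

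The missing idea, used in the paper, is to apply the Gauss equation to $\Sigma_a\subset(\R^4,g_t)$ itself. Then $B_{a,t}$ is a universal algebraic expression in $\sigma_{a,t}$, $\sigma_{a,t}^{-1}$, the second fundamental form $K_{a,t}$ of $\Sigma_a$ with respect to $g_t$, and the tangential part of $\Rm(g_t)$. $K_{a,t}$ depends only on $K_a^E$, $\nu_E$, $h$ and $\partial h$, and $\Rm(g_t)$ only on $h,\partial h,\partial^2h$. So no derivative of $K_a^E$ appears, and one gets $|B_{a,t}-B_{a,0}|+|\partial_tB_{a,t}|\le C\rho_a^{-2-q}$ and $|\partial_t^2B_{a,t}|\le C\rho_a^{-2-2q}$. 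Combined with $B_{a,0}=\diag(\kappa_2^E\kappa_3^E,\kappa_1^E\kappa_3^E,\kappa_1^E\kappa_2^E)$, this gives both $g_t\in\mathcal U_a$ and $|\partial_t^2H_0^+[\sigma_{a,t}]|\le C\rho_a^{-1-2q}$.

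With that step replaced, the rest of your argument goes through. The rescaling identity, the mean-curvature term, and the Taylor step are fine. Your area bound via monotonicity of area for nested convex bodies is a valid substitute for the paper's Gauss-map identity $\omega_3=\int_{\Sigma_a}\det A_a^E\,d\mu_{\gamma_a}$.
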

	
	\begin{proof}
		Set $\gamma_a:=\gE|_{\Sigma_a}$ and $\sigma_{a,t}:=g_t|_{\Sigma_a}$. Since $|x|\geq\rho_a$ on $\Sigma_a$, asymptotic flatness gives
		\begin{align}\label{eq:BY-Taylor-AF-estimates}
			|\partial^\ell h|_{\gE}
			\leq
			C\rho_a^{-q-\ell},
			\qquad
			\ell=0,1,2.
		\end{align}
		In particular, on a neighborhood of $\Sigma_a$ contained in
		$\{|x|\geq \rho_a/2\}$, one has $|h|_{\gE}\leq C\rho_a^{-q}\ll 1$ for all
		sufficiently large $a$. Hence the metrics $g_t$ are Riemannian and uniformly
		equivalent to $\gE$ near $\Sigma_a$, uniformly for $t\in[0,1]$.
		
		Let $K_{a,t}$, $A_{a,t}$, $H_{a,t}$, and $\nu_{a,t}$ denote,
		respectively, the second fundamental form, shape operator, mean curvature,
		and outward unit normal of $\Sigma_a$ with respect to $g_t$. Condition
		\textup{(2)} of Assumption~\ref{ass:large-hypersurfaces} gives
		\begin{align*}
			|K_a^E|_{\gamma_a}
			\leq
			C\rho_a^{-1}.
		\end{align*}
		The standard comparison formulas for unit normals and Levi--Civita
		connections, applied to the fixed hypersurface $\Sigma_a$ and the path
		$g_t=\gE+t h$, give
		\begin{align}\label{eq:BY-Taylor-K-variation}
			|K_{a,t}|_{\gamma_a}
			\leq
			C\rho_a^{-1},
			\quad
			|\partial_tK_{a,t}|_{\gamma_a}
			\leq
			C\rho_a^{-1-q},
			\quad
			|\partial_t^2K_{a,t}|_{\gamma_a}
			\leq
			C\rho_a^{-1-2q}.
		\end{align}
		Since $H_{a,t}=\operatorname{tr}_{\sigma_{a,t}}K_{a,t}$,
		\eqref{eq:BY-Taylor-AF-estimates} and
		\eqref{eq:BY-Taylor-K-variation} imply
		\begin{align}\label{eq:BY-Taylor-physical-H-variation}
			|H_{a,t}|
			\leq
			C\rho_a^{-1},
			\quad
			|\partial_tH_{a,t}|
			\leq
			C\rho_a^{-1-q},
			\quad
			|\partial_t^2H_{a,t}|
			\leq
			C\rho_a^{-1-2q}.
		\end{align}
		Similarly, the ambient curvature of $g_t$ satisfies
		\begin{align}\label{eq:BY-Taylor-ambient-curvature-variation}
			|\operatorname{Rm}(g_t)|_{\gE}
			\leq
			C\rho_a^{-2-q},
			\quad
			|\partial_t\operatorname{Rm}(g_t)|_{\gE}
			\leq
			C\rho_a^{-2-q},
			\quad
			|\partial_t^2\operatorname{Rm}(g_t)|_{\gE}
			\leq
			C\rho_a^{-2-2q}.
		\end{align}
		For the corresponding endomorphism $B_{a,t}$ defined as \eqref{eq:B-sigma-def}, by the Gauss equation, $B_{a,t}$ is a universal algebraic
		expression in $\sigma_{a,t}$, $\sigma_{a,t}^{-1}$, $K_{a,t}$, and the tangential restriction of $\Rm(g_t)$. Hence
		\eqref{eq:BY-Taylor-AF-estimates},
		\eqref{eq:BY-Taylor-K-variation}, and
		\eqref{eq:BY-Taylor-ambient-curvature-variation} imply
		\begin{align}\label{eq:BY-Taylor-Bendomorphism-variation}
			|B_{a,t}-B_{a,0}|_{\gamma_a}
			\leq
			C\rho_a^{-2-q},
			\quad
			|\partial_tB_{a,t}|_{\gamma_a}
			\leq
			C\rho_a^{-2-q},
			\quad
			|\partial_t^2B_{a,t}|_{\gamma_a}
			\leq
			C\rho_a^{-2-2q}.
		\end{align}
		
		At $t=0$, choose a $\gamma_a$-orthonormal frame diagonalizing $A_a^E$.
		Then
		\begin{align*}
			B_{a,0}
			=
			\operatorname{diag}
			\bigl(
			\kappa_2^E\kappa_3^E,
			\kappa_1^E\kappa_3^E,
			\kappa_1^E\kappa_2^E
			\bigr).
		\end{align*}
		Combining condition \textup{(2)} of Assumption~\ref{ass:large-hypersurfaces} with \eqref{eq:BY-Taylor-Bendomorphism-variation}, we obtain, for all sufficiently large $a$,
		\begin{align}\label{eq:BY-Taylor-Btensor-positive}
			c\rho_a^{-2}\gamma_a
			\leq
			B_{a,t}
			\leq
			C\rho_a^{-2}\gamma_a,
			\qquad
			t\in[0,1].
		\end{align}
		Since $\sigma_{a,t}$ and $\gamma_a$ are uniformly equivalent, this is
		equivalent to
		\begin{align*}
			c\rho_a^{-2}\sigma_{a,t}
			\leq
			B_{a,t}
			\leq
			C\rho_a^{-2}\sigma_{a,t}.
		\end{align*}
		Thus $B_{a,t}$ is positive definite as a $\sigma_{a,t}$-self-adjoint
		endomorphism. In dimension three, positivity of $B_{a,t}$ is equivalent to
		positivity of the sectional curvatures of $\sigma_{a,t}$. Hence
		$g_t\in\mathcal U_a$ for all $t\in[0,1]$ and all sufficiently large $a$.
		
		Set $\widetilde B_{a,t}:=\rho_a^2B_{a,t}$. By \eqref{eq:BY-Taylor-Btensor-positive}, the endomorphisms
		$\widetilde B_{a,t}$ remain in a fixed compact subset of the positive cone.
		Moreover,
		\eqref{eq:BY-Taylor-Bendomorphism-variation} gives
		\begin{align*}
			|\partial_t\widetilde B_{a,t}|_{\gamma_a}
			\leq
			C\rho_a^{-q},
			\qquad
			|\partial_t^2\widetilde B_{a,t}|_{\gamma_a}
			\leq
			C\rho_a^{-2q}.
		\end{align*}
		Since
		\begin{align*}
			H_0^+[\sigma_{a,t}]
			&=
			\operatorname{tr}
			\left(
			\sqrt{\det B_{a,t}}\,
			B_{a,t}^{-1}
			\right)
			\\
			&=
			\rho_a^{-1}
			\operatorname{tr}
			\left(
			\sqrt{\det\widetilde B_{a,t}}\,
			\widetilde B_{a,t}^{-1}
			\right),
		\end{align*}
		the smoothness of the map
		$B\mapsto\operatorname{tr}(\sqrt{\det B}\,B^{-1})$ on the positive cone
		implies
		\begin{align}\label{eq:BY-Taylor-reference-H-variation}
			|H_0^+[\sigma_{a,t}]|
			\leq
			C\rho_a^{-1},
			\quad
			|\partial_tH_0^+[\sigma_{a,t}]|
			\leq
			C\rho_a^{-1-q},
			\quad
			|\partial_t^2H_0^+[\sigma_{a,t}]|
			\leq
			C\rho_a^{-1-2q}.
		\end{align}

		Write $d\mu_{\sigma_{a,t}}=
		J_{a,t}\,d\mu_{\gamma_a}$. Since $\sigma_{a,t}=\gamma_a+t h|_{T\Sigma_a}$, we have
		\begin{align}\label{eq:BY-Taylor-area-variation}
			|J_{a,t}|
			\leq
			C,
			\quad
			|\partial_tJ_{a,t}|
			\leq
			C\rho_a^{-q},
			\quad
			|\partial_t^2J_{a,t}|
			\leq
			C\rho_a^{-2q}.
		\end{align}
		It follows from
		\eqref{eq:BY-Taylor-reference-H-variation},
		\eqref{eq:BY-Taylor-physical-H-variation}, and
		\eqref{eq:BY-Taylor-area-variation} that
		\begin{align}\label{eq:BY-Taylor-integrand-variation}
			\left|
			\frac{d^2}{dt^2}
			\left[
			\bigl(H_0^+[\sigma_{a,t}]-H_{a,t}\bigr)J_{a,t}
			\right]
			\right|
			\leq
			C\rho_a^{-1-2q}.
		\end{align}
		
		It remains to bound the Euclidean area of $\Sigma_a$. Since all Euclidean
		principal curvatures are positive, the Hadamard theorem implies that the
		Euclidean Gauss map of $\Sigma_a$ is a diffeomorphism onto $\mathbb S^3$.
		Hence
		\begin{align*}
			\omega_3
			=\int_{\Sigma_a}
			\det A_a^E\,d\mu_{\gamma_a}
			=\int_{\Sigma_a}
			\kappa_1^E\kappa_2^E\kappa_3^E\,d\mu_{\gamma_a}
			\geq c\rho_a^{-3} \Vol_{\gamma_a}(\Sigma_a),
		\end{align*}
		which gives
		\begin{align}\label{eq:BY-Taylor-area-bound}
			\Vol_{\gamma_a}(\Sigma_a)
			\leq
			C\rho_a^3.
		\end{align}
		
		Integrating \eqref{eq:BY-Taylor-integrand-variation} and applying \eqref{eq:BY-Taylor-area-bound} yields \eqref{eq:BY-uniform-quadratic-remainder}.
		
		Finally, $\mathcal Q_a(\gE)=0$, since the Euclidean hypersurface has the
		same physical and reference mean curvature. Taylor's formula gives
		\begin{align*}
			\mathcal Q_a(g)
			=
			(D\mathcal Q_a)_{\gE}(h)
			+
			\int_0^1
			(1-t)
			\frac{d^2}{dt^2}\mathcal Q_a(g_t)\,dt.
		\end{align*}
		Equation \eqref{eq:BY-Qa-Taylor-without-regularity} follows from
		\eqref{eq:BY-uniform-quadratic-remainder}.
	\end{proof}
	
	In particular, the Taylor remainder is $o(1)$ as $a\to\infty$. Applying Lemma~\ref{lem:BY-first-variation-general-hypersurface} and Proposition~\ref{prop:BY-uniform-quadratic-remainder}, we obtain
	\begin{align}\label{eq:BY-large-Taylor}
		\begin{split}
			3\omega_3\,m_{\BY}(\Sigma_a)
			=
			\mathcal Q_a(g)
			&=
			\frac12
			\int_{\Sigma_a}
			\bigl(
			\partial_jh_{ij}
			-
			\partial_ih_{jj}
			\bigr)(\nu_E)^i\,d\mu_E
			\\
			&\quad
			+
			\int_{\Sigma_a}
			\mathfrak D_a^{\alpha\beta}
			h_{\alpha\beta}\,d\mu_E
			+
			\mathcal O(\rho_a^{2-2q}).
		\end{split}
	\end{align}
	Here $h_{\alpha\beta}$ denotes the tangential restriction of $h$ to
	$\Sigma_a$.
	
	We now proceed to evaluate the limit of each term in the expansion \eqref{eq:BY-large-Taylor} as $a \to \infty$. For the first term in this expansion, we have the following result:
	
	\begin{lemma}\label{lem:AF-flux-limit}
		In fact,
		\begin{align}\label{eq:AF-flux-limit}
			\lim_{a\to\infty}
			\frac12
			\int_{\Sigma_a}
			\bigl(
			\partial_jh_{ij}
			-
			\partial_ih_{jj}
			\bigr)
			(\nu_E)^i\,d\mu_E
			=
			3\omega_3m_{\ADM}(g).
		\end{align}
	\end{lemma}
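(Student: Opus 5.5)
The plan is to compare the flux through $\Sigma_a$ with the flux through a large coordinate sphere, using the divergence theorem and the $L^1$ integrability of scalar curvature. Set $U^i:=\partial_jh_{ij}-\partial_ih_{jj}$, viewed as a vector field on $\R^4\setminus B_R$. Since $h=g-\gE$, in the fixed chart we have
\begin{align*}
	\partial_iU^i
	=
	\partial_i\partial_jh_{ij}-\Delta h_{jj}
	=
	\Scal_g+\mathcal O(|\partial h|^2+|h||\partial^2h|)
	=
	\Scal_g+\mathcal O(|x|^{-2-2q}),
\end{align*}
which is the standard expansion of scalar curvature at the Euclidean metric. Because $q>1$, the error term is integrable on the end in dimension four: it behaves like $|x|^{-2-2q}$, and $2+2q>4$. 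By hypothesis $\Scal_g\in L^1$, so $\partial_iU^i\in L^1(\R^4\setminus B_R)$. Note also that $dV_g$ and $dx$ are uniformly equivalent on the end, so the hypothesis transfers to Lebesgue measure.

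Next I would choose a radius between the two hypersurfaces. By Assumption~\ref{ass:large-hypersurfaces}(1), $\Sigma_a$ lies in the annulus $\{\rho_a\le|x|\le C_{\mathrm{rad}}\rho_a\}$ and encloses $B_R$. Fix $r>C_{\mathrm{rad}}\rho_a$ and let $\Omega_{a,r}$ be the region between $\Sigma_a$ and $\Sph_r$. The divergence theorem with Euclidean outward normals gives
\begin{align*}
	\int_{\Sph_r}U^i(\nu_E)^i\,dS_E-\int_{\Sigma_a}U^i(\nu_E)^i\,d\mu_E
	=
	\int_{\Omega_{a,r}}\partial_iU^i\,dx .
\end{align*}
Since $\Omega_{a,r}\subset\{|x|\ge\rho_a\}$, the right-hand side is bounded by $\int_{\{|x|\ge\rho_a\}}|\partial_iU^i|\,dx$, uniformly in $r$. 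This is the tail of an $L^1$ integral, so it tends to $0$ as $a\to\infty$.

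To conclude, let $r\to\infty$. The sphere term tends to $6\omega_3m_{\ADM}(g)$ by the definition \eqref{eq:four-dimensional-ADM-mass}, with the limit finite under the standing assumptions. Hence
\begin{align*}
	\int_{\Sigma_a}U^i(\nu_E)^i\,d\mu_E
	=
	6\omega_3m_{\ADM}(g)+o(1),
\end{align*}
and multiplying by $\tfrac12$ gives \eqref{eq:AF-flux-limit}.

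The main point needing care is justifying the divergence theorem for the possibly nonround region $\Omega_{a,r}$. This is routine, since $\Sigma_a$ is smooth, closed and embedded, and it bounds a compact region together with $\partial B_R$ by Assumption~\ref{ass:large-hypersurfaces}. One must also make sure the orientations match: the outward normal of $\Omega_{a,r}$ along $\Sigma_a$ is $-\nu_E$. Once that is settled, the only analytic input is the integrability of $\partial_iU^i$ on the end, which follows from $\Scal_g\in L^1$ and $q>1$.
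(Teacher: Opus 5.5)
Your proposal is correct and is essentially the paper's proof. Both arguments apply the Euclidean divergence theorem to $V_i=\partial_jh_{ij}-\partial_ih_{jj}$ and control the bulk term by the $L^1$ tail of $\partial_iV_i=\Scal_g+\mathcal O(|x|^{-2-2q})$. The only difference is cosmetic: you compare $\Sigma_a$ with an outer sphere $\Sph_r$ and let $r\to\infty$ for fixed $a$, whereas the paper compares $\Sigma_a$ with the inner sphere $\Sph_{\rho_a/2}$ and uses the ADM limit along $\rho_a/2\to\infty$.
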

	
	\begin{proof}
		Set $V_i:=\partial_jh_{ij}-\partial_ih_{jj}$. Let $\Omega_a^E\subset\R^4$ be the bounded Euclidean region enclosed by
		$\Sigma_a$. Obviously, one has $B_{\rho_a}\subset\Omega_a^E$.
		
		Set $r_a:=\rho_a/2$. For all sufficiently large $a$, the Euclidean
		divergence theorem applied to $\Omega_a^E\setminus B_{r_a}$ gives
		\begin{align}\label{eq:AF-flux-comparison}
			\int_{\Sigma_a}
			V_i(\nu_E)^i\,d\mu_E
			-
			\int_{\Sph_{r_a}}
			V_i(\nu_E)^i\,d\mu_E
			=
			\int_{\Omega_a^E\setminus B_{r_a}}
			\partial_iV_i\,dx .
		\end{align}
		The scalar-curvature expansion in the asymptotically flat coordinates
		gives
		\begin{align*}
			\partial_iV_i
			=
			R_g
			+
			\mathcal O\bigl(
			|h||\partial^2h|
			+
			|\partial h|^2
			\bigr).
		\end{align*}
		Since $g-\gE\in C^{2,\alpha}_{-q}$ with $q>1$, the error term is of order $\mathcal O(|x|^{-2q-2})$, which is integrable on $\R^4\setminus B_R$. Moreover,
		$R_g\in L^1(M,dV_g)$, and asymptotic flatness implies that $dV_g$ and $dx$ are uniformly equivalent near infinity, hence $\partial_iV_i \in L^1(\R^4\setminus B_R,dx)$. It follows from \eqref{eq:AF-flux-comparison} that
		\begin{align*}
			\left|
			\int_{\Sigma_a}
			V_i(\nu_E)^i\,d\mu_E
			-
			\int_{\Sph_{r_a}}
			V_i(\nu_E)^i\,d\mu_E
			\right|
			\leq
			\int_{\R^4\setminus B_{r_a}}
			|\partial_iV_i|\,dx
			\longrightarrow
			0,
		\end{align*}
		because $r_a\to\infty$. On the other hand, by the definition of the
		four-dimensional ADM mass,
		\begin{align*}
			\lim_{a\to\infty}
			\int_{\Sph_{r_a}}
			V_i(\nu_E)^i\,d\mu_E
			=
			6\omega_3m_{\ADM}(g).
		\end{align*}
		Combining the last two relations proves
		\eqref{eq:AF-flux-limit}.
	\end{proof}
	
	With Lemma~\ref{lem:AF-flux-limit} in hand, in view of expansion \eqref{eq:BY-large-Taylor} for the Brown--York type mass of the large hypersurface family $\{\Sigma_a\}$, the convergence of $m_{\BY}(\Sigma_a)$ to the ADM mass is reduced to
	showing that the correction term $\int_{\Sigma_a}
	\mathfrak D_a^{\alpha\beta}
	h_{\alpha\beta}d\mu_E$ on the right-hand side of \eqref{eq:BY-large-Taylor} tends to zero.
	
	At this stage, summarizing the above computations yields our first main theorem, Theorem~\ref{thm:BY-AF-large-boundary}.
	
	\medskip
	
	We then apply Theorem~\ref{thm:BY-AF-large-boundary} to several natural
	families of hypersurfaces approaching infinity. We begin with the
	coordinate spheres in the fixed asymptotically flat chart. In this
	case, the Euclidean geometry is exactly round, and the correction tensor
	$\mathfrak D_r$ appearing in \eqref{eq:BY-AF-main-expansion} vanishes
	identically. The shape-dependent term therefore disappears, and the
	Brown--York type mass converges to the ADM mass.
	
	\begin{corollary}\label{cor:BY-AF-coordinate-spheres}
		Under the hypotheses of Theorem~\ref{thm:BY-AF-large-boundary}, let
		$S_r=\{x\in\mathbb R^4:|x|=r\}$ be the coordinate sphere in the fixed
		asymptotically flat chart. Then, for all sufficiently large $r$,
		$m_{\BY}(S_r)$ is well-defined and
		\begin{align}
			\lim_{r\to\infty}m_{\BY}(S_r)
			=
			m_{\ADM}(g).
		\end{align}
	\end{corollary}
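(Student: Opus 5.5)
The plan is to apply Theorem~\ref{thm:BY-AF-large-boundary} to the family $\Sigma_r:=S_r$ and to show that the correction tensor $\mathfrak D_r$ vanishes identically. Then the criterion \eqref{eq:BY-AF-main-criterion} holds trivially, and the conclusion follows from \eqref{eq:BY-large-Taylor}, Lemma~\ref{lem:AF-flux-limit} and the remainder bound $\mathcal O(r^{2-2q})\to0$, which uses $q>1$.

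First I check Assumption~\ref{ass:large-hypersurfaces}. For $r>R$, the sphere $S_r$ is closed, connected and encloses $B_R$. It has $\rho_r=r$, so the radius condition holds with $C_{\mathrm{rad}}=1$. All Euclidean principal curvatures equal $1/r$, so \eqref{eq:large-hypersurface-curvature-control} holds with any $c_{\mathrm{curv}}<1<C_{\mathrm{curv}}$. The theorem is stated for a sequence, but the argument applies to any sequence $r_a\to\infty$, which gives the limit in $r$. Well-definedness for large $r$ is part of Theorem~\ref{thm:BY-AF-large-boundary}, via Proposition~\ref{prop:BY-uniform-quadratic-remainder}.

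The main step is computing $\mathfrak D_r$ for the round sphere $\gamma_r=r^2 g_{\Sph^3}$. There $\Ric_{\gamma_r}=2r^{-2}\gamma_r$ and $\Scal=6r^{-2}$, so $B=r^{-2}\Id$. Hence $A_0=r^{-1}\Id$ and $H_0=3r^{-1}$. This gives
\[
P=\tfrac12 H_0B^{-1}-B^{-1}A_0=\tfrac32 r\Id-r\Id=\tfrac12 r\Id,\qquad p=\tfrac32 r .
\]
Since $P$ is a constant multiple of $\gamma_r^{-1}$ and $p$ is constant, every derivative term in $\mathcal L_{\gamma_r}(P)$ vanishes, because $\nabla\gamma_r=0$. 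For the curvature terms, $R_{\gamma\alpha\delta\beta}=r^{-2}(\gamma_{\gamma\delta}\gamma_{\alpha\beta}-\gamma_{\gamma\beta}\gamma_{\alpha\delta})$. This yields
\[
P^{\gamma\delta}R_\gamma{}^\alpha{}_\delta{}^\beta=\tfrac r2\, r^{-2}(3-1)\gamma^{\alpha\beta}=r^{-1}\gamma^{\alpha\beta},\qquad -\tfrac12 p\Ric^{\alpha\beta}=-\tfrac34 r\cdot 2r^{-2}\gamma^{\alpha\beta}=-\tfrac32 r^{-1}\gamma^{\alpha\beta}.
\]
So $\mathcal E_{\gamma_r}(P)=-\tfrac12 r^{-1}\gamma^{\alpha\beta}$. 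Adding $\tfrac12A_0^{\alpha\beta}=\tfrac12 r^{-1}\gamma^{\alpha\beta}$ gives $\mathfrak D_r=0$.

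The only point needing care is the index and sign convention in the term $P^{\gamma\delta}R(\gamma_a)_\gamma{}^\alpha{}_\delta{}^\beta$. As a consistency check I will use Lemma~\ref{lem:BY-defect-cokernel}. For the sphere, $\mathfrak D_r$ is invariant under $O(4)$, so it must equal $c\,\gamma^{\alpha\beta}$. The condition $K^E_{ab}\mathfrak D^{ab}=0$ then forces $3c/r=0$, hence $c=0$, independently of the convention. This symmetry argument alone suffices to prove $\mathfrak D_r\equiv0$, and I would present it as the primary argument, with the explicit computation as confirmation.
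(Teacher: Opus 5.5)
Your proof is correct, and your explicit computation is the paper's proof step for step. The paper checks Assumption~\ref{ass:large-hypersurfaces} the same way. It then computes $B_{\gamma_r}=r^{-2}\Id$, $P=\frac r2\Id$ and $p=\frac{3r}{2}$. It kills the derivative terms because $P$ and $p$ are parallel. It evaluates the two curvature terms as $\frac1r\gamma_r^{\alpha\beta}$ and $-\frac{3}{2r}\gamma_r^{\alpha\beta}$, and cancels the resulting $-\frac{1}{2r}\gamma_r^{\alpha\beta}$ against $\frac12A_0^{\alpha\beta}=\frac1{2r}\gamma_r^{\alpha\beta}$.

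Your proposed primary argument, symmetry plus Lemma~\ref{lem:BY-defect-cokernel}, does not appear in the paper and is a genuinely different route to $\mathfrak D_r\equiv0$. It is valid. The tensor is built from $\gamma_r$ and $K^E_r$, both invariant under $O(4)$, and the isotropy $O(3)$ acts irreducibly on $T_\omega\Sph^3$. Hence $\mathfrak D_r=c\,\gamma_r^{\alpha\beta}$ pointwise, and the trace condition $K^E_{ab}\mathfrak D_r^{ab}=3c/r=0$ forces $c=0$. Constancy of $c$ is not even needed.

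What this buys you is independence from the explicit formula \eqref{eq:BY-def-D} and from curvature sign conventions. It shows that whatever tensor actually appears in the first variation must vanish on round spheres. The cost is that everything then rests on Lemma~\ref{lem:BY-defect-cokernel}, hence on the exactness of the first-variation identity in Lemma~\ref{lem:BY-first-variation-general-hypersurface}, whose proof in the paper is only sketched.

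The paper's direct computation instead verifies vanishing from the definition itself. As you observe, that computation is convention-sensitive: flipping the sign of the $P^{\gamma\delta}R_\gamma{}^\alpha{}_\delta{}^\beta$ term would give $\mathfrak D_r=-\frac{2}{r}\gamma_r^{\alpha\beta}\neq0$. Presenting both, as you propose, is a good cross-check of \eqref{eq:BY-def-D} against the cokernel condition.

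The final step, applying Theorem~\ref{thm:BY-AF-large-boundary} along arbitrary sequences $r_a\to\infty$ with uniform constants, is handled correctly.
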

	
	\begin{proof}
		The family ${S_r}$ satisfies
		Assumption~\ref{ass:large-hypersurfaces}. Indeed, its rescaling by
		$r^{-1}$ is the unit sphere, and all three Euclidean principal
		curvatures are equal to $r^{-1}$.
		
		Let $\gamma_r=\gE|_{S_r}$. The Euclidean shape operator, its mean
		curvature, and the scalar curvature of $\gamma_r$ are
		\begin{align*}
			A_r^E
			=
			\frac{1}{r}\Id,
			\qquad
			H_r^E
			=
			\frac{3}{r},
			\qquad
			\Ric_{\gamma_r}^{\sharp}
			=
			\frac{2}{r^2}\Id,
			\qquad
			R_{\gamma_r}
			=
			\frac{6}{r^2}.
		\end{align*}
		Since $A_r^E$ is positive definite and satisfies the contracted
		Gauss equation, the uniqueness of the positive solution gives
		$A_0^+[\gamma_r]=A_r^E$. It follows that
		\begin{align*}
			B_{\gamma_r}
			&=
			\frac12R_{\gamma_r}\Id
			-
			\Ric_{\gamma_r}^{\sharp}
			=
			\frac{1}{r^2}\Id,
			\\
			P
			&=
			\frac12H_r^E B_{\gamma_r}^{-1}
			-
			B_{\gamma_r}^{-1}A_r^E
			=
			\frac r2\Id.
		\end{align*}
		After identifying $P$ with a symmetric $(2,0)$-tensor by means of
		$\gamma_r$, we therefore have
		$P^{\alpha\beta}=\frac r2\gamma_r^{\alpha\beta}$ and
		$p=\tr_{\gamma_r}P=\frac{3r}{2}$.
		
		Both $P$ and $p$ are parallel. Hence all derivative terms in
		$\mathcal E_{\gamma_r}(P)$ vanish. Using the constant-curvature identity on
		$S_r$, we obtain
		\begin{align*}
			P^{\gamma\delta}
			R(\gamma_r)_{\gamma}{}^{\alpha}{}_{\delta}{}^{\beta}
			&=
			\frac r2\Ric_{\gamma_r}^{\alpha\beta}
			=
			\frac1r\gamma_r^{\alpha\beta},
			\\
			-\frac12p\Ric_{\gamma_r}^{\alpha\beta}
			&=
			-\frac{3}{2r}\gamma_r^{\alpha\beta}.
		\end{align*}
		Consequently,
		\begin{align*}
			\mathcal E_{\gamma_r}(P)^{\alpha\beta}
			=
			-\frac{1}{2r}\gamma_r^{\alpha\beta}.
		\end{align*}
		On the other hand, raising both indices of the Euclidean second
		fundamental form gives
		$A_0^{\alpha\beta}=\frac1r\gamma_r^{\alpha\beta}$. Therefore,
		by \eqref{eq:BY-def-D}, the correction tensor $\mathfrak D_r$ vanishes identically. The conclusion follows directly from
		Theorem~\ref{thm:BY-AF-large-boundary}.
	\end{proof}
	
	\subsection{Nearly round hypersurfaces}\label{subsec:nearly-round-surface-BY-mass}

	The coordinate-sphere calculation shows that the correction tensor
	$\mathfrak D_a$ vanishes when the Euclidean geometry of the boundary is
	exactly round. It is therefore natural to ask whether the same
	large-boundary limit remains valid when the hypersurfaces are only
	approximately round. To formulate such a condition geometrically, we
	use the notion of nearly round surfaces introduced by Shi, Wang, and Wu
	\cite{ShiWangWu} as a model. Their definition concerns
	two-dimensional surfaces in three-dimensional asymptotically flat
	manifolds; below we introduce the corresponding conditions for
	three-dimensional hypersurfaces in the present four-dimensional
	setting.
	
	Fix a point $p\in M$ and set
	\begin{align*}
		r_p(x)
		:=
		d_g(p,x),
		\qquad
		s_a
		:=
		\inf_{x\in\Sigma_a}r_p(x).
	\end{align*}
	Let $\sigma_a=g|_{\Sigma_a}$. We denote by $K_a^g$, $H_a^g$, and
	$\mathring K_a^g$ the second fundamental form, mean curvature, and
	trace-free second fundamental form of $\Sigma_a$ with respect to $g$
	and the outward unit normal.
	
	\begin{definition}[Shi--Wang--Wu type nearly round hypersurfaces]
		\label{def:BY-AF-nearly-round}
		Let $\tau>0$. A family $\{\Sigma_a\}$ of smooth, closed
		hypersurfaces diffeomorphic to $S^3$ is said to be nearly round of
		rate $\tau$ if $s_a\to\infty$ and there exists a constant $C>0$,
		independent of $a$, such that the following conditions hold:
		\begin{enumerate}[label=\textup{(\roman*)}]
			\item The trace-free second fundamental form satisfies
			\begin{align}\label{eq:BY-nearly-round-tracefree}
				\left|\mathring K_a^g\right|_{\sigma_a}
				+
				s_a
				\left|
				\nabla^{\sigma_a}\mathring K_a^g
				\right|_{\sigma_a}
				\leq
				Cs_a^{-1-\tau}.
			\end{align}
			
			\item The inner and outer radii are uniformly comparable:
			\begin{align}\label{eq:BY-nearly-round-radius}
				\sup_{x\in\Sigma_a}r_p(x)
				\leq
				Cs_a+C.
			\end{align}
			
			\item The intrinsic diameter satisfies
			\begin{align}\label{eq:BY-nearly-round-diameter}
				\operatorname{diam}_{\sigma_a}(\Sigma_a)
				\leq
				Cs_a.
			\end{align}
			
			\item The induced volume satisfies
			\begin{align}\label{eq:BY-nearly-round-volume}
				\operatorname{Vol}_{\sigma_a}(\Sigma_a)
				\leq
				Cs_a^3.
			\end{align}
		\end{enumerate}
	\end{definition}
	
	The definition is intrinsic: it involves only the distance function,
	the induced metric, and the second fundamental form computed with
	respect to $g$. It is the dimensional analogue of the definition in
	\cite{ShiWangWu}, with the decay rate $\tau$ kept separate from the
	asymptotic-flatness order $q$. The power $s_a^3$ in
	\eqref{eq:BY-nearly-round-volume} replaces the area growth $r^2$ in
	the original two-dimensional setting.
	
	We first examine how these nearly round conditions are related to
	Assumption~\ref{ass:large-hypersurfaces}. They imply the required radius
	control and, together with the size assumptions, lead to uniform
	Euclidean convexity at the scale of the hypersurfaces. 
	
	\begin{lemma}\label{lem:BY-nearly-round-geometric-consequences}
		Let $\{\Sigma_a\}$ be a nearly round family of rate $\tau>0$.
		Assume, in addition, that each $\Sigma_a$ encloses $B_R$ in the
		fixed asymptotically flat chart. Then Assumption~\ref{ass:large-hypersurfaces} holds for
		all sufficiently large $a$.
	\end{lemma}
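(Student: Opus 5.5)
The plan is to verify the two conditions of Assumption~\ref{ass:large-hypersurfaces} separately. First I compare the intrinsic distance $r_p$ with $|x|$, then show the mean curvature is pinned at $3/s_a$ by a Gauss--Bonnet-type total curvature identity, and finally pass from $g$-curvatures to Euclidean ones using the decay of $h$.

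\emph{Radius control.} Since $g$ is asymptotically flat of order $q>1$, on the end one has $r_p(x)=|x|+\mathcal O(|x|^{1-q})+\mathcal O(1)$, so $r_p$ and $|x|$ differ by a bounded additive constant up to lower order. Hence $\rho_a=\inf_{\Sigma_a}|x|=s_a+\mathcal O(1)\to\infty$. Condition \eqref{eq:BY-nearly-round-radius} then gives $\sup_{\Sigma_a}|x|\le Cs_a+C\le C_{\mathrm{rad}}\rho_a$. This settles condition (1).

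\emph{Convexity with respect to $g$.} By \eqref{eq:BY-nearly-round-tracefree}, $K_a^g=\tfrac13H_a^g\sigma_a+\mathring K_a^g$ with $|\mathring K_a^g|\le Cs_a^{-1-\tau}$. The Codazzi equation in $(M,g)$ reads
\begin{align*}
\tfrac23\nabla H_a^g=\operatorname{div}\mathring K_a^g-\Ric_g(\nu,\cdot)^{\top},
\end{align*}
so $|\nabla H_a^g|\le Cs_a^{-2-\tau}+Cs_a^{-2-q}$. Integrating along $\sigma_a$-geodesics and using the diameter bound \eqref{eq:BY-nearly-round-diameter}, the oscillation of $H_a^g$ over $\Sigma_a$ is at most $Cs_a^{-1-\min\{\tau,q\}}$, so $H_a^g=\bar H_a+o(s_a^{-1})$ for a constant $\bar H_a$. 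To identify $\bar H_a$, I will try the Euclidean identity $\int_{\Sigma}\det A\,d\mu=\omega_3$, which holds for any closed hypersurface diffeomorphic to $S^3$ as the degree of the Gauss map. Combining this (after correcting $g\to\gE$) with the volume bound \eqref{eq:BY-nearly-round-volume} and a Minkowski-type identity $\int H\langle x,\nu\rangle=3\Vol$, with $\langle x,\nu\rangle\le C s_a$, should force $c s_a^{-1}\le \bar H_a\le Cs_a^{-1}$. The main obstacle I expect is this step: pinning $\bar H_a$ both above and below using only the crude size bounds. I would first try the Euclidean Heintze--Karcher/Minkowski inequalities on $\Sigma_a$ viewed in $\R^4$, since the enclosed region contains $B_{\rho_a}$. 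A fallback is to bound $\bar H_a$ below via the degree identity, $\omega_3\approx \int (\bar H_a/3)^3 \le C\bar H_a^3 s_a^3$, and above via the inradius: the point of $\Sigma_a$ nearest the origin touches $\Sph_{\rho_a}$ from outside, so some Euclidean principal curvature is at most $1/\rho_a$, and near-umbilicity then controls $H$ there. Once both bounds hold, all $g$-principal curvatures lie in $[c s_a^{-1},Cs_a^{-1}]$.

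\emph{Transfer to Euclidean curvatures.} Because $\Sigma_a\subset\{|x|\ge\rho_a\}$, the comparison formulas already used in the proof of Proposition~\ref{prop:BY-uniform-quadratic-remainder} give $|K_a^g-K_a^E|\le C\rho_a^{-1-q}$ and $|\sigma_a-\gamma_a|\le C\rho_a^{-q}$. Hence the Euclidean principal curvatures satisfy $\kappa_i^E=\kappa_i^g+\mathcal O(\rho_a^{-1-q})$, and \eqref{eq:large-hypersurface-curvature-control} holds for large $a$ with slightly adjusted constants, using $s_a\sim\rho_a$. Connectedness and closedness of $\Sigma_a$ and the enclosing of $B_R$ are given by hypothesis, which completes the verification.
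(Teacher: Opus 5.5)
Your proposal is correct. The radius control, the Codazzi/oscillation estimate, and the final transfer of principal curvatures from $g$ to $\gE$ match the paper. The real difference is how you fix the size of the almost-constant mean curvature.

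The paper uses two touching-sphere arguments at the extremal points of $\varrho=|x|$ on $\Sigma_a$. At the nearest point $p_a^-$ the restricted Hessian of $\varrho$ is nonnegative, which gives $H_a^g(p_a^-)\le C\rho_a^{-1}$. At the farthest point $p_a^+$ it is nonpositive, which gives $H_a^g(p_a^+)\ge cR_a^{-1}\ge c\rho_a^{-1}$ by the radius control. The oscillation bound then spreads both estimates over $\Sigma_a$.

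\textbf{Your inradius/degree fallback.} Your upper bound is exactly the nearest-point half of this. In fact \emph{all} principal curvatures there are at most $\rho_a^{-1}$, so you do not need near-umbilicity at that point. For the lower bound you replace the farthest-point comparison by the degree identity $\int_{\Sigma_a}\det A_a^E\,d\mu_E=\omega_3$ together with condition (iv). This closes, but note that the nearest-point bound does not bound $|\bar H_a|$. So you must transfer near-umbilicity to $\gE$ with a relative error, $|\mathring K_a^E|\le C|\mathring K_a^g|+C\rho_a^{-q}|K_a^g|+C\rho_a^{-1-q}$. With $\eta=\min\{q,\tau\}$ this yields $\omega_3\le Cs_a^3\bigl(\max\{\bar H_a,0\}^3+s_a^{-3-\eta}\bigr)$, which forces $\bar H_a\ge cs_a^{-1}$.

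\textbf{Your Minkowski route.} This also closes and gives both bounds at once. From $\int_{\Sigma_a}H_a^E\langle x,\nu_E\rangle\,d\mu_E=3|\Sigma_a|_E$ and $\int_{\Sigma_a}\langle x,\nu_E\rangle\,d\mu_E=4|\Omega_a^E|$ you get $\bar H_a=\tfrac{3|\Sigma_a|_E}{4|\Omega_a^E|}\bigl(1+\mathcal O(s_a^{-\eta})\bigr)$. Then $B_{\rho_a}\subset\Omega_a^E$ together with (iv) gives the upper bound. The lower bound comes from $4|\Omega_a^E|\le Cs_a|\Sigma_a|_E$, which follows from $|\langle x,\nu_E\rangle|\le Cs_a$; that pointwise bound is what you had in mind, but it serves the lower bound, not the upper one.

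\textbf{Comparison.} The paper's argument is more economical: it is purely pointwise and uses only the outer-radius bound. Your integral identities instead need global hypotheses. They use the volume bound (iv), and the degree identity also uses the topology of $\Sigma_a$: the Gauss map has degree one because the enclosed region is a homology ball.
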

	
	\begin{proof}
		Let $\varrho(x):=|x|$ and $R_a:=\sup_{x\in\Sigma_a}\varrho(x)$, then $\rho_a=\inf_{x\in\Sigma_a}\varrho(x)$. Here $\varrho$ is the Euclidean radial function in the fixed
		asymptotically flat chart.
		
		We first compare the two radial functions $\varrho(x)$ and $d_g(p,x)$. Since
		$g-\gE=\mathcal O_2(|x|^{-q})$ with $q>1$, the length of a Euclidean radial
		ray computed using $g$ differs from its Euclidean length by a
		uniformly bounded amount. Together with the fact that $p$ is fixed,
		this gives
		\begin{align}\label{eq:two-radial-compare}
			\left|
			d_g(p,x)-\varrho(x)
			\right|
			\leq
			C
		\end{align}
		for all $x$ sufficiently far out in the end. Taking the infimum over
		$\Sigma_a$, we obtain
		\begin{align}\label{eq:BY-nearly-round-scale-comparison}
			|s_a-\rho_a|
			\leq
			C.
		\end{align}
		In particular, $s_a$ and $\rho_a$ are uniformly comparable for all sufficiently
		large $a$.
		
		Taking the supremum in \eqref{eq:two-radial-compare} and using the outer-radius condition \eqref{eq:BY-nearly-round-radius} in Definition~\ref{def:BY-AF-nearly-round}, we also get
		\begin{align*}
			R_a
			\leq
			\sup_{x\in\Sigma_a}d_g(p,x)+C
			\leq
			Cs_a+C
			\leq
			C\rho_a.
		\end{align*}
		Thus condition \textup{(1)} of
		Assumption~\ref{ass:large-hypersurfaces} holds.
		
		We next prove the curvature estimate in condition \textup{(2)} of Assumption~\ref{ass:large-hypersurfaces}. Set $\eta:=\min\{q,\tau\}$. The contracted Codazzi equation on $\Sigma_a$ gives
		\begin{align*}
			\nabla^{\sigma_a}H_a^g
			=
			\frac{3}{2}
			\left(
			\operatorname{div}_{\sigma_a}\mathring K_a^g
			-
			\Ric_g(\nu_g,\cdot)
			\right),
		\end{align*}
		Hence, by near roundness and asymptotic flatness \eqref{eq:BY-nearly-round-tracefree} and curvature decay on AF end,
		\begin{align}\label{eq:BY-nearly-round-gradient-H}
			\left|
			\nabla^{\sigma_a}H_a^g
			\right|_{\sigma_a}
			\leq
			C\rho_a^{-2-\eta}.
		\end{align}
		Indeed, the trace-free term is controlled by
		$s_a^{-2-\tau}$, while $|\Ric_g|=O(\rho_a^{-2-q})$. The diameter bound \eqref{eq:BY-nearly-round-diameter} in Definition~\ref{def:BY-AF-nearly-round}
		together with \eqref{eq:BY-nearly-round-scale-comparison} gives
		$\operatorname{diam}_{\sigma_a}(\Sigma_a)\leq C\rho_a$. Therefore
		\eqref{eq:BY-nearly-round-gradient-H} implies
		\begin{align}\label{eq:BY-nearly-round-H-oscillation}
			\operatorname{osc}_{\Sigma_a}H_a^g
			\leq
			C\rho_a^{-1-\eta}.
		\end{align}
		
		We now obtain the scale of $H_a^g$. Let $\Omega_a^E$ be the
		bounded Euclidean region enclosed by $\Sigma_a$. Since $\Sigma_a$
		encloses $B_R$ and $\rho_a=\inf_{\Sigma_a}|x|$, one has $B_{\rho_a}\subset\Omega_a^E\subset B_{R_a}$. Choose points $p_a^-,p_a^+\in\Sigma_a$ such that
		$\varrho(p_a^-)=\rho_a$ and $\varrho(p_a^+)=R_a$. At both points, the Euclidean outward unit normal to $\Sigma_a$ is
		$\partial_\varrho$. Since $g-\gE=O(\rho_a^{-q})$, the corresponding
		$g$-unit outward normal satisfies
		\begin{align*}
			\nu_g=\partial_\varrho+\mathcal O(\rho_a^{-q}),
			\qquad
			\nu_g(\varrho)=1+\mathcal O(\rho_a^{-q}).
		\end{align*}
		
		For tangent vectors $X,Y\in T\Sigma_a$, with the convention that
		large Euclidean spheres have positive second fundamental form, the restriction formula is
		\begin{align}\label{eq:BY-nearly-round-Hessian-restriction}
			\nabla_{\Sigma_a}^2\varrho(X,Y)
			= 
			\nabla_g^2\varrho(X,Y)
			-
			\nu_g(\varrho)K_a^g(X,Y).
		\end{align}
		Moreover,
		\begin{align}\label{eq:BY-nearly-round-radial-Hessian}
			\nabla_g^2\varrho
			=
			\frac1{\varrho}
			\bigl(g-d\varrho\otimes d\varrho\bigr)
			+
			O(\varrho^{-1-q})
		\end{align}
		in the asymptotically flat end.
		
		At $p_a^-$, the function $\varrho|_{\Sigma_a}$ has a minimum.
		Hence $\nabla_{\Sigma_a}^2\varrho\geq0$ there. Taking the trace of
		\eqref{eq:BY-nearly-round-Hessian-restriction} and using
		\eqref{eq:BY-nearly-round-radial-Hessian}, we obtain
		\begin{align*}
			H_a^g(p_a^-)
			\leq
			\frac{C}{\rho_a}.
		\end{align*}
		At $p_a^+$, the function $\varrho|_{\Sigma_a}$ has a maximum.
		Hence $\nabla_{\Sigma_a}^2\varrho\leq0$ there, and similarly
		\begin{align*}
			H_a^g(p_a^+)
			\geq
			\frac{c}{R_a}
			\geq
			\frac{c}{\rho_a}.
		\end{align*}
		Combining these estimates with \eqref{eq:BY-nearly-round-H-oscillation}, we
		get
		\begin{align}\label{eq:BY-nearly-round-H-scale}
			\frac{c}{\rho_a}
			\leq
			H_a^g
			\leq
			\frac{C}{\rho_a}
		\end{align}
		on $\Sigma_a$ for all sufficiently large $a$. 
		
		We are now in a position to prove condition \textup{(2)} of Assumption~\ref{ass:large-hypersurfaces}. The trace-free estimate \eqref{eq:BY-nearly-round-tracefree} in the nearly round condition, together with \eqref{eq:BY-nearly-round-gradient-H} and
		\eqref{eq:BY-nearly-round-H-scale} imply
		\begin{align}\label{eq:BY-nearly-round-physical-K-estimates}
			|K_a^g|_{\sigma_a}
			\leq
			C\rho_a^{-1},
			\qquad
			\left|
			\nabla^{\sigma_a}K_a^g
			\right|_{\sigma_a}
			\leq
			C\rho_a^{-2-\eta}.
		\end{align}
		With this estimate at hand, we can compare the physical and Euclidean second fundamental forms. Since $g - \gE = \mathcal{O}_2(|x|^{-q})$, the standard comparison of unit normals and Levi-Civita connections, combined with the estimate \eqref{eq:BY-nearly-round-physical-K-estimates} obtained above, yields
		\begin{align}\label{eq:BY-nearly-round-K-comparison}
			\begin{split}
				\left|
				K_a^E-K_a^g
				\right|_{\gamma_a}
				&\leq
				C\rho_a^{-1-q},
				\\
				\left|
				\nabla^{\gamma_a}K_a^E
				-
				\nabla^{\sigma_a}K_a^g
				\right|_{\gamma_a}
				&\leq
				C\rho_a^{-2-q}.
			\end{split}
		\end{align}
		It follows from
		\eqref{eq:BY-nearly-round-physical-K-estimates} and
		\eqref{eq:BY-nearly-round-K-comparison} that
		\begin{align}
			\left|
			\mathring K_a^E
			\right|_{\gamma_a}
			+
			\rho_a
			\left|
			\nabla^{\gamma_a}\mathring K_a^E
			\right|_{\gamma_a}
			\leq
			C\rho_a^{-1-\eta}.
		\end{align}
		Taking the trace in the comparison between $K_a^E$ and $K_a^g$, and using the previously obtained estimate \eqref{eq:BY-nearly-round-H-scale}, we obtain
		\begin{align}\label{eq:mean-curvature-bound}
			\frac{c}{\rho_a}
			\leq
			H_a^E
			\leq
			\frac{C}{\rho_a}
		\end{align}
		for all sufficiently large $a$, after possibly changing the constants $c$ and $C$. If $\lambda_{a,i}$ are the eigenvalues of
		$\mathring K_a^E$, then
		\begin{align*}
			\kappa_i^E
			=
			\frac{H_a^E}{3}
			+
			\lambda_{a,i},
			\qquad
			|\lambda_{a,i}|
			\leq
			C\rho_a^{-1-\eta}.
		\end{align*}
		Since $\eta>0$, the error term is lower order compared with
		$\rho_a^{-1}$. Hence, for all sufficiently large $a$,
		\begin{align*}
			\frac{c}{\rho_a}
			\leq
			\kappa_i^E
			\leq
			\frac{C}{\rho_a},
			\qquad
			i=1,2,3.
		\end{align*}
		This is condition \textup{(2)} of
		Assumption~\ref{ass:large-hypersurfaces}.
	\end{proof}
	
	We now estimate the correction term in
	Theorem~\ref{thm:BY-AF-large-boundary}. The following estimate is the
	main step.
	
	\begin{proposition}\label{prop:BY-nearly-round-correction}
		Let $\{\Sigma_a\}$ be a nearly round family of rate $\tau>0$.
		Assume, in addition, that, for all sufficiently large $a$, the
		hypersurface $\Sigma_a$ lies in the fixed asymptotically flat chart and
		encloses $B_R$. Set $\eta:=\min\{q,\tau\}$, where $q$ is the decay order
		of the asymptotically flat metric, and let $h:=g-\gE$ in the fixed
		asymptotically flat chart. Then there exists a constant $C$, independent
		of $a$, such that
		\begin{align}
			\label{eq:BY-nearly-round-correction-estimate}
			\left|
			\int_{\Sigma_a}
			\mathfrak D_a^{\alpha\beta}
			h_{\alpha\beta}\,d\mu_E
			\right|
			&\leq
			C\rho_a^{2-q-\eta}
		\end{align}
		for all sufficiently large $a$.
	\end{proposition}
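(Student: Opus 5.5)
The plan is to split $\mathfrak D_a$ into its second-order differential part $\mathcal L_{\gamma_a}(P)$ and its zeroth-order algebraic part
\begin{align*}
\mathfrak D_a^{\mathrm{alg}}:=-\tfrac12 p\,\Ric_{\gamma_a}+P\ast R(\gamma_a)+\tfrac12 A_0 .
\end{align*}
The differential part will be handled by integrating by parts once onto $h$. The algebraic part will be handled by comparison with a \emph{pointwise} umbilic model, for which it vanishes exactly as in the proof of Corollary~\ref{cor:BY-AF-coordinate-spheres}.

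\textbf{Inputs.} By Lemma~\ref{lem:BY-nearly-round-geometric-consequences}, Assumption~\ref{ass:large-hypersurfaces} holds, and its proof gives
\begin{align*}
|\mathring K_a^E|_{\gamma_a}+\rho_a|\nabla^{\gamma_a}\mathring K_a^E|_{\gamma_a}&\leq C\rho_a^{-1-\eta},\\
c\rho_a^{-1}\leq H_a^E&\leq C\rho_a^{-1},\\
|\nabla^{\gamma_a}K_a^E|&\leq C\rho_a^{-2-\eta}.
\end{align*}
For the last bound, combine \eqref{eq:BY-nearly-round-physical-K-estimates} with \eqref{eq:BY-nearly-round-K-comparison}; this also gives $|\nabla H_a^E|\leq C\rho_a^{-2-\eta}$. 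The proof of Proposition~\ref{prop:BY-uniform-quadratic-remainder} gives $\Vol_{\gamma_a}(\Sigma_a)\leq C\rho_a^3$.

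By the Gauss equation in $\R^4$, $\Ric_{\gamma_a}$, $R(\gamma_a)$ and $B=\tfrac12\Scal\,\Id-\Ric^\sharp$ are universal polynomials in $K_a^E$. Moreover $A_0=A_a^E$, and $B$ and $A_0$ are simultaneously diagonal with $B=\diag(\kappa_2\kappa_3,\kappa_1\kappa_3,\kappa_1\kappa_2)$. Since all $\kappa_i\sim\rho_a^{-1}$, the tensor $P=\tfrac12H_0B^{-1}-B^{-1}A_0$ is a smooth function of $K_a^E$ on a compact region of the positive cone after rescaling. Hence
\begin{align*}
|P|\leq C\rho_a,\qquad |\nabla P|+|\nabla p|\leq C\rho_a^{2}\,|\nabla K_a^E|\leq C\rho_a^{-\eta}.
\end{align*}

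\textbf{Algebraic part.} Put $\lambda:=H_a^E/3$, a function on $\Sigma_a$, and form the model quantities $A_0^{\mathrm{mod}}=\lambda\,\Id$, $B^{\mathrm{mod}}=\lambda^2\Id$, $P^{\mathrm{mod}}=\tfrac1{2\lambda}\Id$, together with the constant-curvature-$\lambda^2$ tensors $\Ric^{\mathrm{mod}}$ and $R^{\mathrm{mod}}$. The computation in Corollary~\ref{cor:BY-AF-coordinate-spheres} is purely pointwise and algebraic (replace $1/r$ by $\lambda$). It shows that the model value of $\mathfrak D_a^{\mathrm{alg}}$ is
\begin{align*}
-\tfrac12\lambda+\lambda-\tfrac34\cdot 2\lambda\cdot\tfrac13+\ldots=0,
\end{align*}
that is, $-\tfrac{3}{2\lambda}\lambda^2+\tfrac1{2\lambda}\cdot2\lambda^2+\tfrac12\lambda=0$ times $\gamma_a^{\alpha\beta}$. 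Every algebraic ingredient is a smooth function of $K_a^E$, and $K_a^E-\lambda\gamma_a=\mathring K_a^E$. Homogeneity therefore gives $|\mathfrak D_a^{\mathrm{alg}}|\leq C\rho_a^{-1}\cdot\rho_a|\mathring K_a^E|\leq C\rho_a^{-1-\eta}$. Multiplying by $|h|\leq C\rho_a^{-q}$ and by the volume $C\rho_a^3$, this part contributes at most $C\rho_a^{2-q-\eta}$.

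\textbf{Differential part.} Every term of $\mathcal L_{\gamma_a}(P)$ has the form $\nabla\nabla(P\text{ or }p)$. Integrating by parts once on the closed manifold $\Sigma_a$ gives
\begin{align*}
\int\mathcal L_{\gamma_a}(P)^{\alpha\beta}h_{\alpha\beta}\,d\mu_E=\int(\nabla P,\nabla p)\ast\nabla^{\gamma_a}(h|_{\Sigma_a})\,d\mu_E .
\end{align*}
This uses only first derivatives of $K_a^E$, which is exactly what the nearly round condition controls. The intrinsic covariant derivative of the tangential restriction equals the tangential part of $\partial h$ plus terms of the form $K_a^E\ast h(\nu_E,\cdot)$. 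Hence $|\nabla^{\gamma_a}(h|_{\Sigma_a})|\leq C\rho_a^{-1-q}$. The contribution is then at most $C\rho_a^{-\eta}\rho_a^{-1-q}\rho_a^3=C\rho_a^{2-q-\eta}$. Adding the two parts yields \eqref{eq:BY-nearly-round-correction-estimate}.

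\textbf{Main obstacle.} The delicate step is the algebraic cancellation. I need to make sure that the pointwise model, with $\lambda$ non-constant, still kills the zeroth-order terms. The corollary's computation used parallelism only for the derivative terms, which are treated separately here, so this should hold. I also need to confirm that the deviation of $P$ and of the curvature tensors from the model is linear in $\mathring K_a^E$ with the stated scaling, rather than involving $\nabla H$. I would verify this in a frame diagonalizing $K_a^E$ by writing $\kappa_i=\lambda+\mu_i$ with $\sum\mu_i=0$.
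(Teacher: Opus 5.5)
Your proposal is correct, but it takes a different route from the paper's proof in two places.

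\textbf{How the paper argues.} The paper compares $A_a^E$ with a \emph{constant} umbilic model $\varkappa_a\Id$, where $3\varkappa_a$ is the average of $H_a^E$. This first requires $\operatorname{osc}_{\Sigma_a}H_a^E\leq C\rho_a^{-1-\eta}$, which comes from contracted Codazzi together with the diameter bound. The payoff is that $\bar P_a=\frac{1}{2\varkappa_a}\Id$ is parallel, so $\mathcal L_{\gamma_a}(\bar P_a)=0$ and one can write $\mathfrak D_a=\mathcal L_{\gamma_a}(P_a-\bar P_a)+\mathcal A_a$. The paper then integrates by parts twice. It pairs the $C^0$ bound $|P_a-\bar P_a|\leq C\rho_a^{1-\eta}$ with $|\nabla^2 h|\leq C\rho_a^{-2-q}$, and it bounds $\mathcal A_a$ by comparison with the constant-curvature model at $\varkappa_a$.

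\textbf{How you argue.} For the algebraic part you use the pointwise model $\lambda=H_a^E/3$. This is legitimate because, by the flat Gauss equation, that part is a smooth function of $K_a^E$ alone. It is homogeneous of degree one and vanishes on umbilic tensors. Your check $-\tfrac32\lambda+\lambda+\tfrac12\lambda=0$ is the right one; the first displayed arithmetic line is garbled and should be deleted. The mean-value step is fine because the segment from $\lambda\gamma_a$ to $K_a^E$ stays in the convex set where all eigenvalues are comparable to $\rho_a^{-1}$; it is worth saying this explicitly.

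For the differential part you integrate by parts only once. You use $|\nabla P|+|\nabla p|\leq C\rho_a^{2}|\nabla K_a^E|\leq C\rho_a^{-\eta}$ and $|\nabla^{\gamma_a}(h|_{\Sigma_a})|\leq |\partial h|+C|K_a^E||h|\leq C\rho_a^{-1-q}$.

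\textbf{What each approach buys.} Both routes spend the same input, namely first-derivative control of $\mathring K_a^E$, and both give the rate $\rho_a^{2-q-\eta}$.
\begin{itemize}
\item The paper converts that control into $C^0$ closeness to a constant model, via the oscillation of $H_a^E$ over a set of diameter $\lesssim\rho_a$. It then uses $\partial^2 h$.
\item You use that control directly as $|\nabla K_a^E|\leq C\rho_a^{-2-\eta}$, including $\nabla H_a^E$ via Codazzi. This lets you avoid the averaging/oscillation step and the constant model inside the proof, and you need only first derivatives of $h$.
\item Your pointwise model also makes the algebraic cancellation transparent. The paper's version needs the extra comparison $|H_a^E-3\varkappa_a|\leq C\rho_a^{-1-\eta}$ to control the curvature terms.
\end{itemize}
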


	\begin{proof}
		By Lemma~\ref{lem:BY-nearly-round-geometric-consequences}, after
		discarding finitely many terms, the family satisfies Assumption~\ref{ass:large-hypersurfaces}. In particular,
		\begin{align}
			\label{eq:BY-nearly-round-Euclidean-umbilic-control}
			\left|\mathring K_a^E\right|_{\gamma_a}
			+
			\rho_a
			\left|\nabla^{\gamma_a}\mathring K_a^E\right|_{\gamma_a}
			\leq
			C\rho_a^{-1-\eta},
			\qquad
			\Vol_{\gE}(\Sigma_a)\leq C\rho_a^3,
		\end{align}
		and
		\begin{align}
			c\rho_a^{-1}\leq H_a^E\leq C\rho_a^{-1}.
		\end{align}
		For the Euclidean shape operator $A_a^E$ and mean curvature $H^E_a$, define
		\begin{align*}
			\varkappa_a
			:=
			\frac{1}{3\Vol_{\gE}(\Sigma_a)}
			\int_{\Sigma_a}H_a^E\,d\mu_E,
		\end{align*}
		then \eqref{eq:mean-curvature-bound} implies $c\rho_a^{-1}\leq \varkappa_a\leq C\rho_a^{-1}$. The Euclidean contracted Codazzi equation gives
		\begin{align*}
			\nabla^{\gamma_a}H_a^E
			=
			\frac32\operatorname{div}_{\gamma_a}\mathring K_a^E .
		\end{align*}
		Hence \eqref{eq:BY-nearly-round-Euclidean-umbilic-control}, together
		with the diameter bound \eqref{eq:BY-nearly-round-diameter}, implies
		\begin{align*}
			\left|\nabla^{\gamma_a}H_a^E\right|_{\gamma_a}
			\leq C\rho_a^{-2-\eta},
			\qquad
			\operatorname{osc}_{\Sigma_a}H_a^E
			\leq C\rho_a^{-1-\eta}.
		\end{align*}
		Since $3\varkappa_a$ is the average of $H_a^E$, we obtain
		\begin{align*}
			|H_a^E-3\varkappa_a|\leq C\rho_a^{-1-\eta},
		\end{align*}
		thus
		\begin{align}\label{eq:BY-nearly-round-A-close-umbilic}
			\begin{split}
				\left|A_a^E-\varkappa_a\Id\right|_{\gamma_a}
				&\leq
				\left|\mathring A_a^E\right|_{\gamma_a}
				+
				\frac13
				\left|H_a^E-3\varkappa_a\right|
				\left|\Id\right|_{\gamma_a}  \\
				&\leq
				C\rho_a^{-1-\eta}
				+
				C\rho_a^{-1-\eta}.
			\end{split}
		\end{align}
		
		Recall that on a Euclidean hypersurface
		\begin{align*}
			P_a
			=
			\frac{1}{J_a^E}
			\left(
			\frac12 H_a^E A_a^E-(A_a^E)^2
			\right),
			\qquad
			J_a^E=\det A_a^E .
		\end{align*}
		For the umbilic model $\bar A_a=\varkappa_a\Id$, the
		corresponding tensor is
		\begin{align*}
			\bar P_a=\frac{1}{2\varkappa_a}\operatorname{Id},
			\qquad
			\bar p_a=\tr_{\gamma_a}\bar P_a
			=
			\frac{3}{2\varkappa_a}.
		\end{align*}
		The map
		\begin{align*}
			A\longmapsto
			\frac{1}{\det A}
			\left(
			\frac12(\operatorname{tr}A)A-A^2
			\right)
		\end{align*}
		is smooth on the open set of positive definite self-adjoint endomorphisms. Using
		\eqref{eq:BY-nearly-round-A-close-umbilic} and the fact $c\rho_a^{-1}\leq \varkappa_a\leq C\rho_a^{-1}$, we arrive at
		\begin{align}
			\label{eq:BY-nearly-round-P-close-model}
			|P_a-\bar P_a|_{\gamma_a}
			+
			|p_a-\bar p_a|
			\leq
			C\rho_a^{1-\eta}.
		\end{align}
		Moreover, the Euclidean Gauss equation and
		\eqref{eq:BY-nearly-round-A-close-umbilic} imply
		\begin{align}
			\label{eq:BY-nearly-round-curvature-close-model}
			\left|
			\Rm_{\gamma_a}
			-
			\varkappa_a^2(\gamma_a\owedge\gamma_a)
			\right|_{\gamma_a}
			+
			\left|
			\Ric_{\gamma_a}
			-
			2\varkappa_a^2\gamma_a
			\right|_{\gamma_a}
			\leq
			C\rho_a^{-2-\eta},
		\end{align}
		where the normalization of $\gamma_a\owedge\gamma_a$ is chosen so that
		the model metric has sectional curvature $\varkappa_a^2$.
		
		Since $\bar P_a$ is parallel with respect to $\gamma_a$, the differential
		part of $\mathcal E_{\gamma_a}(\bar P_a)$ vanishes. For the umbilic model
		$\bar A_a=\varkappa_a\Id$ and the constant-curvature metric of sectional
		curvature $\varkappa_a^2$, one has
		\begin{align*}
			\mathcal E(\bar P_a)+\frac12\bar A_a=0.
		\end{align*}
		We decompose $\mathfrak D_a$ by separating the differential part from the
		remaining algebraic part:
		\begin{align*}
			\mathfrak D_a
			&=
			\mathcal L_{\gamma_a}(P_a-\bar P_a)
			+
			\mathcal A_a,
		\end{align*}
		where
		\begin{align*}
			\mathcal A_a^{\alpha\beta}
			:=
			-\frac12 p_a\Ric_{\gamma_a}^{\alpha\beta}
			+
			P_a^{\gamma\delta}
			R(\gamma_a)_{\gamma}{}^{\alpha}{}_{\delta}{}^{\beta}
			+
			\frac12(A_a^E)^{\alpha\beta}.
		\end{align*}
		Thus \eqref{eq:BY-nearly-round-A-close-umbilic},
		\eqref{eq:BY-nearly-round-P-close-model}, and
		\eqref{eq:BY-nearly-round-curvature-close-model} show that the algebraic
		part satisfies
		\begin{align*}
			|\mathcal A_a|_{\gamma_a}
			\leq
			C\rho_a^{-1-\eta}.
		\end{align*}
		
		It remains to estimate the term involving
		$\mathcal L_{\gamma_a}(P_a-\bar P_a)$. Since $\Sigma_a$ is closed, we may
		integrate by parts twice in the definition of $\mathcal L_{\gamma_a}$. Therefore,
		using \eqref{eq:BY-nearly-round-P-close-model} and volume estimate in \eqref{eq:BY-nearly-round-Euclidean-umbilic-control},
		\begin{align*}
			\left|
			\int_{\Sigma_a}
			\mathcal L_{\gamma_a}(P_a-\bar P_a)^{\alpha\beta}
			h_{\alpha\beta}\,d\mu_E
			\right|
			&
			\leq
			C\int_{\Sigma_a}
			|P_a-\bar P_a|_{\gamma_a}
			|(\nabla^{\gamma_a})^2h|_{\gamma_a}\,d\mu_E\\
			&
			\leq
			C\rho_a^{1-\eta}\rho_a^{-q-2}\Vol_{\gE}(\Sigma_a)
			\leq
			C\rho_a^{2-q-\eta}.
		\end{align*}
		Meanwhile, the algebraic part is estimated similarly:
		\begin{align*}
			\left|
			\int_{\Sigma_a}
			\mathcal A_a^{\alpha\beta}
			h_{\alpha\beta}\,d\mu_E
			\right|
			\leq
			C\rho_a^{-1-\eta}\rho_a^{-q}\Vol_{\gE}(\Sigma_a)
			\leq
			C\rho_a^{2-q-\eta}.
		\end{align*}
		Combining the last two estimates proves
		\eqref{eq:BY-nearly-round-correction-estimate}.
	\end{proof}
	
	Combining Theorem~\ref{thm:BY-AF-large-boundary} and Proposition~\ref{prop:BY-nearly-round-correction}, we establish our second main theorem, Theorem~\ref{thm:BY-AF-nearly-round}.
	
	\medskip
	
	We finish with a concrete class of nearly round hypersurfaces. The study of canonical constant-mean-curvature foliations at infinity was
	initiated, in the asymptotically flat setting, by Huisken--Yau
	\cite{HuiskenYau}, who constructed a unique foliation by stable CMC spheres near infinity for asymptotically flat three-manifolds of positive mass.
	Although their result concerns three-dimensional ambient manifolds, it
	provides the basic model for using geometrically distinguished CMC leaves as
	an exhaustion of an asymptotically flat end.  In four-dimensional Ricci-flat
	ALE geometry, Biquard--Hein use the canonical CMC foliation as a natural
	exhaustion in their study of renormalized volume \cite{BH}.  Thus, whenever a
	CMC foliation is available in the present setting and its leaves are
	asymptotically small normal graphs over coordinate spheres, it is natural to
	ask whether the Brown--York type mass converges to the ADM mass along those
	leaves.  The following corollary gives a concrete sufficient condition for
	this conclusion.

	\begin{corollary}
		Let $(M^4,g)$ satisfy the hypotheses of
		Theorem~\ref{thm:BY-AF-large-boundary}, and let $q>1$ be the
		asymptotic decay order in the fixed asymptotically flat chart.
		Let $r_a\to\infty$, and suppose that, for all sufficiently large $a$,
		\begin{align*}
			\Sigma_a
			&=
			\left\{
			r_a\bigl(1+\varphi_a(\omega)\bigr)\omega
			\;:\;
			\omega\in \Sph^3
			\right\}.
		\end{align*}
		where $\varphi_a\in C^\infty(\Sph^3)$ satisfies, for some $\beta>0$
		with $\beta>2-q$,
		\begin{align*}
			\|\varphi_a\|_{C^3(\Sph^3)}
			\leq
			Cr_a^{-\beta}.
		\end{align*}
		Then, after discarding finitely many terms, the family
		$\{\Sigma_a\}$ satisfies Assumption~\ref{ass:large-hypersurfaces} and
		is nearly round of rate $\eta=\min\{q,\beta\}$. Moreover,
		\begin{align*}
			\lim_{a\to\infty}m_{\BY}(\Sigma_a)
			=
			m_{\ADM}(g).
		\end{align*}
	\end{corollary}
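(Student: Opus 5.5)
The plan is to compute the Euclidean geometry of the radial graph $\Sigma_a$ explicitly at scale $r_a$. After that, I would either verify the nearly round conditions of Definition~\ref{def:BY-AF-nearly-round} and invoke Theorem~\ref{thm:BY-AF-nearly-round}, or, more directly, check Assumption~\ref{ass:large-hypersurfaces} and bound the correction term of Theorem~\ref{thm:BY-AF-large-boundary} by the argument of Proposition~\ref{prop:BY-nearly-round-correction}.

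\textbf{Step 1: the rescaled Euclidean picture.} Rescale by $r_a^{-1}$. The hypersurface $r_a^{-1}\Sigma_a$ is the radial graph of $1+\varphi_a$ over $\Sph^3$, with $\|\varphi_a\|_{C^3}\le Cr_a^{-\beta}$. The standard formulas for the induced metric and second fundamental form of a radial graph then give the following bounds.
\begin{itemize}
\item The rescaled induced metric is $\sigma_{\Sph^3}+\mathcal O_{C^2}(r_a^{-\beta})$.
\item The rescaled shape operator is $\Id+\mathcal O_{C^1}(r_a^{-\beta})$. It involves up to two derivatives of $\varphi_a$, so its first covariant derivative uses the $C^3$ bound.
\end{itemize}
Scaling back, the Euclidean quantities satisfy
\begin{align*}
\rho_a&=r_a\bigl(1+\mathcal O(r_a^{-\beta})\bigr),\qquad \sup_{\Sigma_a}|x|\le 2r_a,\qquad \kappa_i^E=r_a^{-1}\bigl(1+\mathcal O(r_a^{-\beta})\bigr),\\
|\mathring K_a^E|&+r_a|\nabla\mathring K_a^E|\le Cr_a^{-1-\beta}.
\end{align*}
Hence Assumption~\ref{ass:large-hypersurfaces} holds for large $a$. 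The hypersurface encloses $B_R$ because it is a graph over $\Sph_{r_a}$ with $r_a\to\infty$.

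\textbf{Step 2: the physical quantities.} I pass to $g$ using the comparison estimates already used in Lemma~\ref{lem:BY-nearly-round-geometric-consequences}. These give $|K_a^g-K_a^E|\le C\rho_a^{-1-q}$ and a derivative difference bounded by $C\rho_a^{-2-q}$. Consequently
\[
|\mathring K_a^g|+s_a|\nabla\mathring K_a^g|\le Cs_a^{-1-\eta},\qquad \eta=\min\{q,\beta\},
\]
using $|s_a-\rho_a|\le C$ from \eqref{eq:two-radial-compare}. The radius, diameter and volume bounds follow from the $C^1$ closeness of the rescaled metric to the round one. So the family is nearly round of rate $\eta$.

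\textbf{Step 3: conclusion.} Since $q>1$ and $\beta>2-q$, we have $q+\eta=\min\{2q,q+\beta\}>2$. Theorem~\ref{thm:BY-AF-nearly-round} then yields the limit. Equivalently, Proposition~\ref{prop:BY-nearly-round-correction} bounds the correction term by $C\rho_a^{2-q-\eta}\to0$, and criterion \eqref{eq:BY-AF-main-criterion} applies.

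\textbf{Main obstacle.} The only genuinely computational step is Step 1. One must check that the $C^3$ norm of $\varphi_a$ controls $\nabla\mathring K^E$ with the full rate $r_a^{-\beta}$, and not something weaker coming from nonlinear terms. I would handle this by writing $\mathring K$ of the rescaled graph as a smooth function of $(\varphi,\nabla\varphi,\nabla^2\varphi)$ that vanishes when $\varphi$ is constant. Its derivative is then linear in $\nabla\varphi,\nabla^2\varphi,\nabla^3\varphi$, with coefficients that are uniformly bounded when $\|\varphi\|_{C^2}$ is small.
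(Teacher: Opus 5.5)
Your proposal is correct and follows essentially the same route as the paper. The radial-graph formulas at scale $r_a$ give Assumption~\ref{ass:large-hypersurfaces} and the bound $|\mathring K_a^E|+r_a|\nabla\mathring K_a^E|\le Cr_a^{-1-\beta}$; comparison with $g$ upgrades this to near-roundness of rate $\eta=\min\{q,\beta\}$, using $s_a\sim r_a$ from $d_g(p,x)=|x|+\mathcal O(1)$; and since $q+\eta>2$ you may invoke Theorem~\ref{thm:BY-AF-nearly-round}. Your observation that $\mathring K$ of the rescaled graph is a smooth function of $(\varphi_a,\nabla\varphi_a,\nabla^2\varphi_a)$ vanishing on constants just supplies the detail that the paper leaves to the standard radial-graph formulas.
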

	
	\begin{proof}
		Set $\eta := \min\{q, \beta\}$. We write the map $F_a:\Sph^3 \to \Sigma_a$ in the form $F_a(\omega) = r_a(1 + \varphi_a(\omega))\omega$. Since the $C^3$ norm of $\varphi_a$ converges to zero, we may discard finitely many terms and assume $1 + \varphi_a > 0$. Hence, $\Sigma_a$ is a smooth embedded radial graph. Moreover, both $\rho_a$ and $\sup_{\Sigma_a} |x|$ are equal to $r_a$ up to a relative error of $O(r_a^{-\beta})$. This implies $\rho_a \sim r_a$. In particular, the tail of the family lies in the fixed asymptotically flat chart and encloses $B_R$.
		
		The standard Euclidean radial-graph formulas give
		\begin{align*}
			r_a^{-2}F_a^*\gamma_a
			&=
			g_{\Sph^3}+\mathcal O_{C^2}(r_a^{-\beta}),\\
			r_a\,F_a^*A_a^E
			&=
			\Id+\mathcal O_{C^1}(r_a^{-\beta}).
		\end{align*}
		Thus
		\begin{align*}
			\kappa_i^E=r_a^{-1}(1+O(r_a^{-\beta}))
		\end{align*}
		for $i=1,2,3$. Since
		$\rho_a\sim r_a$, the Euclidean principal curvatures are uniformly comparable to
		$\rho_a^{-1}$. Therefore $\{\Sigma_a\}$ satisfies
		Assumption~\ref{ass:large-hypersurfaces}.
		
		The same radial-graph estimates imply
		\begin{align*}
			\left|\mathring K_a^E\right|_{\gamma_a}
			+
			r_a\left|\nabla^{\gamma_a}\mathring K_a^E\right|_{\gamma_a}
			\le
			Cr_a^{-1-\beta}.
		\end{align*}
		On the other hand, since $g-\gE=\mathcal O_2(|x|^{-q})$ and $\rho_a\sim r_a$, the induced
		metrics $\sigma_a$ and $\gamma_a$ are uniformly equivalent, and the standard comparison
		of unit normals, connections, and second fundamental forms gives
		\begin{align*}
			\left|\mathring K_a^g\right|_{\sigma_a}
			+
			r_a\left|\nabla^{\sigma_a}\mathring K_a^g\right|_{\sigma_a}
			\le
			C\bigl(r_a^{-1-\beta}+r_a^{-1-q}\bigr)
			\le
			Cr_a^{-1-\eta}.
		\end{align*}
		
		Let $s_a:=\inf_{\Sigma_a}d_g(p,\cdot)$. Since $q>1$, one has
		$d_g(p,x)=|x|+\mathcal O(1)$ on the asymptotically flat end. Hence $s_a\sim r_a$. Combining this
		with the radial-graph description and the uniform equivalence of $\sigma_a$ and $\gamma_a$,
		we obtain
		\begin{align*}
			\sup_{\Sigma_a}d_g(p,\cdot)
			&\le
			Cs_a+C,\\
			\diam_{\sigma_a}(\Sigma_a)
			&\le
			Cs_a,\\
			\Vol_{\sigma_a}(\Sigma_a)
			&\le
			Cs_a^3.
		\end{align*}
		Together with the preceding trace-free second fundamental form estimate, this proves that
		$\{\Sigma_a\}$ is nearly round of rate $\eta$.
		
		Finally, it is clear that $q+\eta>2$, since $q>1$ and $\beta>2-q$. The conclusion therefore follows from
		Theorem~\ref{thm:BY-AF-nearly-round}.
	\end{proof}

\end{document}